\newcommand{\myuline}[1]{%
  \mspace{2mu}%
  \underline{\smash{\mspace{-2mu}#1\mspace{-2mu}}}%
  \mspace{2mu}%
}
\DeclareMathAlphabet{\mathpzc}{OT1}{pzc}{m}{it}
\newcommand{\cC}{\mathcal{C}}
\newcommand{\cD}{\mathcal{D}}
\newcommand{\cO}{\mathcal{O}}
\newcommand{\cCi}{\mathscr{C}}
\newcommand{\Set}{\mathbf{Set}}
\newcommand{\Cat}{\mathbf{Cat}}
\newcommand{\T}{\mathbb{T}}
\newcommand{\uHom}{\myuline{\mathsf{Hom}}}
\newcommand{\uMap}{\myuline{\mathpzc{Map}}}
\newcommand{\Map}{\mathpzc{Map}}
\newcommand{\Cocone}{\myuline{\mathsf{Cocone}}}
\newtheorem*{theorem*}{Theorem}
\newtheorem{theorem}{Theorem}[section]
\newtheorem{corollary}[theorem]{Corollary}
\newtheorem{lemma}[theorem]{Lemma}
\newtheorem{proposition}[theorem]{Proposition}
\theoremstyle{definition}
\newtheorem{construction}[theorem]{Construction}
\newtheorem{remark}[theorem]{Remark}
\newtheorem{definition}[theorem]{Definition}
        \titleclass{\subsubsubsection}{straight}[\subsection]
        \newcounter{subsubsubsection}[subsubsection]
        \renewcommand\thesubsubsubsection{\thesubsubsection.\arabic{subsubsubsection}}
        \renewcommand\paragraph{\@startsection{paragraph}{5}{\z@}%
          {3.25ex \@plus1ex \@minus.2ex}%
          {-1em}%
          {\normalfont\normalsize\bfseries}}
        \renewcommand\subparagraph{\@startsection{subparagraph}{6}{\parindent}%
          {3.25ex \@plus1ex \@minus .2ex}%
          {-1em}%
          {\normalfont\normalsize\bfseries}}
        \def\toclevel@subsubsubsection{4}
        \def\toclevel@paragraph{5}
        \def\toclevel@paragraph{6}
        \def\l@subsubsubsection{\@dottedtocline{4}{7em}{4em}}
        \def\l@paragraph{\@dottedtocline{5}{10em}{5em}}
        \def\l@subparagraph{\@dottedtocline{6}{14em}{6em}}
\setlist[description]{leftmargin=\parindent,labelindent=\parindent,rightmargin=\parindent}
\title{\vspace{-0.5cm} Elementary $\infty$-toposes from type theory}
\author{Maximilian Petrowitsch \\ \small Western University \\ \small \url{mpetrowi@uwo.ca}}
\author{Daniël Apol\\ \small University of Gothenburg \\ \small and Chalmers University of Technology \\ \small \url{daniel.apol@chalmers.se }\and Maximilian Petrowitsch\\\small Western University\\ \small \url{mpetrowi@uwo.ca}}
\date{\today}
\newif\ifworkinprogress
\begin{document}

\maketitle

\vspace{-1cm}

\begin{abstract}
We prove that every categorical model of dependent type theory with dependent sums and products, intensional identity types and univalent universes presents via its $\infty$-localisation an elementary $\infty$-topos, that is, a finitely complete, locally cartesian closed $\infty$-category with enough univalent universal morphisms. We also show that elementary $\infty$-toposes have small subobject classifiers and that the $\infty$-localisation admits finite colimits if the type theory has 0-types and pushout types. To achieve this, we extend Joyal's theory of tribes by introducing the notion of a univalent tribe and a univalent fibration in a tribe.

\end{abstract}

\section{Introduction}

Since its conception, it has been speculated (see e.g. \cite{shulman_internal_2012}) that Homotopy Type Theory (HoTT) \cite{the_univalent_foundations_program_homotopy_2013} is the internal language of particular higher categories also called \textit{elementary $\infty$-toposes}. A precise formulation of this statement was given by Kapulkin and Lumsdaine in \autocite[Conj. 3.7]{kapulkin_homotopy_2018} and is known as the \textit{internal language conjecture}. Here, HoTT is understood as Martin-Löf dependent type theory with dependent sums, dependent products, intensional identity types and univalent universes.

So far, several important steps have been made towards establishing such a connection between higher categories and type theory. Kapulkin and Szumiło \cite{kapulkin_internal_2019} showed that dependent type theory with intensional identity types is the internal language of finitely complete $\infty$-categories. Assuming in addition dependent products, Kapulkin \cite{kapulkin_locally_2017} proved that every model of such a type theory presents a locally cartesian closed $\infty$-category. Shulman \cite{shulman_all_2019} showed that HoTT can be interpreted as internal language into any Grothendieck $\infty$-topos. The relation between $\infty$-categories and type theories has been further explored by Nguyen and Uemura \cite{Nguyen_uemura_2025}, where they established a correspondence between certain $\infty$-categories and $\infty$-type theories, a notion that generalises ordinary dependent type theories.

Proving a correspondence between HoTT and elementary $\infty$-toposes is currently an open problem. Elementary $\infty$-toposes are supposed to generalise both Grothendieck $\infty$-toposes as studied by Lurie \cite{lurie_higher_2009} and ordinary 1-toposes, whose internal language is a version of intuitionistic higher-order logic \cite{mac_lane_sheaves_1994}. There is currently no generally agreed upon definition of an elementary $\infty$-topos, even though there has been increasing interest in the topic in the last decade. Two important proposals for a definition were made by Shulman in \cite{shulman_towards_2018} and by Rasekh in \cite{rasekh_theory_2022}. Rasekh also proved from his definition some expected topos-theoretic properties such as descent and locality. Moreover, Gepner and Kock \cite{gepner_univalence_2017} proved several important results about univalent morphisms in presentable locally cartesian closed $\infty$-categories, some of which can also be applied in the non-presentable case of elementary $\infty$-toposes. 

We define in Definition \ref{def: elementary infinity topos} an elementary $\infty$-topos as a finitely complete, locally cartesian closed $\infty$-category with enough univalent morphisms. The main result of this paper is the following:

\begin{theorem*}[Theorem \ref{thm: categorical models present infty toposes}]
   Let $\cC$ be a categorical model of a dependent type theory with $\Sigma, \Pi$ and $\mathtt{Id}$-types, satisfying the $\Pi$-$\eta$ rule and function extensionality with enough univalent universes which are closed under all type constructors. Then, its $\infty$-localisation $\cC_{\infty}$ is an elementary $\infty$-topos.
\end{theorem*}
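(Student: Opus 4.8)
The plan is to realise $\cC$ as a \emph{univalent tribe} in the sense developed earlier in the paper and then to verify the three defining clauses of an elementary $\infty$-topos (Definition \ref{def: elementary infinity topos}) for the $\infty$-localisation $\cC_\infty$ one at a time: finite completeness, local cartesian closure, and the existence of enough univalent morphisms. The type-theoretic structure translates into tribe-theoretic structure in the expected way: the display maps furnish the fibrations, the $\mathtt{Id}$-types furnish path objects, the $\Sigma$-types give closure of fibrations under composition, the $\Pi$-types give dependent products, and the univalent universes give univalent fibrations. The localisation $\cC_\infty$ is obtained by inverting the homotopy equivalences of the tribe, and throughout I would exploit that, since a tribe is in particular a category of fibrant objects, the mapping spaces of $\cC_\infty$ are computed by the usual fibration-and-path-object calculus.

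For finite completeness and local cartesian closure I would adapt the existing arguments to the tribe framework. Finite completeness follows from the result of Kapulkin and Szumilo \cite{kapulkin_internal_2019} that intensional identity types make the localisation a finitely complete $\infty$-category; concretely, the path objects coming from the $\mathtt{Id}$-types let one construct homotopy pullbacks, and a terminal object together with pullbacks suffice. For local cartesian closure I would follow Kapulkin \cite{kapulkin_locally_2017}: the $\Pi$-$\eta$ rule together with function extensionality ensure that the dependent-product functors are homotopically well-behaved --- they preserve homotopy equivalences and satisfy the Beck--Chevalley and Frobenius conditions --- so that after localisation they descend to genuine right adjoints to pullback in $\cC_\infty$. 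These two steps are essentially known, and I expect them to require only routine bookkeeping to fit the univalent-tribe setting.

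The substantive step is the production of enough univalent morphisms, which is where the notion of a univalent fibration in a tribe does the work. The key lemma I would prove is that the localisation functor sends a univalent fibration in the tribe to a univalent morphism in $\cC_\infty$. To do this I would unfold both notions of univalence in terms of mapping spaces: the type-theoretic univalence axiom asserts that the canonical map from the identity type of the universe to the type of equivalences is an equivalence, while the $\infty$-categorical condition (in the sense of Gepner--Kock \cite{gepner_univalence_2017} and Rasekh \cite{rasekh_theory_2022}) asserts that the morphism classifies each of its pullbacks essentially uniquely, i.e. that a certain comparison map of mapping spaces is an equivalence. Using the category-of-fibrant-objects computation of the mapping spaces of $\cC_\infty$, I would match these two statements, so that the type-theoretic univalence of the universe is precisely what is needed for the $\infty$-categorical univalence of the presented morphism. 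The hypothesis that there are \emph{enough} univalent universes, closed under all type constructors, then guarantees that every morphism of $\cC_\infty$ arises, up to equivalence, as a pullback of such a univalent morphism, which is exactly the ``enough univalent morphisms'' clause.

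The main obstacle will be this transfer of univalence across the localisation. Localisation can alter mapping spaces, so the delicate point is to check that the homotopical univalence equivalence supplied by the type theory computes exactly the $\infty$-categorical classifying property, with no loss or gain of homotopical information. This is where the fibration-and-path-object presentation of the mapping spaces of $\cC_\infty$, packaged into the univalent-tribe axioms, must be deployed with care; I would also expect the existence of small subobject classifiers (mentioned in the abstract) to fall out of the same univalence-transfer lemma applied to the $(-1)$-truncated univalent fibrations.
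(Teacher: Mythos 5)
Your proposal follows essentially the same route as the paper: factor the argument through the notion of a univalent tribe, obtain finite completeness and local cartesian closure from the Cisinski/Kapulkin localisation machinery, and isolate as the key lemma the transfer of univalence across the localisation, which the paper carries out by constructing an object of equivalences in the tribe and showing that $\gamma$ sends it to the $\infty$-categorical object of equivalences (Proposition \ref{prop: object of equivalences in loclaisation} and Theorem \ref{theorem: univalence in a tribe implies univalence in infinity-cat}). The only point worth flagging is that the ``enough univalent morphisms'' clause also requires the local classes $S_{\gamma(p)}$ to be closed under composition and dependent products, which the paper verifies separately in Corollaries \ref{cor: categorical model has enough univalent universes} and \ref{thm: tribe enough universal families then localisation enough univalent morphisms} using the closure of the universes under the type constructors; your sketch gestures at this via the hypothesis but does not carry out the check.
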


This result is a contribution to the internal language conjecture, which, in its precise statement, asserts that there is a diagram of functors
\[\begin{tikzcd}
	{\mathbf{CxlCat}_{\mathtt{HoTT}}} && {\mathbf{ElTop}_{\infty}} \\
	{\mathbf{CxlCat}_{\mathtt{Id},1,\Sigma,\Pi}} && {\mathbf{LCCC}_{\infty}} \\
	{\mathbf{CxlCat}_{\mathtt{Id},1,\Sigma}} && {\mathbf{Lex}_{\infty}}
	\arrow["{Cl_{\infty}^{\mathtt{HoTT}}}"', dashed, from=1-1, to=1-3]
	\arrow[from=1-1, to=2-1]
	\arrow[hook, from=1-3, to=2-3]
	\arrow["{Cl_{\infty}^{\mathtt{Id},1,\Sigma,\Pi}}"', from=2-1, to=2-3]
	\arrow[from=2-1, to=3-1]
	\arrow[hook, from=2-3, to=3-3]
	\arrow["{Cl_{\infty}^{\mathtt{Id},1,\Sigma}\sim}"', from=3-1, to=3-3]
\end{tikzcd}\]
in which the horizontal functors are Dwyer-Kan (DK) equivalences. The categories on the left are the categories of models of the respective dependent type theories. The categories on the right are the categories of finitely complete $\infty$-categories, locally cartesian closed $\infty$-categories and the category of elementary $\infty$-toposes.

The horizontal functors take a categorical model, viewed as a category with weak equivalences, where the weak equivalences are obtained from the structure induced by the identity types, and turn it into an $\infty$-category inverting all weak equivalences. We will call this the $\infty$-\textit{localisation}.

The horizontal bottom functor was shown to be a DK equivalence by Kapulkin and Szumiło in \autocite{kapulkin_internal_2019}. Kapulkin \autocite{kapulkin_locally_2017} showed moreover that the horizontal functor in the middle can be obtained as the $\infty$-localisation, but it is currently an open problem to show that it is a DK equivalence. Regarding the top horizontal functor, it is currently even unknown whether the $\infty$-localisation takes values in $\mathbf{ElTop}_{\infty}$, and it is the purpose of this paper to prove that this is indeed the case.

To obtain the desired horizontal functor on the top, we will use a construction of the $\infty$-localisation that is due to Cisinksi and has been applied to models of type theory for example in \cite{Nguyen_uemura_2025}. It is based on his notion of an $\infty$-category of fibrant objects and its localisation via Cisinski's right calculus of fractions, and is developed in \autocite[§7]{cisinski_higher_2019}. There are equivalent alternatives for the construction of the $\infty$-localisation (See e.g. \cite[\nopp 1.6]{Barwick2016-pr}). For example, one can take the hammock localisation followed by taking a fibrant replacement and then take the homotopy coherent nerve. Alternatively, one could use Szumiło's functor $N_f$, which turns a fibration category into a quasicategory of frames \cite[§3.1]{szumilo_two_2014}.

The advantage of Cisinski's approach is that one gets directly a localisation functor and, thanks to Cisinski's work in \autocite[§7]{cisinski_higher_2019}, this $\infty$-localisation commutes with most of the relevant structure in the original category. This allows deriving the result straightforwardly once everything is set up correctly. To do that, we will use the fact that every categorical model $\cC$ of HoTT has the structure of a tribe in the sense of Joyal as defined in \cite{joyal_notes_2017} and every tribe is a fibration category in the sense of Ken Brown as defined in \cite{brown_abstract_1973}. Thus, the nerve $N(\cC)$ is canonically an $\infty$-category of fibrant objects in the sense of \autocite{cisinski_higher_2019}. Cisinski showed that for a fibration category $\cC$ the localisation $\cC_\infty$ of $N(\cC)$ at the weak equivalences is in particular finitely complete and locally cartesian closed, provided $\cC$ carries the necessary structure, and it induces a nicely behaving localisation functor $\gamma \colon N(\cC) \to \cC_\infty$ that commutes with finite limits and the right adjoint to pullback.

Our approach is then to show that a categorical model of HoTT carries the structure of what we will call a \textit{univalent tribe}. That is, a tribe with sufficiently many univalent fibrations, meaning that every fibration is a homotopy unique pullback of a univalent fibration. For this, we will represent the set of equivalences between pullbacks of some fibration by an object of equivalences in the tribe and define univalence as a condition on that object. This notion of univalent tribe is an extension of Joyal's theory of tribes, which we build on and which is developed extensively in \cite{joyal_notes_2017}.

We then show that the localisation $\gamma$ preserves the properties of this object of equivalences, sending it to an object representing the actual space of equivalences in $\cC_\infty$. This allows us to derive that $\gamma$ sends univalent fibrations in $\cC$ to univalent morphisms in $\cC_\infty$ and to deduce our main result, namely that $\cC_\infty$ is an elementary $\infty$-topos. Thus, we obtain the top functor in the diagram as the following composite:

$$\mathbf{CxlCat}_{\mathtt{HoTT}} \to \mathbf{UnivTrb} \to \mathbf{ElTop}_{\infty}.$$

Our definition of an elementary $\infty$-topos is a variation of both proposed definitions in \autocite{shulman_towards_2018} and \autocite{rasekh_theory_2022}, where in addition the existence of finite colimits and the existence of a subobject classifier are assumed. 

Regarding finite colimits, it should be noted that there are examples, such as the $\infty$-category of $\pi$-finite spaces as studied in \cite{anel_category_2025}, which satisfy the axioms of our definition of elementary $\infty$-topos, but do not admit all finite colimits. Since such categories carry structure that is interesting enough to consider interpreting type theory in them, we want to include them in our definition of elementary $\infty$-topos. Moreover, we prove in Theorem \ref{thm: pushout types give finite colimits} that the assumption of pushout types (\cite[§6.8]{the_univalent_foundations_program_homotopy_2013}) and 0-types induces finite colimits in the $\infty$-localisation. Thus, the axiom is implied if we assume the existence of the appropriate higher inductive types in the type theory.

Regarding subobject classifiers, we will prove as Theorem \ref{theorem: every elementary infinity topos has subobject classifiers} that finite completeness, local cartesian closedness and univalent morphisms imply the existence of small subobject classifiers each of which classifies the monomorphisms of some univalent morphism. In \autocite{shulman_towards_2018} and \autocite{rasekh_theory_2022} the existence of a single `global' subobject classifier that classifies \textit{all} monomorphisms is assumed. Concerning the internal language of an $\infty$-topos satisfying this condition, this would require an additional assumption in the type theory. Subobject classifiers in type theory are usually as large as the ambient univalent universe (See \autocite[§10.1.4]{the_univalent_foundations_program_homotopy_2013}). This means that a global subobject classifier would require a universe of all universes, and it would be as large as that universe, inducing size issues. To fix this, one can assume the axiom of \textit{propositional resizing} (\autocite[Axiom 3.5.5]{the_univalent_foundations_program_homotopy_2013}). We will prove that, assuming this axiom, one obtains global subobject classifiers in the $\infty$-localisation of a categorical model of HoTT. We will however for the sake of generality not assume it in the general case. Moreover, Uemura shows in \cite{Uemura_2019_cubical_ass} that the axiom of propositional resizing is independent by constructing a model of univalence in which it fails, hence whose presentation does not admit a subobject classifier. We maintain again that such a model interpreting type theory is interesting enough to call its presentation an elementary $\infty$-topos.

The setup introduced in this paper extends also straightforwardly to type theories which admit $W$-types and graph colimits. We study the $\infty$-localisation of the models of such type theory in forthcoming work.

\vspace{0.5cm}

\noindent \textbf{Outline}

\vspace{0.25cm}

In section \ref{sect: preliminaries}, we will briefly review the necessary background about type theory and its models, tribes and about $\infty$-categories and their localisation. In section \ref{sect: univalence in TT}, we define univalent universes in type theory and the notion of a categorical model of univalence that we will use. In section \ref{sect: univalent tribs}, we introduce the notions of a univalent fibration and a homotopy subobject classifier in a tribe and prove that the latter always exists in a univalent tribe. We also show that every categorical model of univalence is a univalent tribe. In section \ref{sect: univalence in infinity-categories}, we prove that every univalent fibration becomes a univalent morphism in the $\infty$-localisation of a tribe. Moreover, we show that every univalent morphism in a locally cartesian closed and finitely complete $\infty$-category induces the existence of a subobject classifier. Finally, in section \ref{sect: ele infty toposes} we put all pieces together to derive our two main results: every univalent tribe, and hence model of HoTT, presents an elementary $\infty$-topos and every elementary $\infty$-topos has subobject classifiers.

\newpage

The following table gives a road map of where in the paper each of these steps is established:

\[\begin{tikzcd}[row sep=small, column sep={0cm, 0pt}, every label/.append style={outer sep=0.2cm}]
     & \mathbf{CxlCat}_{\mathtt{HoTT}} && \rightarrow &&&\textbf{UnivTrb} &&&& \rightarrow &&&&& \mathbf{ElTop}_{\infty} \\
    \textbf{Object of Equivalences} \quad &  \text{Def. } \ref{def: linv in contextual category}  && \xRightarrow{\text{Lem. \ref{lemma: equivalence context is object of equivalences}}} &&& \text{Constr. } \ref{constr: object of equiv} &&&& \xRightarrow{\text{Prop. \ref{prop: object of equivalences in loclaisation}}} &&&&& \text{Constr. } \ref{construction: object of equivalences infinity cat} \\
    \textbf{Univalence} \quad &  \text{Def. } \ref{def: univalent universe in contextual category}  && \xRightarrow{\text{Prop. \ref{prop: univalent universe is univalent fibration}}} &&& \text{Def. } \ref{def:univalent fibration-categorical def} &&&& \xRightarrow{\text{Thm. \ref{theorem: univalence in a tribe implies univalence in infinity-cat}}} &&&&& \text{Def. } \ref{def: univalence in infinity category1} \\
    \textbf{Subobject Classifier} \quad &  \text{Def. } \ref{def: Prop in contextual category}  && \xRightarrow{\text{Sect. }\ref{sect: homotopy subobject classifiers tribe} \hspace{0.20cm}} &&& \text{Def. } \ref{def: homotopy subobject classifier tribe} &&&& \xRightarrow{\text{Prop. \ref{prop: infty localisation of tribe with subobject classifier shas subobject classifiers}}} &&&&& \text{Def. } \ref{def: subobject classifier infinity-categories} \\
    \textbf{Colimits} \quad &  \text{Def. } \ref{def: cocone in contextual category}  && \xRightarrow{\text{Lemma. }\ref{lemma: categorical model has all internal homotopy pushouts}\hspace{0.20cm}} &&& \text{Def. } \ref{def:internal homotopy pushout new} &&&& \xRightarrow{\text{Prop. \ref{thm: localisation of pi-tribe with internal homotopy initial object and internal homotopy pushouts is cocomplete}}} &&&&& \text{Sect. } \ref{sect: finite colimits} 
\end{tikzcd}\]

\vspace{0.5cm}

\noindent \textbf{Acknowledgements}

 \vspace{0.1cm}

We are grateful to Chris Kapulkin for helpful comments on earlier versions of this draft and for many helpful discussions. We also want to thank Reid Barton, Tim Campion, Jonas Höfer, and Christian Sattler for many helpful conversations. Parts of this work are part of Maximilian Petrowitsch's PhD thesis supervised by Chris Kapulkin.

\tableofcontents

\section{Preliminaries}
\label{sect: preliminaries}

We will first review some preliminary definitions and results about dependent type theory and its models, about tribes, and about $\infty$-categories and their localisations.

    We will work with Martin-Löf dependent type theory with intensional identity types as presented in \cite{martin-lof_intuitionistic_1984}. For a concise presentation of the syntax and rules that we will use, we refer to Appendix A in \cite{kapulkin_simplicial_2021}. From there will assume the basic structural rules, the rules for $\mathtt{Id}$, $\Sigma$ and $\Pi$-types (A.2.) as well as the $\Pi$-$\eta$ rule and the function extensionality rules (A.4.). 

    For the semantics of dependent type theory, we will use the framework of \textit{contextual categories} as introduced by Cartmell in his PhD thesis and published as \autocite{cartmell_generalised_1986}. We will refer to these contextual categories as \textit{categorical models of type theory}. A contextual category may be equipped with particular structures to model particular type constructors, such as such a $\Sigma$, $\Pi$ and $\mathtt{Id}$-structures. The definitions of these can be found in Appendix B of \cite{kapulkin_simplicial_2021}. There is a functor $\mathcal{C} \colon \mathbf{TT}_{\T} \to \mathbf{CxlCat}_{\T}$ from the category of type theories satisfying the rules of $\T$ to the category of those contextual categories that admit the respective structure to model the type constructors of $\T$. This functor sends a type theory to its \textit{syntactic category} which is an initial object in  $\mathbf{CxlCat}_{\T}$ (\cite{de_boer_proof_2020}, \cite{streicher_semantics_1991}).

    Tribes, introduced by Joyal in \cite{joyal_notes_2017}, are a particular weak model of dependent type theory where substitution is only modelled up to isomorphism or homotopy rather than equality.

    \begin{definition}[{\autocite[Def. 3.1.6]{joyal_notes_2017}}]
    Let $\cC$ be a category with a subcategory whose morphisms are called \textit{fibrations}. A morphism in $\cC$ is \textit{anodyne} if it has the left lifting property with respect to every fibration. We denote fibrations as $\twoheadrightarrow$ and anodyne maps as $\tilde{\rightarrowtail}$.
    
    A \textit{tribe} is a category $\cC$ with a subcategory of fibrations satisfying 
    \begin{enumerate}
        \item $\cC$ has a terminal object and for every object $X$, the unique morphism $X \to 1$ is a fibration (i.e. all objects are \textit{fibrant}).
        \item Pullbacks along fibrations exist in $\cC$ and fibrations are stable under pullback.
        \item Every morphism factors as an anodyne morphism followed by a fibration.
        \item Anodyne morphisms are stable under pullbacks along fibrations.
    \end{enumerate}
    \end{definition}
    
    Every categorical model dependent type theory with $\Sigma$ and $\mathtt{Id}$-structure is a tribe \cite[Prop. 9.7]{kapulkin_internal_2019} where the fibrations are the maps isomorphic to a composite of dependent projections. The main idea is that $\mathtt{Id}$-types correspond to path objects and terms of identity types to homotopies. We refer to §3 and the Appendix of \cite{joyal_notes_2017} for the definition of path objects, homotopies, homotopy equivalences, homotopy pullbacks, homotopy monomorphisms and the homotopy category of a tribe which we will denote as $Ho \cC$. We call a fibration that is also a homotopy equivalence a \textit{trivial fibration}. We will denote as $\cC/\!\!/X$ the tribe given by the full subcategory of $\cC/X$ spanned by fibrations and refer to it as the \textit{fibrant slice}. For type theories with extensional $\Pi$-types, the corresponding concept is that of a $\pi$-tribe. 

    \begin{definition}[Cf. {\cite[Def. 3.8.1]{joyal_notes_2017}}]
    \label{def_dep_prod}
    
        A tribe $\cC$ is a \textit{$\pi$-tribe} if for all fibrations $p \colon X \to Y$ the pullback functor $p^* \colon \cC/Y \to \cC /X$ has a partially defined right adjoint $\Pi_p$ that preserves homotopy equivalences and is defined between the fibrant slices $\Pi_p \colon \cC /\!\!/X \to \cC/\!\!/Y$.
    \end{definition}

    This implies that a $\pi$-tribe is cartesian closed, and we will write $\uHom(A,B)$ to denote the internal hom-object for objects $A$ and $B$. Every categorical model of dependent type theory with $\mathtt{Id}, \Sigma$ and $\Pi$-types satisfying function extensionality is a $\pi$-tribe by \autocite[Prop. 5.4]{kapulkin_locally_2017}. In fact, the consequence that $\Pi_p$ preserves trivial fibrations is the categorical expression of function extensionality (Cf. \cite[Lemma 5.9]{shulman_univalence_2015}). We have moreover the following important properties of $\pi$-tribes:

\begin{lemma}[Cf. {\autocite[Lem. 4.4]{kapulkin_locally_2017}}]
    \label{lemma: different characterisations of function extensionality}
    Let $\cC$ be a $\pi$-tribe and $p \colon A \to B$ a fibration. Then, 
    \begin{enumerate}
        \item  $\Pi_p \colon \cC/\!\!/A \to \cC/\!\!/B$ preserves path object factorisations in the underlying fibration categories.
        \item  for any morphisms $h,h'$ in $\cC/\!\!/A$, if $h \sim h'$ in $\cC/\!\!/A$, then $\Pi_p(h) \sim \Pi_p(h')$ in $\cC/\!\!/B$.
        \item the homotopy relation is compatible with adjoint transposition along the partial adjunction $p^* \dashv \Pi_p$. That is, if $\overline{h}$ and $\overline{h'}$ are adjoint transposes of $h$ and $h'$, then $h \sim h'$ if and only if  $\overline{h} \sim \overline{h'}$.\qed
    \end{enumerate}
\end{lemma}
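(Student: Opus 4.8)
The plan is to prove the three statements in order, with (1) carrying the essential content (function extensionality) and (2) and (3) following formally. Recall that a path object for a fibrant object $f \colon X \to A$ of $\cC /\!\!/A$ is a factorisation of the fibrewise diagonal $X \to X \times_A X$ as an anodyne map $X \mathbin{\tilde{\rightarrowtail}} PX$ followed by a fibration $PX \twoheadrightarrow X \times_A X$, and that two parallel maps of fibrant objects are homotopic precisely when they are connected by such a path object. For (1), I would apply $\Pi_p$ to a path object factorisation and verify that the result is again one. Since $\Pi_p$ is a right adjoint (to $p^*$), it preserves the finite products in the fibrant slices, so $\Pi_p(X \times_A X) \cong \Pi_p X \times_B \Pi_p X$, and the image of the fibration leg indeed lands over the fibrewise diagonal target of $\Pi_p X$. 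That leg stays a fibration because $\Pi_p$ preserves fibrations, which is a structural property of the adjunction $p^* \dashv \Pi_p$ in a $\pi$-tribe.

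The hard part, and the only place where the hypotheses beyond ``tribe with dependent products'' are used, is showing that the anodyne leg $X \mathbin{\tilde{\rightarrowtail}} PX$ is sent to a weak equivalence. Here I would invoke exactly the defining property of a $\pi$-tribe coming from function extensionality, namely that $\Pi_p$ preserves homotopy equivalences. Since an anodyne map between fibrant objects is a homotopy equivalence, its image $\Pi_p X \to \Pi_p(PX)$ is again a homotopy equivalence, hence a weak equivalence in the underlying fibration category. Combining these three facts shows that $\Pi_p$ applied to the path object is a path object for $\Pi_p f$, proving (1).

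For (2), given a homotopy $h \sim h'$ between maps into $Y$ in $\cC /\!\!/A$, I would present it by a map $H \colon \mathrm{dom}(h) \to PY$ into a path object for $Y$ whose two endpoint projections recover $h$ and $h'$. Applying $\Pi_p$ and using (1), the object $\Pi_p(PY)$ is a path object for $\Pi_p Y$ whose endpoint projections are the images of the original ones; hence $\Pi_p H$ witnesses $\Pi_p(h) \sim \Pi_p(h')$. For (3), I would write the transposition via the unit $\eta$ and counit $\epsilon$ as $\overline{h} = \epsilon \circ p^*(h)$ and, conversely, $h = \Pi_p(\overline{h}) \circ \eta$. The pullback functor $p^*$ preserves path object factorisations, since it preserves products, fibrations, and anodyne maps along fibrations by the tribe axioms, and therefore preserves homotopies; as the homotopy relation is a congruence, $h \sim h'$ yields $\overline{h} = \epsilon \circ p^*(h) \sim \epsilon \circ p^*(h') = \overline{h'}$. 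Conversely, (2) gives that $\Pi_p$ preserves homotopies, so $\overline{h} \sim \overline{h'}$ yields $\Pi_p(\overline{h}) \circ \eta \sim \Pi_p(\overline{h'}) \circ \eta$, that is $h \sim h'$. This establishes the equivalence in (3).
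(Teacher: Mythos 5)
The paper gives no proof of this lemma at all: it is stated with a \qed and simply cites \autocite[Lem.~4.4]{kapulkin_locally_2017}, so there is nothing internal to compare against. Your argument is correct and is essentially the standard proof found in that reference: $\Pi_p$ preserves the product $X\times_A X$ as a partial right adjoint, preserves the fibration leg, and sends the anodyne leg to a weak equivalence precisely because the $\pi$-tribe axiom (function extensionality) makes $\Pi_p$ preserve homotopy equivalences; (2) and (3) then follow formally, using that $p^*$ preserves path objects by the tribe axioms and that homotopy is a congruence. The only blemish is in (3): with $h$ living in $\cC/\!\!/A$ and $\overline{h}$ in $\cC/\!\!/B$, the transposition formulas should read $\overline{h}=\Pi_p(h)\circ\eta$ and $h=\epsilon\circ p^*(\overline{h})$, i.e.\ you have attached each formula to the wrong direction; since you establish both that $p^*$ and that $\Pi_p$ preserve homotopies, both implications still go through after relabelling.
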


 We will use the fact that every tribe is a fibration category in the sense of Ken Brown \cite{brown_abstract_1973} where the weak equivalences are the homotopy equivalences.

\begin{definition}[\autocite{brown_abstract_1973}]

    A \textit{fibration category} is a category $\cC$ with a subcategory of \textit{fibrations} and a subcategory of \textit{weak equivalences} denoted $\xrightarrow{\sim}$, where a map is an \textit{trivial fibration} if it is a fibration and a weak equivalence, such that
    \begin{enumerate}
        \item $\cC$ has a terminal object and for all objects $X \in \cC$ the unique morphism $X \to 1$ is a fibration.
        \item pullbacks along fibrations exist and (trivial) fibrations are stable under pullback.
        \item any morphism can be factored as a weak equivalence followed by a fibration.
        \item the class of weak equivalences satisfies the 2-out-of-3 property.
    \end{enumerate}
    \end{definition}

    A functor between fibration categories is \textit{exact} if it preserves fibrations, trivial fibrations, pullbacks along fibrations, and terminal objects. For the definition of path objects and homotopies in a fibration category see \cite[§I.1]{brown_abstract_1973}. Since all objects in a tribe are cofibrant, every path object in its underlying fibration category is via a trivial fibration equivalent to one in a tribe, and, by choosing a section, the two notions of homotopy coincide.
    
    We will pick quasi-categories as our model of $(\infty,1)$-categories, though most arguments can be spelt out in a model-independent fashion. We will refer to them simply as $\infty$-categories. Classical expositions of quasi-categories can be found in Joyal's work \cite{joyal_theory_2008} or in Lurie's \textit{Higher Topos Theory} \cite{lurie_higher_2009}. Most of what we need is however contained in Land's introductory book \cite{land_introduction_2021}. We will use in particular the straightening-unstraightening correspondence \autocite[§2.2.1]{lurie_higher_2009} and the Yoneda lemma for $\infty$-categories \autocite[Prop. 4.2.10]{land_introduction_2021}. For any $\infty$-category $\cCi$ we denote as $h\cCi$ its homotopy category. We call a morphism in an $\infty$-category an \textit{equivalence} if it becomes an isomorphism in $h \cC$. We say that a cone $\Tilde{F} \colon  \Delta^0 \star I \to \cCi$ over some diagram $F\colon I \to \cCi$ is a \textit{limit cone} if the map $\cCi/ \Tilde{F} \to \cCi/F$ is a trivial Kan fibration. A morphism $f \colon a \to b$ in an $\infty$-category  is a \textit{monomorphism} or \textit{(-1)-truncated} if the following square is a pullback

    \[\begin{tikzcd}
    	a & a \\
    	a & b
    	\arrow["id", from=1-1, to=1-2]
    	\arrow["id"', from=1-1, to=2-1]
    	\arrow["f", from=1-2, to=2-2]
    	\arrow["f"', from=2-1, to=2-2]
    \end{tikzcd}\]

     To avoid confusion, we will call a morphism in in the $\infty$-category of spaces $\mathbf{Spc}$ always `(-1)-truncated' if it is a monomorphism in this sense and we will refer to it as `monomorphism' if it is an actual strict monomorphism of simplicial sets.

    \begin{lemma}[{\autocite[Rmk. 5.5.6.10]{lurie_higher_2009}}]
    
        \label{lemma: different characterisations of monomorphism in infty cat}
        Let $f \colon a \to b$ be a morphism in an $\infty$-category. Then,
        \begin{enumerate}
            \item $f$ is a monomorphism if and only if for every morphism $ g \colon x \to b$, the mapping space $\mathpzc{Map}_{\cCi/b}(g,f)$ is either empty or contractible.
            \item $f$ is an equivalence if and only if for every morphism $ g \colon x \to b$, the mapping space $\mathpzc{Map}_{\cCi/b}(g,f)$ is contractible.
            \qed
        \end{enumerate} 
    \end{lemma}

    Cisinski generalised the notion of a fibration category to $\infty$-categories.

    \begin{definition}[{\autocite[Def. 7.4.6, 7.4.12 and 7.5.7]{cisinski_higher_2019}}]

     An \textit{$\infty$-category of fibrant objects} is a triple $(\cCi, W, Fib)$ where $\cCi$ is an $\infty$-category, $W\subseteq \cCi$ is a subcategory whose morphisms are called \textit{weak equivalences} and $Fib \subseteq \cCi$ is a subobject whose morphisms are called \textit{fibrations} such that

     \begin{enumerate}
         \item $Fib$ contains all identities and is closed under composition.
         \item $W$ satisfies the 2-out-of-3 property.
         \item $\cCi$ has a terminal object $1$ and for any object $x \in \cCi$ any map $x \to 1$ is in $Fib$.
         \item $\cCi$ admits pullbacks along fibrations and (trivial) fibrations are stable under pullback.
         \item for any morphism $f \colon x \to y$ there exist a weak equivalence $w \colon x \to x'$ in $W$ and a fibration $p \colon x' \to y$ such that $f$ is a composition of $p$ and $w$.
     \end{enumerate}
     
\end{definition}

    In particular, the nerve $N(\cC)$ of any fibration category $\cC$, hence also any tribe, is canonically an $\infty$-category of fibrant objects. The $\infty$-localisation $\cC_{\infty}$ of $N(\cC)$ at the set of weak equivalences always exists and is by \autocite[Prop. 7.5.6]{cisinski_higher_2019} finitely complete, giving us a localisation functor $\gamma \colon N(\cC) \to \cC_{\infty}$ which preserves terminal objects and pullbacks along fibrations. We will also write $\cC$ instead of $N(\cC)$. By \autocite[Rmk. 7.1.4]{cisinski_higher_2019}, we can (and will always) choose $\gamma$ and $\cC_{\infty}$ such that $\gamma$ is the identity on objects. Given an object $A$ in $\cC$, we will therefore write $A$ instead of $\gamma(A)$ whenever it is clear from the context that we mean the induced object in the localisation $\cC_{\infty}$. In order to perform concrete calculations in $\cC_{\infty}$ we will use Cisinski's $\infty$-categorical right calculus of fractions developed in \cite[§7.2]{cisinski_higher_2019} applied to the class of trivial fibrations of $\cC$. This implies by the formula \cite[\;7.2.10.4]{cisinski_higher_2019} that any morphism in $\cC_{\infty}$ can be written as $\gamma(p)\gamma(s)^{-1}$ where $s$ is a trivial fibration. The following result about adjunctions and localisations is crucial.

    \begin{proposition}[{\autocites[Prop. 7.1.14]{cisinski_higher_2019}[Prop. 5.1.15]{land_introduction_2021} }]
    \label{prop:adjoint functors derive adjoint functors unit/counit}
    Suppose $F  \colon \cC \rightleftarrows \cD \colon G$ is an adjunction of exact functors between fibration categories with unit $\eta$ and counit $\epsilon$. Then, there is an adjunction $F_{\infty}  \colon \cC_{\infty} \rightleftarrows \cD_{\infty}  \colon G_{\infty}$ between their $\infty$-localisations with unit  $\eta_{\infty}$ and counit $\epsilon_{\infty}$ such that the diagrams
    
    \[\begin{tikzcd}
    	\cC & \cD \\
    	{\cC_{\infty}} & {\cD_{\infty}}
    	\arrow[""{name=0, anchor=center, inner sep=0}, "F", shift left=2, from=1-1, to=1-2]
    	\arrow["\gamma"', from=1-1, to=2-1]
    	\arrow[""{name=1, anchor=center, inner sep=0}, "G", shift left=2, from=1-2, to=1-1]
    	\arrow["\gamma", from=1-2, to=2-2]
    	\arrow[""{name=2, anchor=center, inner sep=0}, "{F_{\infty}}", from=2-1, to=2-2]
    	\arrow[""{name=3, anchor=center, inner sep=0}, "{G_{\infty}}", shift left=3, from=2-2, to=2-1]
    	\arrow["\dashv"{anchor=center, rotate=-90}, draw=none, from=0, to=1]
    	\arrow["\dashv"{anchor=center, rotate=-90}, draw=none, from=2, to=3]
    \end{tikzcd}\]
    
    \[\begin{tikzcd}
    	{\cC \times \Delta^1} & \cC & {\cD \times \Delta^1} & \cD \\
    	{\cC_{\infty} \times \Delta^1} & {\cC_{\infty}} & {\cD_{\infty} \times \Delta^1} & {\cD_{\infty}}
    	\arrow["\eta", from=1-1, to=1-2]
    	\arrow["{\gamma \times id}"', from=1-1, to=2-1]
    	\arrow["\gamma", from=1-2, to=2-2]
    	\arrow["\epsilon", from=1-3, to=1-4]
    	\arrow["{\gamma \times id}"', from=1-3, to=2-3]
    	\arrow["\gamma", from=1-4, to=2-4]
    	\arrow["{\eta_{\infty}}"', from=2-1, to=2-2]
    	\arrow["{\epsilon_{\infty}}"', from=2-3, to=2-4]
    \end{tikzcd}\]
    commute up to homotopy.
    \qed\end{proposition}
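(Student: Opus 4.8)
The statement is the universal property of the $\infty$-localisation applied to an adjunction, so the plan is to descend each piece of the adjunction datum along the localisation functors $\gamma_\cC$ and $\gamma_\cD$ and then verify the triangle identities up to homotopy. First I would observe that an exact functor between fibration categories preserves all weak equivalences: since every object is fibrant and an exact functor preserves trivial fibrations, this is Ken Brown's lemma \cite{brown_abstract_1973}. Hence $\gamma_\cD \circ F$ and $\gamma_\cC \circ G$ send weak equivalences to equivalences, and by the universal property of $\gamma_\cC, \gamma_\cD$ (\cite{cisinski_higher_2019}) they factor, uniquely up to a contractible space of choices, as $F_\infty \circ \gamma_\cC$ and $G_\infty \circ \gamma_\cD$. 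This produces $F_\infty, G_\infty$ together with the homotopies witnessing that the first square commutes.

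Next, to descend the unit and counit, I would use that localisation commutes with the product with $\Delta^1$: the functor $\gamma_\cC \times \mathrm{id} \colon \cC \times \Delta^1 \to \cC_\infty \times \Delta^1$ exhibits $\cC_\infty \times \Delta^1$ as the localisation of $\cC \times \Delta^1$ at the class $\{(w,\mathrm{id}) : w \in W\}$, because $\mathrm{Fun}(\cC_\infty \times \Delta^1, \cEi) \simeq \mathrm{Fun}(\Delta^1, \mathrm{Fun}(\cC_\infty, \cEi))$ and $\mathrm{Fun}(\cC_\infty,\cEi)$ is the full subcategory of $W$-inverting functors. Viewing $\eta$ as a functor $H \colon \cC \times \Delta^1 \to \cC$ with endpoints $\mathrm{id}_\cC$ and $GF$, both of which preserve weak equivalences, the composite $\gamma_\cC H$ inverts $\{(w,\mathrm{id})\}$ and therefore factors through a functor $\eta_\infty \colon \cC_\infty \times \Delta^1 \to \cC_\infty$; its endpoints are $\mathrm{id}_{\cC_\infty}$ and, up to the homotopies of the first step, $G_\infty F_\infty$, so $\eta_\infty$ is a natural transformation $\mathrm{id}_{\cC_\infty} \Rightarrow G_\infty F_\infty$, and the factorisation is precisely the second square commuting up to homotopy. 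The same construction applied to $\epsilon$ yields $\epsilon_\infty$.

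Finally I would promote this data to an adjunction. The triangle identities hold strictly in $\cC$ and $\cD$; applying $\gamma$ and transporting along the coherence homotopies of the two previous steps gives homotopies $(\epsilon_\infty F_\infty)(F_\infty \eta_\infty) \simeq \mathrm{id}_{F_\infty}$ and $(G_\infty \epsilon_\infty)(\eta_\infty G_\infty) \simeq \mathrm{id}_{G_\infty}$. Since natural transformations out of $\cC_\infty$ are detected after precomposition with the identity-on-objects, essentially surjective functor $\gamma$, it suffices to produce these homotopies after precomposing with $\gamma$, where they follow from the strict identities upstairs. From the two homotopy triangle identities, the standard argument shows that for all $c, d$ the maps $\mathpzc{Map}_{\cD_\infty}(F_\infty c, d) \to \mathpzc{Map}_{\cC_\infty}(c, G_\infty d)$ given by $f \mapsto G_\infty(f)\circ \eta_\infty$ and by $g \mapsto \epsilon_\infty \circ F_\infty(g)$ are mutually homotopy inverse, hence equivalences; by Lurie's unit criterion \cite[5.2.2.8]{lurie_higher_2009} this exhibits $F_\infty \dashv G_\infty$ with unit $\eta_\infty$ and counit $\epsilon_\infty$.

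The main obstacle is the coherence bookkeeping of the last step: the triangle identities are strict equalities upstairs, but the squares relating $F_\infty\gamma$ to $\gamma F$ and $\eta_\infty$ to $\gamma\eta$ commute only up to homotopy, so the descent of the triangle identities requires carefully pasting these homotopies. One must also ensure that the resulting data assembles into a genuine $\infty$-adjunction rather than a mere pointwise family of equivalences, which is exactly what the reduction to Lurie's unit criterion is designed to guarantee.
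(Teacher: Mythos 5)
The paper offers no proof of this proposition: it is imported verbatim from Cisinski (Prop.\ 7.1.14 of \emph{Higher Categories and Homotopical Algebra}), and the \(\qed\) marks it as an external result. Your sketch is a correct reconstruction of how that result is proved, and it follows essentially the same route as the cited source: Ken Brown's lemma to see that exact functors invert weak equivalences, the universal property of \(\gamma\) to descend \(F\) and \(G\), compatibility of localisation with \(-\times\Delta^1\) to descend \(\eta\) and \(\epsilon\), full faithfulness of \(\gamma^*\) to descend the triangle identities up to homotopy, and a unit criterion to upgrade the data to a genuine \(\infty\)-adjunction. The only delicate point is the coherence pasting in the last step, which you correctly identify and which is exactly where the work lies in Cisinski's own argument.
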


    This proposition implies in particular that the $\infty$-localisation commutes up to homotopy with adjoint transposition meaning that

    \[\begin{tikzcd}
    	{\mathsf{Hom}_{\cC}(FX,Y)} & {\mathsf{Hom}_{\cC}(X,GY)} \\
    	{\mathpzc{Map}_{\cC_{\infty}}(F_{\infty}X,Y)} & {\mathpzc{Map}_{\cC_{\infty}}(X,G_{\infty}Y)}
    	\arrow["\cong", from=1-1, to=1-2]
    	\arrow["\gamma"', from=1-1, to=2-1]
    	\arrow["\gamma", from=1-2, to=2-2]
    	\arrow["\simeq"', from=2-1, to=2-2]
    \end{tikzcd}\]
    commutes up to homotopy, a fact that we will assume when using Proposition \ref{prop:adjoint functors derive adjoint functors unit/counit}.

    Moreover, it follows from \autocite[Prop. 7.6.16]{cisinski_higher_2019} that if $\cC$ is a $\pi$-tribe then $\cC_{\infty}$ is locally cartesian closed. In the particular case in which we have the partial adjunction $p^* \dashv \Pi_{p}$ in a $\pi$-tribe $\cC$ for some fibration $p \colon X \to Y$, we get by the fact that $(\cC/\!\!/X)_{\infty} \simeq \cC_{\infty}/X$ proved in \cite[Cor. 7.6.13]{cisinski_higher_2019} that the induced adjoint functors from Proposition \ref{prop:adjoint functors derive adjoint functors unit/counit} are equivalent to an adjoint pair $\gamma(p)^* : \cC_{\infty}/Y \rightleftarrows \cC_{\infty}/X \colon \Pi_{\gamma(p)}$ making the diagrams above commute up to homotopy. We will use this fact implicitly throughout. We will write $\myuline{\mathpzc{Map}}(a,b)$ to denote the internal mapping space object for objects $a$ and $b$.

\section{Univalence in Type Theory}
\label{sect: univalence in TT}

We will start by defining the notion of a univalent universe in dependent type theory and in contextual categories.

\subsection{Univalent Universes}
\begin{definition}
\normalfont
     A \textit{universe} $U$ in a type theory $\T$ is given by the following rules:
    $$
        \infer[(U_F)]{\vdash U \text{ type}}{} \qquad \infer[(U_I)]{X: U \vdash \mathtt{El}(X) \text{ type}}{}
    $$
    We will also assume that $U$ is closed under type constructors. The rules for that can be found in \autocite[Appendix A.3]{kapulkin_simplicial_2021}.    

\end{definition}

\begin{definition}[{\cite[§6.2]{rijke_introduction_2022}}]
    A type theory $\T$ has \textit{enough universes} if for any finite list of types
    $$\Gamma_0 \vdash A_0 \text{ type} \qquad ... \qquad \Gamma_n \vdash A_n  \text{ type}$$
    there is a universe $U$ closed under the type constructors of $\T$ together with a term 
    $$\Gamma_i \vdash \widehat{A_i} : U$$
     for all $0 \leq i \leq n$ such that
    $$\Gamma_i \vdash \mathtt{El}(\widehat{A_i}) \equiv A_i \text{ type}.$$
    We define $U_0$ to be the universe obtained from the empty list of types. For any universe $U_n$, we define the successor $U_{n+1}$ to be the universe obtained from the list
    \begin{align*}
    & \vdash U \text{ type} \\
    X: U & \vdash \mathtt{El}(X) \text{ type}.
    \end{align*}

\end{definition}

\begin{definition}
    \normalfont
    \label{def: type of eauivalences}
    Let $\T$ be a type theory with $\mathtt{Id}$ and $\Sigma$ and $\Pi$-types, let $A,B$ be types of $\T$ and let $f \colon  A \to B$ be a function. We define a type of equivalences $A \simeq B$ as follows. Define
    $$\mathtt{LInv}(f)  :\equiv \sum_{g \colon  B \to A}\mathtt{Id}_{A \to A}(gf,1_A)$$
    $$\mathtt{RInv}(f)  :\equiv \sum_{g \colon  B \to A}\mathtt{Id}_{B \to B}(fg,1_B).$$
    where $fg$ and $gf$ are obtained as composites using the rules for non-dependent function types. Concretely, they are defined using the composition function 
    $$\mathtt{comp} \colon (B \to C) \to ((A \to B) \to (A \to C))$$
    given by $\mathtt{comp}:= \lambda g.\lambda f. \lambda x. g(f(x))$. Next, define
    $$\mathtt{isEquiv}(f) :\equiv \mathtt{LInv}(f) \times \mathtt{RInv}(f)$$
    where we say that $f$ is an equivalence if $\mathtt{isEquiv}(f)$ is inhabited. Thus, we define the type of equivalences of functions $A \to B$ as
    $$(A\simeq B) :\equiv \sum_{f \colon  A \to B} \mathtt{isEquiv}(f).$$

\end{definition}

Given a universe $U$ and $A: U$ and $B: U$, we can form the type $\mathtt{Id}_{U}(A,B)$ of proofs that types $A$ and $B$ are equal in the universe $U$.  Using the identity elimination rule, we obtain a term
$$A: U, B: U, p: \mathtt{Id}_U(A,B) \vdash \mathtt{idToEquiv}(A,B,p) : \mathtt{El}(A) \simeq \mathtt{El}(B).$$

\begin{definition}
    \normalfont
    \label{def: univalence in type theory}
    We say that $U$ is \textit{univalent} if for any $A : U$ and $B : U$ the function

    $$\lambda p.\mathtt{idToEquiv}(A,B,p) : \mathtt{Id}_U(A,B) \to \mathtt{El}(A) \simeq \mathtt{El}(B)$$
    is an equivalence. That is, if there is a term,

        $$A: U, B: U \vdash \mathtt{uvt}(A,B) : \mathtt{isEquiv}(\lambda p.\mathtt{idToEquiv}(A,B,p)).$$
\end{definition}

\begin{definition}
    \label{def: type theory Prop}
    Given a type $A :U$ in a universe $U$, we define the type of witnesses that $A$ is a proposition as
    $$\mathtt{isProp}(A) :\equiv \prod_{x,y : \mathtt{El}(A)}\mathtt{Id}_{\mathtt{El}(A)}(x,y)$$
    and we define $$\mathtt{Prop}_U : \equiv \sum_{A: U}\mathtt{isProp}(A).$$
    
\end{definition}

Observe that for any successor universe $U_{n+1}$ there is a natural inclusion $U_{n} \to U_{n+1}$ which induces a natural function $\mathtt{Prop}_{U_n} \to \mathtt{Prop}_{U_{n+1}} $.

\begin{definition}
    A type theory $\T$ satisfies \textit{propositional resizing} if every function $\mathtt{Prop}_{U_n} \to \mathtt{Prop}_{U_{n+1}} $ is an equivalence.
\end{definition}

\subsection{Models of Univalence}

We will now define what it means for a categorical model $\cC$ of type theory to model the univalence axiom. In particular if $\T$ is a univalent type theory in the above sense then $\cC(\T)$ will be such a model of the univalence axiom. Our definition will be equivalent to the one in \cite[Appendix B.3]{kapulkin_simplicial_2021} by function extensionality, but it will be simpler from a categorical point of view, avoiding pointwise identifications using $\Pi$-types. 

Fix a categorical model $\cC$ that carries an $\mathtt{Id}$, $\Sigma$ and $\Pi$-structure satisfying the $\eta$-rule. If $\Gamma.A$ and $\Gamma.B$ are two context extensions of $\Gamma$, we will define the ordinary function context as $\Gamma.[A,B] := \Gamma.\Pi(A,p_A^*B)$ using the $\Pi$-structure. By the $\Pi$-structure, for any context extension $\Gamma.A$ of $\Gamma$ there is a canonical section $\lambda((id_A,id_A)) \colon \Gamma \to \Gamma.[A,A]$.

\begin{definition}
    \label{def: linv in contextual category}
    
    Let $\Gamma.A$ and $\Gamma.B$ be two context extensions of $\Gamma$. Define $\Gamma.[A,B].\mathsf{RInv}_{A,B}$ to be the object
    $$\Gamma.[A,B].\Sigma(p_{[A,B]}^*[B,A], (\mathsf{comp}_{B,A,B},\lambda((id_B,id_B)) p_{[A,B]} p_{p_{[A,B]}^*[B,A]}
    )^*\mathsf{Id}_{[B,B]})$$
    where $\mathsf{comp}_{B,A,B} \colon \Gamma.[A,B].p_{[A,B]}^*[B,A] \to \Gamma.[B,B]$ is defined as 
    $$\mathsf{comp}_{B,A,B} := \lambda((id_{\Gamma.[A,B].p_{[A,B]}^*[B,A].(p_{[B,A]}q_{p_{[A,B]},[B,A]})^*B},c))$$
    and $c:= q_{p_A,B} \mathsf{app}_{A, p_A^*B} (id_{\Gamma.[A,B]} \times_{\Gamma} (q_{p_B,A} \mathsf{app}_{B,p_B^*A}))$ is defined as the composite
    $$\Gamma.[A,B].p_{[A,B]}^*[B,A].(p_{[B,A]}q_{p_{[A,B]},[B,A]})^*B \xrightarrow{id_{\Gamma.[A,B]} \times_{\Gamma} (q_{p_B,A} \mathsf{app}_{B,p_B^*A})}  \Gamma.[A,B].p_{[A,B]}^*A \xrightarrow{\mathsf{app}_{A,p_A^*B}} \Gamma.A.p_A^*B \xrightarrow{q_{p_A,B}} \Gamma.B.$$
    Define $\Gamma.[A,B].\mathsf{LInv}_{A,B}$ to be the object
    $$\Gamma.[A,B].\Sigma(p_{[A,B]}^*[B,A], (\mathsf{comp}_{A,B,A},\lambda((id_A,id_A)) p_{[A,B]} p_{p_{[A,B]}^*[B,A]})^*\mathsf{Id}_{[A,A]})$$ 
    where $\mathsf{comp}_{A,B,A} \colon \Gamma.[A,B].p_{[A,B]}^*[B,A] \to \Gamma.[A,A]$ is defined as 
    $$\mathsf{comp}_{A,B,A} := \lambda((id_{\Gamma.[B,A].p_{[B,A]}^*[A,B].(p_{[A,B]}q_{p_{[B,A]},[A,B]})^*A},c'))\mathsf{exch}_{[A,B],[B,A]}$$
    and $c' := q_{p_B,A} \mathsf{app}_{B, p_B^*A} (id_{\Gamma.[B,A]} \times_{\Gamma} (q_{p_A,B} \mathsf{app}_{A,p_A^*B})) $ is defined as the composite
    $$\Gamma.[B,A].p_{[B,A]}^*[A,B].(p_{[A,B]}q_{p_{[B,A]},[A,B]})^*A \xrightarrow{id_{\Gamma.[B,A]} \times_{\Gamma} (q_{p_A,B} \mathsf{app}_{A,p_A^*B})}  \Gamma.[B,A].p_{[B,A]}^*B \xrightarrow{\mathsf{app}_{B,p_B^*A}} \Gamma.B.p_B^*A \xrightarrow{q_{p_B,A}} \Gamma.A.$$
    Then, define
    $$\Gamma.[A,B].\mathsf{isEquiv}_{A,B} := \Gamma.[A,B].\mathsf{LInv}_{A,B} \times \mathsf{RInv}_{A,B}$$
    $$\Gamma.\mathsf{Equiv}(A,B) := \Gamma.\Sigma([A,B],\mathsf{isEquiv}_{A,B}).$$

\end{definition}

\begin{definition}
    For any context extension $\Gamma.A$ of $\Gamma$ the map $\lambda(id_A,id_A)\colon \Gamma \to \Gamma.A$ induces a canonical section $\mathsf{id}_A^{\mathsf{isEquiv}} \colon  \Gamma \to \Gamma.[A,A].\mathsf{isEquiv}_{A,A}$. This again induces a canonical section $\mathsf{id}_A^{\mathsf{Equiv}} \colon \Gamma \to \Gamma.\mathsf{Equiv}(A,A)$.

\end{definition}

\begin{definition}
    Given a context extension $\Gamma.A.B$, using the $\mathtt{Id}$-structure we define a map
    $$\mathsf{idToEquiv}_{A,B} \colon   \Gamma.A.p_A^*A.\mathsf{Id}_A \to \Gamma.A.p_A^*A.\mathsf{Id}_A.p_{\mathsf{Id}_A}^*\mathsf{Equiv}(p_{p_A^*A}^*B, q_{p_A,A}^*B)$$
    as
    $$ \mathsf{idToEquiv}_{A,B} := J_{p_{Id_A}^*\mathsf{Equiv}(p_{p_A^*A}^*B, q_{p_A,A}^*B), q_{refl_A,p_{Id_A}^* \mathsf{Equiv}(p_{p_A^*A}^*B, q_{p_A,A}^*B)} \mathsf{id}_B^{\mathsf{Equiv}} }.$$
    Then the $\Pi$-structure gives us a section 
    $$\lambda(\mathsf{idToEquiv}_{A,B}) \colon \Gamma.A.p_A^*A \to \Gamma.A.p_A^*A.[\mathsf{Id}_A, \mathsf{Equiv}(p_{p_A^*A}^*B, q_{p_A,A}^*B)].$$
    We define univalence as the context representing that this map is itself an equivalence
    
    $$\mathsf{isUvt}(A,B) := \Gamma.A.p_A^*A.\lambda(\mathsf{idToEquiv}_{A,B})^*\mathsf{isEquiv}_{\mathsf{Id}_A,\mathsf{Equiv}(p_{p_A^*A}^*B, q_{p_A,A}^*B)}.$$

\end{definition}

\begin{definition}
    A \textit{universe} in a categorical model $\cC$ is a distinguished object $U.\mathsf{El} \in Ob_2\cC$. We say that the universe is closed under type constructors if it satisfies the conditions of \cite[Appendix B.2.1]{kapulkin_simplicial_2021}. 
\end{definition}

\begin{definition}
\label{def: univalent universe in contextual category}
    Given a universe $U.\mathsf{El}$ in a categorical model $\cC$ we say that $U.\mathsf{El}$ \textit{satisfies the univalence axiom} if $\cC$ is equipped with a section $\mathsf{uvt}_{U,\mathsf{El}} \colon U.p_U^*U \to \mathsf{isUvt}(U,\mathsf{El})$.
\end{definition}

\begin{definition}
    Given a universe $U.\mathsf{El}$ in a categorical model $\cC$ there is a canonical map
    $$\mathsf{id}^{\delta}  \colon  U \to U.p_U^*U.\mathsf{Equiv}(p_{p_U^*U}^*\mathsf{El}, q_{p_U,U}^*\mathsf{El})$$
    induced by $\lambda(id_{U.\mathsf{El}},id_{U.\mathsf{El}}) \colon  U \to U.[\mathsf{El}, \mathsf{El}]$. 
\end{definition}

\begin{definition}
    A categorical model $\cC$ has \textit{enough universes} if for every finite set of context context extensions $\Gamma_0.A_0,...,\Gamma_n.A_n$ there is a universe $U.\mathsf{El}$ closed under the type constructors that are modelled by $\cC$ together with for all $0 \leq i \leq n$ a map $\chi_{p_{A_i}} \colon  \Gamma_i \to U$ such that $p_{A_i} = \chi_{p_{A_i}}^*p_{El}$, i.e. $p_{A_i}$ is the canonical pullback 

    \[\begin{tikzcd}
    	{\Gamma_i.A_i} & {U.El} \\
    	{\Gamma_i} & U
    	\arrow["{q_{\chi_{p_{A_i}},U.El}}", from=1-1, to=1-2]
    	\arrow["{p_{A_i}}"', from=1-1, to=2-1]
    	\arrow["\lrcorner"{anchor=center, pos=0.125}, draw=none, from=1-1, to=2-2]
    	\arrow["{p_{El}}", from=1-2, to=2-2]
    	\arrow["{\chi_{p_{A_i}}}"', from=2-1, to=2-2]
    \end{tikzcd}\]
We define $U_0.\mathsf{El}_0$ to be the universe obtained from the empty set of context extensions and given $U_n.\mathsf{El}_n$ we define $U_{n+1}.\mathsf{El}_{n+1}$ to be the universe obtained from the set consisting of the dependent projections $U_n \to 1$ and $U_n.\mathsf{El}_n \to U_n$. This induces canonical maps $\chi_{p_{\mathsf{El}_n}} \colon U_n \to U_{n+1}$.
\end{definition}

\begin{definition}
    \label{def: Prop in contextual category}
    Let $U.\mathsf{El}$ be a universe in a categorical model $\cC$. We define a context
    $$U.\mathsf{isProp} : \equiv U.\mathsf{El}.\Pi(U.\mathsf{El.p_{\mathsf{El}}^*\mathsf{El}}, U.\mathsf{El.p_{\mathsf{El}}^*\mathsf{El}}.\mathtt{Id}_{\mathsf{El}})$$
    which comes with a dependent projection $U.\mathsf{isProp} \to U$. Then, we define

    $$\mathsf{Prop}_U : \equiv \Sigma(U, U.\mathsf{isProp}).$$
\end{definition}

\begin{definition}
    \label{def: propositional resizing in contextual category}
    A categorical model $\cC$ satisfies \textit{propositional resizing}, if for any two  universes with $\chi_{p_{El_n}} \colon U_{n} \to U_{n+1}$ and for the map
    $$U_n.\mathsf{isProp} \to U_{n+1}.\mathsf{isProp}$$
    which is induced by pullback and which we call $i$ there is a section
    $$1 \to \lambda((id_{p_{U_n.\mathsf{isProp}}},i)))^*(\mathsf{isEquiv}_{U_n.\mathsf{isProp},U_{n+1}.\mathsf{isProp}}).$$
\end{definition}

It is clear from the definition that if a universe $U$ in a type theory $\T$ satisfies the univalence axiom in the sense of Definition \ref{def: univalence in type theory}, then in its syntactic category the interpretation of the universe $\cC(\T)$, $[ X: U \vdash \mathtt{El}(X) \text{ type} ]$ satisfies the univalence axiom in the sense of Definition \ref{def: univalent universe in contextual category}. In particular, if $\T$ has enough univalent universes, then $\cC(\T)$ has enough univalent universes and if $\T$ satisfies propositional resizing then so does  $\cC(\T)$.

\subsection{Intermezzo on Pushout Types}

\begin{definition}[{\cite[6.8]{the_univalent_foundations_program_homotopy_2013}}]
    Given a span of functions $f \colon C \to A$ and $g \colon C \to B$ a \textit{pushout type} is the higher inductive type $A \amalg_C B$ presented by constructors

    \begin{align*}
        \mathtt{inl} & : C \to A \amalg_C B \\
        \mathtt{inr} & : C \to B \amalg_C B \\
        \text{for each $c : C$, a term } \mathtt{glue}(c) & : \mathtt{Id}_{A \amalg_C B}(\mathtt{inl}(c), \mathtt{inr}(c))
    \end{align*}
\end{definition}

\begin{definition}[{\cite[Def. 6.8.1]{the_univalent_foundations_program_homotopy_2013}}]
     Given a span of functions $f \colon C \to A$ and $g \colon C \to B$ and a type $D$ define the type of cocones under this span with apex $D$ as 
     
     $$\mathtt{Cocone}(f,g, D):\equiv \sum_{i : A \to D}\sum_{j : B \to D} \mathtt{Id}_{C \to D}(if, jg).$$ 
\end{definition}

\begin{lemma}[{\cite[Lem. 6.8.2]{the_univalent_foundations_program_homotopy_2013}}]
    There is a canonical equivalence

    $$(A \amalg_C B \to D)\simeq \mathtt{Cocone}(f,g, D)$$
    given by sending $t$ to $(t \mathtt{inl}, t \mathtt{inr}, \mathtt{ap}_t \mathtt{glue}).$
\end{lemma}

\begin{definition}
    \label{def: cocone in contextual category}
    Let $\Gamma.A$, $\Gamma.B$, $\Gamma.C$ and $\Gamma.D$ be context extensions in a categorical model $\cC$ of dependent type theory, and let $f \colon \Gamma \to \Gamma.[C,A]$ and $g \colon \Gamma \to \Gamma.[C,B]$ be sections. Define $\Gamma.\mathsf{Cocone}(f,g, D)$ to be the object

    $$\Gamma.\mathsf{Cocone}(f,g, D) : \equiv \Gamma.[A,D].\Sigma(p_{[A,D]}^*[B,D], k^*\mathsf{Id}_{[C,D]})$$
    where $k$ is defined as the map
    $$\Gamma.[A,D].p^*_{[A,D]}[B,D] \xrightarrow{\mathsf{comp}_{C,A,D} (id_{\Gamma.[A,D]} \times_{\Gamma} f p_{[B,D]})   \times_{\Gamma} \mathsf{exch}_{[A,D], [B,D]} \mathsf{comp}_{C,B,D} (id_{\Gamma.[B,D]} \times_{\Gamma} g p_{[A,D]} )}   \Gamma.[C,D].p^*_{[C,D]}[C,D].$$

    Given an extension $\Gamma.Q$ and a section $(i_A, i_B, H) \colon \Gamma \to \Gamma.\mathsf{Cocone}(f,g, Q)$ we say that $\Gamma.Q$ is a \textit{homotopy pushout} of $f$ and $g$ if for any context extension $\Gamma.D$ and the induced canonical map
    
    $$h \colon \Gamma.[Q, D] \to \Gamma.p_{[Q, D]}^*\mathsf{Cocone}(f,g, D)$$ there is a section 
    $\Gamma \to \lambda(h)^*\mathsf{isEquiv}_{[Q,D], \mathsf{Cocone}(f,g,D)}.$ We say that a categorical model \textit{has all homotopy pushouts} if there is a homotopy pushout for any pair of sections $f \colon \Gamma \to \Gamma.[C,A]$ and $g \colon \Gamma \to \Gamma.[C,B]$.

\end{definition}

\section{Univalent Tribes}
\label{sect: univalent tribs}

Our goal in this section is to define the notion of a univalent fibration in a tribe, this will be the analogue of a univalent universe in type theory. For this, we will adapt the definition we gave for contextual categories to work in arbitrary tribes. 

Before we can start, we will prove the following lemma about homotopy monic fibrations that we will use on several occasions throughout this section. We can think of homotopy monic fibrations as those dependent types that are propositions.

    \begin{lemma}
    \label{lemma: different characterisations of homotopy monic fibration}
    
        Let $f \colon X \to Y$ be a fibration in a tribe $\cC$. The following are equivalent:
    
        \begin{enumerate}
            \item $f$ is a homotopy monomorphism.
            \item For any fibered path object $P_f$ of $f$ in $\cC/\!\!/Y$, the fibration $(\partial_0,\partial_1)$ is trivial.
            \item For any fibered path object $P_f$ of $f$, the fibration $(\partial_0,\partial_1)$ has a section.
            \item For any morphisms $g,h \colon A \to X$ such that $fg$ and $fh$ are fibrations, $fg = fh$ implies $g \sim h$.
    
        \end{enumerate}
    \end{lemma}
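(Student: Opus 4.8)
The plan is to prove the chain of implications $(1)\Leftrightarrow(2)\Rightarrow(3)\Rightarrow(4)\Rightarrow(1)$, using throughout the factorization of the fibered diagonal supplied by a fibered path object. Since $f$ is a fibration, the pullback $X\times_Y X$ of $f$ along $f$ exists, and a fibered path object $P_f$ of $(X,f)$ in $\cC/\!\!/Y$ is exactly a factorization $X \xrightarrow{\;r\;} P_f \xrightarrow{(\partial_0,\partial_1)} X\times_Y X$ of the diagonal $\Delta_f$ into an anodyne map $r$ followed by a fibration $(\partial_0,\partial_1)$. I will write $\partial_0,\partial_1\colon P_f \to X$ also for the two legs obtained by postcomposing with the projections $X\times_Y X \to X$.

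For $(1)\Leftrightarrow(2)$: because $f$ is a fibration, the strict pullback $X\times_Y X$ computes the homotopy pullback, so by definition $f$ is a homotopy monomorphism precisely when $\Delta_f$ is a homotopy equivalence. As $r$ is anodyne, hence a homotopy equivalence, the $2$-out-of-$3$ property gives that $\Delta_f$ is a homotopy equivalence iff $(\partial_0,\partial_1)$ is; and since $(\partial_0,\partial_1)$ is already a fibration, this is equivalent to its being a trivial fibration. Because $(1)$ makes no reference to a choice of path object, this argument simultaneously yields the quantifier ``for any'' appearing in $(2)$. The implication $(2)\Rightarrow(3)$ is then immediate, since every trivial fibration in a tribe admits a section.

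For $(3)\Rightarrow(4)$, suppose $(\partial_0,\partial_1)$ has a section $\sigma$, and let $g,h\colon A\to X$ satisfy $fg=fh$ with $fg$ a fibration. The equality $fg=fh$ induces $(g,h)\colon A\to X\times_Y X$, and then $\sigma(g,h)\colon A\to P_f$ is a fibered homotopy from $g$ to $h$, since $\partial_i\,\sigma(g,h)$ recovers the $i$-th projection of $(g,h)$. It remains to pass from a fibered homotopy over $Y$ to an ordinary homotopy in $\cC$: choosing an ordinary path object $X \xrightarrow{\,r'\,} PX \to X\times X$ of $X$ and lifting $r'$ along its endpoint fibration against the anodyne map $r$, I obtain a comparison map $P_f \to PX$ over $X\times X$, which carries $\sigma(g,h)$ to an ordinary homotopy $g\sim h$.

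Finally, for $(4)\Rightarrow(1)$ I would apply $(4)$ to the two projections $\pi_0,\pi_1\colon X\times_Y X\to X$, whose composites with $f$ coincide and are fibrations, obtaining $\pi_0\sim\pi_1$. I then claim $\pi_0$ is a homotopy inverse to $\Delta_f$: one has $\pi_0\Delta_f=1_X$ strictly, while $\Delta_f\pi_0\sim 1_{X\times_Y X}$ because these two maps into the pullback have equal first projection $\pi_0$ and homotopic second projections $\pi_0\sim\pi_1$. I expect the main obstacle to be precisely this last step: organizing the homotopy $\pi_0\sim\pi_1$ fiberedly over $Y$ and invoking the fact that a map into a pullback along a fibration is determined up to homotopy by compatible homotopies of its projections. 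This fibered-versus-ordinary homotopy bookkeeping is where the tribe axioms -- stability of anodyne maps under pullback along fibrations and the existence of fibered path objects, together with Joyal's lemmas on homotopy and pullbacks -- carry the argument.
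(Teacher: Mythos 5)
Your implications $(1)\Leftrightarrow(2)\Rightarrow(3)\Rightarrow(4)$ follow the same route as the paper: 2-out-of-3 applied to the factorisation $X \to P_f \to X\times_Y X$ of the fibered diagonal for $(1)\Leftrightarrow(2)$, the existence of sections of trivial fibrations for $(2)\Rightarrow(3)$, and composing a section of $(\partial_0,\partial_1)$ with $(g,h)$ for $(3)\Rightarrow(4)$ (your extra care in converting the fibered homotopy into an ordinary one, by lifting a path object of $X$ against the anodyne map $r$, is a detail the paper leaves implicit but is correct).

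The divergence is in how you close the cycle, and that is where your argument has a gap you yourself flag but do not resolve. You go $(4)\Rightarrow(1)$ by trying to show $\Delta_f\pi_0 \sim id_{X\times_Y X}$ directly from $\pi_0\sim\pi_1$; this needs a gluing lemma producing a homotopy of maps into a pullback from componentwise homotopies, and for that lemma to apply the homotopy $\pi_0\sim\pi_1$ must be \emph{fibered over $Y$} (constant after applying $f$) so that the two component homotopies are compatible over the cospan --- an arbitrary homotopy supplied by $(4)$ need not be of this form without a further lifting argument. The paper avoids all of this by splitting the step in two. First, $(4)\Rightarrow(3)$: the homotopy $\pi_0\sim\pi_1$, realised fiberwise, is \emph{literally} a section of $(\partial_0,\partial_1)\colon P_f\to X\times_Y X$, so nothing needs to be glued. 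Second, $(3)\Rightarrow(1)$: choosing a homotopy inverse $h$ of the anodyne map $i\colon X\to P_f$, a section $s$ gives a right homotopy inverse $hs$ of the diagonal, since $(id,id)hs=(\partial_0,\partial_1)ihs\sim(\partial_0,\partial_1)s=id$, while the projections are strict left inverses. I would adopt this decomposition: it replaces the componentwise-gluing step you identify as the main obstacle with a one-line computation using only that anodyne maps are homotopy equivalences.
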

    
    \begin{proof}
    $(1)\Leftrightarrow(2)$ follows from 2-out-of-3 in the diagram 
    \[\begin{tikzcd}
    	& {P_f} \\
    	X && {X \times_Y X}
    	\arrow["{(\partial_0,\partial_1)}", from=1-2, to=2-3]
    	\arrow["{\sim i}", from=2-1, to=1-2]
    	\arrow["{(id,id)}"', from=2-1, to=2-3]
    \end{tikzcd}\]
    defining the path object factorisation, since $f$ is homotopy monic if and only if $(id,id)$ is a homotopy equivalence. $(2) \Rightarrow (3)$ follows, since in a tribe trivial fibrations have sections by \cite[Cor. 3.4.7.]{joyal_notes_2017}. For $(3) \Rightarrow (1)$ choose a homotopy inverse $h$ of $i \colon  X \to P_f$ then any section $s$ of $(\partial_0,\partial_1)$ gives a right homotopy inverse $hs$ of $(id,id)$
    $$(id,id) hs = (\partial_0,\partial_1)ihs \sim (\partial_0,\partial_1)s \sim id$$
    and the pullback projections are always left inverses of $(id,id)$.

    For $(3) \Rightarrow (4)$, any section $s$ of $(\partial_0,\partial_1)$ is a homotopy $\pi_1 \sim \pi_2$ between the pullback projections $\pi_i \colon X \times_Y X \to X$:
    \[\begin{tikzcd}
    	& {P_f} \\
    	{X \times_Y X} && {X \times_Y X}
    	\arrow["{(\partial_0,\partial_1)}", from=1-2, to=2-3]
    	\arrow["s", from=2-1, to=1-2]
    	\arrow["{(\pi_1,\pi_2) = id}"', from=2-1, to=2-3]
    \end{tikzcd}\]
    Suppose now we are given maps $g_1,g_2 \colon Z \to X$ such that $h= fg_1 = fg_2$. Then, by the universal property of the pullback, there is a unique induced map $(g_1,g_2)\colon Z \to X \times_Y X$ such that 
    $$g_1 = \pi_1(g_1,g_2) \sim \pi_2(g_1,g_2) = g_2$$
    as required.
    
    For $(4) \Rightarrow (3)$ assume that $f$ satisfies $(4)$. Then, since $f\pi_1 = f\pi_2$ is some fibration, we get by (4) that $\pi_1 \sim \pi_2$. The homotopy witnessing that $\pi_1 \sim \pi_2$ gives a section of $(\partial_0,\partial_1)$.
    \end{proof}

\subsection{The Object of Equivalences}

In order to define the notion of a univalent fibration in a $\pi$-tribe, we will define an object of equivalences $\myuline{\mathsf{Eq}}_B(p_1,p_2)$ for fibrations $p_1 \colon  E_1 \to B$ and $p_2 \colon  E_2 \to B$, analogously to the definition of the context of equivalences $\Gamma.\mathsf{Equiv}(A,B)$ for contextual categories. We will first consider the case $B=1$ and then use the fact that $\cC/1 \cong \cC$ and that the fibrant slice of a tribe is again a tribe to apply the construction to $\cC/\!\!/B$ in order to obtain $\myuline{\mathsf{Eq}}_B(p_1,p_2)$. 

For this, we need to find an analogous definition for the map $\mathsf{comp}$ that works in any $\pi$-tribe. Here we use the fact that a $\pi$-tribe is cartesian closed, hence admits a self-enrichment via the usual internal composition maps. For objects $A, B$ and $C$, this morphism
$c_{A,B,C}\colon \uHom(B,C)  \times \uHom(A,B) \to \uHom(A,C)$ as is defined as the adjoint transpose of 
$$ \uHom(B,C) \times \uHom(A,B) \times A \xrightarrow{id \times \epsilon_{A,B} } \uHom(B,C) \times B \xrightarrow{\epsilon_{B,C}} C.$$
using the counit of the product-hom adjunction. This morphism makes the following diagram commute
\[\begin{tikzcd}
	{\mathsf{Hom}(X, \uHom(B,C) \times \uHom(A,B))} & { \mathsf{Hom}_{\cC/X}(X \times B, X \times C) \times \mathsf{Hom}_{\cC/X}(X \times A, X \times B)} \\
	{\mathsf{Hom}(X, \uHom(A,C))} & {\mathsf{Hom}_{\cC/X}(X \times A, X \times C)}
	\arrow["\cong", from=1-1, to=1-2]
	\arrow["{{(c_{A,B,C})}_*}", from=1-1, to=2-1]
	\arrow["{C_{A,B,C}}"', from=1-2, to=2-2]
	\arrow["\cong"', from=2-1, to=2-2]
\end{tikzcd}\]
where the map $C_{A,B,C}$ is the composition function. Thus, we have in particular that given maps $\overline{f} \colon X \to  \uHom(A,B) $ and $\overline{g} \colon X \to  \uHom(B,C)$ with transposes $f\colon X \times A \to X \times B$ and  $g\colon X \times B \to X \times C$ over $X$, the transpose of the composite $c_{A,B,C} (\overline{g}, \overline{f})$ is the composite $gf \colon X \times A \to X \times C$ over $X$. That is,
$$\overline{c_{A,B,C} (\overline{g}, \overline{f})} = gf.$$
For any object $A$ and any object $Z$ we will write $const_A$ to denote the unique map $Z \to 1 \to \uHom(A,A)$ that is induced by the transpose of the identity on $A$.

\begin{construction}
\label{constr: object of equiv}
Consider objects $A$ and $B$ in a $\pi$-tribe as well as path object factorisations $A \to P_A \to A \times A$ and $B \to P_B \to B \times B$. We define representations for left and right homotopy invertible morphisms as the pullbacks
\[\begin{tikzcd}[every label/.append style={outer sep=0.2cm}]
	{\underline{\mathsf{RInv}}(A,B)} & {\underline{\mathsf{Hom}}(B,P_B)} \\
	{\underline{\mathsf{Hom}}(A,B) \times \underline{\mathsf{Hom}}(B,A)} & {\underline{\mathsf{Hom}}(B,B) \times \underline{\mathsf{Hom}}(B,B)}
	\arrow[from=1-1, to=1-2]
	\arrow[from=1-1, to=2-1]
	\arrow[two heads, from=1-1, to=2-1]
	\arrow["{({\partial_0}_*,{\partial_1}_*)}", two heads, from=1-2, to=2-2]
	\arrow[""{name=0, anchor=center, inner sep=0}, "{(c_{B,A,B}, const_B)}"', from=2-1, to=2-2]
	\arrow["\lrcorner"{anchor=center, pos=0.125}, draw=none, from=1-1, to=0]
\end{tikzcd}\]
\[\begin{tikzcd}[every label/.append style={outer sep=0.2cm}]
	{\underline{\mathsf{LInv}}(A,B)} & {\underline{\mathsf{Hom}}(A,P_A)} \\
	{\underline{\mathsf{Hom}}(B,A) \times \underline{\mathsf{Hom}}(A,B)} & {\underline{\mathsf{Hom}}(A,A) \times \underline{\mathsf{Hom}}(A,A)}
	\arrow[from=1-1, to=1-2]
	\arrow[from=1-1, to=2-1]
	\arrow[two heads, from=1-1, to=2-1]
	\arrow["{({\partial_0}_*,{\partial_1}_*)}", two heads, from=1-2, to=2-2]
	\arrow[""{name=0, anchor=center, inner sep=0}, "{(c_{A,B,A},const_A)}"', from=2-1, to=2-2]
	\arrow["\lrcorner"{anchor=center, pos=0.125}, draw=none, from=1-1, to=0]
\end{tikzcd}\]
The map $({\partial_0}_*,{\partial_1}_*) \cong (\partial_0,\partial_1)_*$ is a fibration because it is the value of the internal hom functor on the fibration $(\partial_0,\partial_1)$ which preserves fibrations by the $\pi$-tribe axioms. Composing $\myuline{\mathsf{RInv}}(A,B)$ and $\myuline{\mathsf{LInv}}(A,B)$ with the obvious projections (which are fibrations by all objects in a tribe being fibrant) and taking a pullback
\[\begin{tikzcd}
	{\myuline{\mathsf{Eq}}(A,B)} & {\myuline{\mathsf{LInv}}(A,B)} \\
	{\myuline{\mathsf{RInv}}(A,B)} & {\uHom(A,B)}
	\arrow[two heads, from=2-1, to=2-2]
	\arrow[two heads, from=1-2, to=2-2]
	\arrow[two heads, from=1-1, to=2-1]
	\arrow[two heads, from=1-1, to=1-2]
	\arrow["\lrcorner"{anchor=center, pos=0.125}, draw=none, from=1-1, to=2-2]
\end{tikzcd}\]
we obtain our object of equivalences $\myuline{\mathsf{Eq}}(A,B)$.

\end{construction}

\begin{lemma}
\label{lemma: transpose of morphism into object of equivalences is equivalence}
    A map $\overline{e}\colon X \to \myuline{\mathsf{Eq}}(A,B)$ corresponds to maps $e \colon X \times A \to X \times B$, $g,h\colon X \times B \to X \times A$ , $H \colon X \times A \to X \times P_A$ and $K \colon X \times B \to X \times P_B$ all over $X$ such that 
    $$H \colon ge \sim id$$
    $$K \colon eh \sim id.$$
    Hence, $e$ is a homotopy equivalence.
\end{lemma}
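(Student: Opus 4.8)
The plan is to prove the statement in two stages: first establish the claimed correspondence by unwinding the three pullbacks that define $\myuline{\mathsf{Eq}}(A,B)$, and then deduce that $e$ is a homotopy equivalence from the abstract fact that a map possessing both a left and a right homotopy inverse is invertible up to homotopy.

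First I would unwind the universal properties. A map $\overline{e}\colon X \to \myuline{\mathsf{Eq}}(A,B)$ is, by the defining pullback, a pair of maps $X \to \myuline{\mathsf{RInv}}(A,B)$ and $X \to \myuline{\mathsf{LInv}}(A,B)$ whose composites to $\uHom(A,B)$ agree. Unwinding the pullback defining $\myuline{\mathsf{RInv}}(A,B)$, such a map amounts to maps $\overline{e}\colon X\to\uHom(A,B)$, $\overline{h}\colon X\to\uHom(B,A)$ and $\overline{K}\colon X\to\uHom(B,P_B)$ subject to the condition imposed by $(c_{B,A,B},const_B)$ on one side and $({\partial_0}_*,{\partial_1}_*)$ on the other. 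Transposing along the product–hom adjunction and using the identities $\overline{c_{B,A,B}(\overline{e},\overline{h})}=eh$ and $\overline{const_B}=id$ recorded before the construction, together with the fact that ${\partial_i}_*$ transposes to post-composition with $\partial_i$, this condition reads $\partial_0 K = eh$ and $\partial_1 K = id$. The analogous analysis of $\myuline{\mathsf{LInv}}(A,B)$ yields $\overline{g}\colon X\to\uHom(B,A)$ and $\overline{H}\colon X\to\uHom(A,P_A)$ with $\partial_0 H = ge$ and $\partial_1 H = id$, and the agreement of the two legs on $\uHom(A,B)$ forces the same $e$ to appear on both sides.

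The one point requiring care is the interpretation of $K$ and $H$ as genuine homotopies in the fibrant slice $\cC/\!\!/X$. For this I would observe that base change along $X \to 1$, i.e. the functor $X\times(-)$, preserves anodyne maps and fibrations, so that applying it to the path object factorisation $B \to P_B \to B\times B$ exhibits $X\times P_B \to (X\times B)\times_X(X\times B)$ as a fibered path object of $X\times B$ in $\cC/\!\!/X$, and similarly for $X\times P_A$. Hence $K\colon X\times B \to X\times P_B$ with $\partial_0 K = eh$ and $\partial_1 K = id$ is exactly a homotopy $K\colon eh\sim id$ over $X$, and $H$ is a homotopy $H\colon ge\sim id$ over $X$; this completes the correspondence.

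Finally, to see that $e$ is a homotopy equivalence I would use that homotopy is a congruence on $\cC/\!\!/X$ (an equivalence relation compatible with composition, since $\cC/\!\!/X$ is again a tribe). Then $g \sim g\cdot id \sim g(eh) = (ge)h \sim id\cdot h = h$, so the left inverse $g$ and the right inverse $h$ are homotopic; consequently $eg \sim eh \sim id$, while $ge \sim id$ already holds, so $g$ is a two-sided homotopy inverse of $e$ and $e$ is a homotopy equivalence in $\cC/\!\!/X$. I expect the main obstacle to be purely bookkeeping: getting every transpose and every pullback condition to come out with the correct variance, and confirming that $X\times P_{(-)}$ really is a path object in the slice. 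The equivalence argument itself is short once congruence of homotopy is invoked.
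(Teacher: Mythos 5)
Your proof is correct and follows essentially the same route as the paper: unwind the pullbacks and transpose along the product--hom adjunction to read off $\partial_0'K = eh$, $\partial_1'K = id$, $\partial_0'H = ge$, $\partial_1'H = id$, using that $X\times P_A$ and $X\times P_B$ are path objects over $X$. You additionally spell out the standard step that a left and a right homotopy inverse combine into a two-sided one ($g\sim g(eh)=(ge)h\sim h$), which the paper leaves implicit; that is a harmless and correct elaboration.
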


\begin{proof}
    The maps $e,g,h,H$ and $K$ are given by the universal property of the pullbacks and transposition. Since pullback preserves path objects $X \times P_A$ is a path object for $X \times A$ over $X$ and similar $X \times P_B$ is a path object for $X \times B$ over $X$. Both path objects are equipped with projections $\partial_i' := X \times \partial_i$. The transpose of ${\partial_i}_*\overline{H}$ is $\partial_i' H$ and similar for $K$. 
    
    By definition of the composition map, the transpose of the composite $c_{A,B,A}(\overline{g},\overline{e})$ is $ge$ and the transpose of the composite  $c_{B,A,B}(\overline{e},\overline{h})$ is $eh$, hence $\partial_0' H = ge$ and $\partial_0'K = eh$. The transpose of the constant identity $X \to 1 \to \uHom(A,A)$ is the identity $X \times A \to X \times A$ and similar for $B$, thus $\partial_1'H = id$ and $\partial_0'K = id$ which proves the claim.
\end{proof}

In particular, points $1 \to \myuline{\mathsf{Eq}}(A,B)$ correspond to homotopy equivalences $A \to B$. We therefore have 

\begin{proposition}
\label{prop-Eq(-xA,B)-is-representable}
    The functor $\mathsf{Eq}_X(- \times A, - \times  B)\colon \cC^{op} \to \Set$ sending $X \in \cC$ to the set $\mathsf{Eq}_X(X \times A,X \times B)$ of homotopy equivalences over $X$ together with their homotopy inverses and witnessing homotopies for fixed path objects for $A$ and $B$ is representable in a tribe such that we have 
    $$\mathsf{Hom}(X, \myuline{\mathsf{Eq}}(A,B)) \cong \mathsf{Eq}_{X}(X \times A, X \times B ).$$
\end{proposition}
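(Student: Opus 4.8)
The plan is to produce a natural bijection witnessing that $\myuline{\mathsf{Eq}}(A,B)$ from Construction \ref{constr: object of equiv} represents the functor. One half of the correspondence is already available: Lemma \ref{lemma: transpose of morphism into object of equivalences is equivalence} shows that any $\overline{e}\colon X \to \myuline{\mathsf{Eq}}(A,B)$ unpacks into a tuple $(e,g,h,H,K)$ of morphisms over $X$ with $H\colon ge \sim id$ and $K\colon eh \sim id$, i.e. precisely an element of $\mathsf{Eq}_X(X\times A, X\times B)$. I would therefore define the forward map $\Phi_X\colon \mathsf{Hom}(X, \myuline{\mathsf{Eq}}(A,B)) \to \mathsf{Eq}_X(X\times A, X\times B)$ by this extraction.

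For the inverse $\Psi_X$, given equivalence data $(e,g,h,H,K)$ over $X$ I would transpose each morphism across the product--hom adjunction to obtain $\overline{e}\colon X \to \uHom(A,B)$, maps $\overline{g},\overline{h}\colon X \to \uHom(B,A)$, and the transposed homotopies $\overline{H}\colon X \to \uHom(A,P_A)$ and $\overline{K}\colon X \to \uHom(B,P_B)$. The key point is to feed these into the three pullback squares of the construction. Using the compatibility of the internal composition map with transposition recorded before Construction \ref{constr: object of equiv}, the transpose of $c_{B,A,B}(\overline{e},\overline{h})$ is $eh$, which matches the transpose of ${\partial_0}_*\overline{K}$, while $\overline{const_B} = id$ matches that of ${\partial_1}_*\overline{K}$; hence $(\overline{e},\overline{h})$ and $\overline{K}$ agree on the cospan defining $\myuline{\mathsf{RInv}}(A,B)$, and its universal property yields a unique map $X \to \myuline{\mathsf{RInv}}(A,B)$. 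Symmetrically $(\overline{g},\overline{e})$ together with $\overline{H}$ induce a unique map $X \to \myuline{\mathsf{LInv}}(A,B)$. As both project to $\overline{e}$ in $\uHom(A,B)$, the universal property of the final pullback produces a unique $X \to \myuline{\mathsf{Eq}}(A,B)$, and this is $\Psi_X$ of the tuple.

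I would then check that $\Phi_X$ and $\Psi_X$ are mutually inverse. This is forced by the uniqueness clauses in the universal properties of all three pullbacks together with the bijectivity of transposition: a map into a pullback is determined by its composites with the projections, and those composites are exactly the transposes of the data recorded by $\Phi_X$, so composing the two assignments in either order returns the same morphisms. Naturality in $X$ is the remaining point: for $u\colon Y \to X$, base change along $u$ sends the tuple $(e,g,h,H,K)$ to its restriction, using $(X\times A)\times_X Y \cong Y\times A$ and the fact that the fixed path object $X\times P_A$ restricts to $Y\times P_A$ by pullback-stability of path objects; this simultaneously verifies that $\mathsf{Eq}_X(-\times A,-\times B)$ is genuinely a contravariant functor. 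Since transposition and the internal composition maps are natural, $\Psi$ (equivalently $\Phi$) commutes with precomposition by $u$.

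I expect the main obstacle to be the bookkeeping in the inverse construction, specifically confirming that the transposed homotopies $\overline{H}$ and $\overline{K}$ land in the required pullbacks. This rests entirely on the transposition identity $\overline{c_{A,B,C}(\overline{g},\overline{f})} = gf$ together with the identification of $X\times P_A$, $X\times P_B$ as path objects over $X$; once these are in place the remaining verifications are formal consequences of the universal properties.
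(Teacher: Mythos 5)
Your proposal is correct and follows the same route as the paper: the paper's proof is simply ``Immediate from Lemma \ref{lemma: transpose of morphism into object of equivalences is equivalence}'', whose own proof already establishes the bijective correspondence via adjoint transposition and the universal properties of the three pullbacks in Construction \ref{constr: object of equiv}. You have merely spelled out the inverse assignment, the mutual-inverse check, and the naturality in $X$ that the paper leaves implicit.
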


\begin{proof}
    Immediate from Lemma \ref{lemma: transpose of morphism into object of equivalences is equivalence}.
\end{proof}

    We get the local version of the statement by the fact that fibrant slices are again tribes and applying the statement to the tribe $\cC/\!\!/B$.
    
\begin{corollary}
\label{cor: representation of functor object of equivalences in tribe}
    Let $p_1\colon E_1 \to B$, $p_1\colon E_2 \to B$ be fibrations in a tribe, then the functor $\cC/B^{op} \to \Set$ sending $f\colon X \to B$  to $\mathsf{Eq}_X(f^*p_1,f^*p_2)$ is representable by an object $\myuline{\mathsf{Eq}}_B(p_1,p_2)$ such that
    
    \[\pushQED{\qed} 
    \mathsf{Hom}_{\cC/B}(f, \myuline{\mathsf{Eq}}_B(p_1,p_2)) \cong \mathsf{Eq}_X(f^*p_1,f^*p_2).\qedhere
    \popQED\]
    \end{corollary}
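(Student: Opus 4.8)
The plan is to apply Proposition \ref{prop-Eq(-xA,B)-is-representable} verbatim, but inside the fibrant slice $\cC/\!\!/B$ rather than inside $\cC$ itself. Since the fibrant slice of a ($\pi$-)tribe is again a ($\pi$-)tribe, with terminal object $\mathrm{id}_B$, fibrations exactly those maps that are fibrations in $\cC$, and binary products given by pullback over $B$, the two fibrations $p_1$ and $p_2$ become ordinary objects of the tribe $\cC/\!\!/B$. Running Construction \ref{constr: object of equiv} on them there produces an object $\myuline{\mathsf{Eq}}_B(p_1,p_2)$ of $\cC/\!\!/B$, i.e. a fibration over $B$, and Proposition \ref{prop-Eq(-xA,B)-is-representable} then yields, for every object $g$ of $\cC/\!\!/B$, a natural bijection $\mathsf{Hom}_{\cC/\!\!/B}(g, \myuline{\mathsf{Eq}}_B(p_1,p_2)) \cong \mathsf{Eq}_g(g\times E_1, g\times E_2)$, where the product is computed in $\cC/\!\!/B$.

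It then remains to translate the right-hand side into the data over the base asserted in the statement. First I would invoke the standard identification of iterated fibrant slices $(\cC/\!\!/B)/\!\!/g \cong \cC/\!\!/Y$ for a fibration $g\colon Y\to B$; this is an isomorphism of tribes, hence preserves path objects, homotopies and homotopy equivalences, and it sends the product $g\times E_i = Y\times_B E_i$ to the pullback $g^*p_i$ over $Y$. Choosing the fixed path objects for $E_1,E_2$ in $\cC/\!\!/B$ and pulling them back along $g$ — legitimate because pullback preserves path objects, exactly as in the proof of Lemma \ref{lemma: transpose of morphism into object of equivalences is equivalence} — identifies $\mathsf{Eq}_g(g\times E_1, g\times E_2)$ with $\mathsf{Eq}_Y(g^*p_1,g^*p_2)$. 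Combining this with the fact that $\cC/\!\!/B$ is a full subcategory of $\cC/B$ gives the desired bijection $\mathsf{Hom}_{\cC/B}(g,\myuline{\mathsf{Eq}}_B(p_1,p_2))\cong \mathsf{Eq}_Y(g^*p_1,g^*p_2)$ for every \emph{fibration} $g$.

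Finally I would extend the bijection from fibrations to an arbitrary $f\colon X\to B$, which is the only step needing genuine care, since the objects of the tribe $\cC/\!\!/B$ are just the fibrations and Proposition \ref{prop-Eq(-xA,B)-is-representable} does not apply to $f$ directly. Instead I would use that $\myuline{\mathsf{Eq}}_B(p_1,p_2)$ is assembled by Construction \ref{constr: object of equiv} as a pullback of internal-hom objects of the form $\uHom_B(E_i,E_j)$ and $\uHom_B(E_i,P_{E_i})$ in $\cC/\!\!/B$, and that for each such block the partial adjunction $p_i^* \dashv \Pi_{p_i}$ of Definition \ref{def_dep_prod} supplies a transposition bijection $\mathsf{Hom}_{\cC/B}(f,\uHom_B(E_i,E_j)) \cong \mathsf{Hom}_{\cC/X}(f^*E_i, f^*E_j)$ that is valid for every object $f$ of $\cC/B$, not only for fibrations: the right adjoint $\Pi_{p_i}$ is defined only on fibrations, but the adjunction isomorphism itself holds in the remaining variable. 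Since $\mathsf{Hom}_{\cC/B}(f,-)$ carries the defining pullback of $\myuline{\mathsf{Eq}}_B(p_1,p_2)$ to the corresponding pullback of sets, the same bookkeeping as in Lemma \ref{lemma: transpose of morphism into object of equivalences is equivalence} reassembles the transposed data into exactly an equivalence $f^*p_1\to f^*p_2$ over $X$ with its inverses and witnessing homotopies, i.e. an element of $\mathsf{Eq}_X(f^*p_1,f^*p_2)$, with naturality in $f$ immediate from naturality of the adjunctions. The main obstacle is therefore not any single hard estimate but the careful check that every ingredient of the slice-level construction — internal homs, products, path objects, and the resulting set of equivalences — is compatible with base change along an \emph{arbitrary} $f$, so that representability holds over all of $\cC/B$ and not merely over the fibrant slice.
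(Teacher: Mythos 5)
Your proof is correct and takes the same route as the paper, which justifies the corollary in a single sentence by applying Proposition \ref{prop-Eq(-xA,B)-is-representable} to the fibrant slice $\cC/\!\!/B$ and stops there. Your third paragraph --- extending the bijection from fibrations over $B$ to an arbitrary object $f$ of $\cC/B$ by using that the partial adjunction $p_i^*\dashv\Pi_{p_i}$ is natural in the unrestricted variable and that $\mathsf{Hom}_{\cC/B}(f,-)$ preserves the defining pullbacks --- fills in a point the paper passes over in silence, and it is the right way to justify the statement as literally formulated over all of $\cC/B$ rather than only over $\cC/\!\!/B$.
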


Next, we show that this construction does not depend on the choice of path ojects.

\begin{lemma}
    \label{lemma: object of equivalences is independent of choice of of path object}
    Let $\cC$ be a $\pi$-tribe and $A$ and $B$ be objects in $\cC$. Then if $P_A, P_A'$ and $P_B, P_B'$ are path objects of $A$ and $B$ respectively, any two objects of equivalences $\myuline{\mathsf{Eq}}(A,B)$ and $\myuline{\mathsf{Eq}}(A,B)'$ obtained from applying Construction \ref{constr: object of equiv} to $P_A, P_B$ and $P_A', P_B'$ respectively are homotopy equivalent. That is, the object of equivalences is unique up to homotopy equivalence.
\end{lemma}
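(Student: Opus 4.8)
The plan is to produce, from comparison maps between the chosen path objects, a compatible system of homotopy equivalences between the intermediate objects $\myuline{\mathsf{RInv}}$, $\myuline{\mathsf{LInv}}$ and finally $\myuline{\mathsf{Eq}}$, exploiting that every object in Construction \ref{constr: object of equiv} is built by pullback along fibrations and that such pullbacks are homotopy invariant.

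First I would compare the path objects themselves. Since a path object $A \to P_A \twoheadrightarrow A\times A$ arises from the factorisation axiom, the section $i\colon A \to P_A$ is anodyne, so I can lift $i$ against the fibration $(\partial_0',\partial_1')\colon P_A' \twoheadrightarrow A\times A$ in the square whose remaining sides are $i'\colon A \to P_A'$ and $(\partial_0,\partial_1)\colon P_A \to A\times A$ (which commutes, as both composites equal the diagonal). This yields a map $\phi_A\colon P_A \to P_A'$ with $(\partial_0',\partial_1')\phi_A = (\partial_0,\partial_1)$ and $\phi_A i = i'$. By 2-out-of-3, $\phi_A$ is a homotopy equivalence, and by construction it lies over $A\times A$. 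The same argument gives $\phi_B\colon P_B \to P_B'$ over $B\times B$.

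Next I apply the internal hom functors. Because $\cC$ is a $\pi$-tribe, $\uHom(B,-)$ preserves fibrations and homotopy equivalences, so $\uHom(B,\phi_B)\colon \uHom(B,P_B)\to\uHom(B,P_B')$ is a homotopy equivalence commuting with $({\partial_0}_*,{\partial_1}_*)$, hence a homotopy equivalence over $\uHom(B,B)\times\uHom(B,B)$; likewise $\uHom(A,\phi_A)$ over $\uHom(A,A)\times\uHom(A,A)$. Together with identities on the remaining legs, these assemble into maps of the cospans defining $\myuline{\mathsf{RInv}}(A,B)$ and $\myuline{\mathsf{LInv}}(A,B)$ as pullbacks. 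The crucial homotopy-invariance input is that base change along an arbitrary map, regarded as a functor between fibrant slices, is exact and therefore preserves homotopy equivalences between fibrations by Ken Brown's lemma (every object of a tribe being both fibrant and cofibrant). Applying this to the pullbacks along $(c_{B,A,B},const_B)$ and $(c_{A,B,A},const_A)$ produces homotopy equivalences $\myuline{\mathsf{RInv}}(A,B)\to\myuline{\mathsf{RInv}}(A,B)'$ and $\myuline{\mathsf{LInv}}(A,B)\to\myuline{\mathsf{LInv}}(A,B)'$, both over $\uHom(A,B)$.

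Finally, $\myuline{\mathsf{Eq}}(A,B)$ is the pullback of $\myuline{\mathsf{RInv}}(A,B)\twoheadrightarrow\uHom(A,B)\twoheadleftarrow\myuline{\mathsf{LInv}}(A,B)$ along two fibrations, hence a homotopy pullback; the two comparison maps constitute a levelwise homotopy equivalence of this cospan that is the identity on the base $\uHom(A,B)$, so the gluing lemma for homotopy pullbacks yields the desired homotopy equivalence $\myuline{\mathsf{Eq}}(A,B)\to\myuline{\mathsf{Eq}}(A,B)'$. The main obstacle is precisely this homotopy invariance: one must check that the comparison maps genuinely live over the bases along which the defining pullbacks are taken (which is why tracking compatibility with the $\partial$-projections in the first two steps matters), and then invoke the stability of homotopy equivalences between fibrations under base change. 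Once that is in place, the result follows formally.
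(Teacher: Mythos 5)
Your proposal is correct and follows essentially the same route as the paper's proof: compare the path objects by lifting the anodyne section against the other path fibration, push through the internal hom (which preserves fibrations and homotopy equivalences by the $\pi$-tribe axioms), and then use exactness of base change between fibrant slices to propagate the equivalences through the defining pullbacks of $\myuline{\mathsf{RInv}}$, $\myuline{\mathsf{LInv}}$ and finally $\myuline{\mathsf{Eq}}$. You merely spell out some steps the paper leaves implicit (the explicit lifting square and the Ken Brown argument for homotopy invariance of the pullbacks).
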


\begin{proof}

    By lifting, $P_A \simeq P_A'$ and $P_B \simeq P_B'$. By the $\pi$-tribe axioms, $\uHom(A,P_A) \simeq \uHom(A,P_A')$ and similarly $\uHom(B,P_B) \simeq \uHom(B,P_B')$. Since for any morphism the induced pullback functor between fibrant slices is always exact, we see that pulling back along $(c_{B,A,B},const_B)$ in 
\[\begin{tikzcd}
	{\myuline{\mathsf{RInv}}(A,B)} & {\myuline{\mathsf{RInv}}(A,B)'} & {\myuline{\mathsf{Hom}}(B,P_B)} & {\myuline{\mathsf{Hom}}(B,P_B')} \\
	{\myuline{\mathsf{Hom}}(A,B) \times \myuline{\mathsf{Hom}}(B,A)} &&& {\myuline{\mathsf{Hom}}(B,B) \times \myuline{\mathsf{Hom}}(B,B)}
	\arrow["\sim", from=1-1, to=1-2]
	\arrow[two heads, from=1-1, to=2-1]
	\arrow[two heads, from=1-2, to=2-1]
	\arrow["\sim", from=1-3, to=1-4]
	\arrow[two heads, from=1-3, to=2-4]
	\arrow[two heads, from=1-4, to=2-4]
	\arrow["{(c_{B,A,B},const_B)}"', from=2-1, to=2-4]
\end{tikzcd}\]
induces a homotopy equivalence $\myuline{\mathsf{RInv}}(A,B) \simeq \myuline{\mathsf{RInv}}(A,B)'$ and similarly we get $\myuline{\mathsf{LInv}}(A,B) \simeq \myuline{\mathsf{LInv}}(A,B)'$. Repeating the same argument, we get $\myuline{\mathsf{Eq}}(A,B) \simeq \myuline{\mathsf{Eq}}(A,B)'$.
\end{proof}

\begin{lemma}
\label{lemma: precomposition with homotopy equivalence is homotopy equivalence}
    Let $f \colon X \times A \to X \times B$ be a homotopy equivalence over $X$ in a $\pi$-tribe, $\overline{f}\colon X \to \uHom(A,B)$ be its adjoint transpose and $C$ be any object. Then, the composite
    $$X \times \uHom(C,A) \xrightarrow{(\pi_X, c_{C,A,B}(\overline{f} \times id_{\uHom(C,A)}))} X \times \uHom(C,B)$$
    postcomposition map denoted as $f_*$ is a homotopy equivalence over $X$.
\end{lemma}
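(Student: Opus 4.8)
The plan is to recognise $f_*$ as the value, on the morphism $f$, of an internal-hom functor in the fibrant slice $\cC/\!\!/X$, and then to read off that it preserves homotopy equivalences directly from the $\pi$-tribe axioms. I would work throughout in the tribe $\cC/\!\!/X$, where by definition a ``homotopy equivalence over $X$'' is just a homotopy equivalence. The product projection $p\colon X \times C \to X$ is a fibration, being the pullback of $C \to 1$ along $X \to 1$, so $X \times C$ is an object of $\cC/\!\!/X$ and we have the partial adjunction $p^* \dashv \Pi_p$ with $p^*\colon \cC/\!\!/X \to \cC/\!\!/(X\times C)$ and $\Pi_p\colon \cC/\!\!/(X\times C) \to \cC/\!\!/X$. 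Unwinding how the internal hom is computed in a slice, the object $X \times \uHom(C,A)$, regarded as a fibration over $X$, is exactly $\Pi_p\,p^*(X \times A)$, i.e. the internal mapping object $\uHom_{\cC/\!\!/X}(X\times C,\, X\times A)$; likewise $X \times \uHom(C,B) = \Pi_p\,p^*(X\times B)$.

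Next I would identify $f_*$ with $\uHom_{\cC/\!\!/X}(X \times C, f) = \Pi_p\,p^*(f)$, the action of this internal-hom functor on the morphism $f$ of $\cC/\!\!/X$. This is the crux, and it amounts to matching the concrete description of $f_*$ through the composition map $c_{C,A,B}$ against the abstract postcomposition. Concretely: for a generalised point $(z_X, \overline{\psi})\colon Z \to X \times \uHom(C,A)$, with $\psi\colon Z \times C \to Z\times A$ the transpose of $\overline{\psi}$ over $Z$, the $\uHom(C,B)$-component of $f_*\circ(z_X,\overline\psi)$ is $c_{C,A,B}(\overline{f}\,z_X,\, \overline\psi)$, whose transpose is the fibred composite $(z_X^* f)\circ \psi$ (using the defining property of the composition maps established before Construction \ref{constr: object of equiv}, together with the fact that the transpose of $\overline f\, z_X$ is $z_X^* f$). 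This is precisely postcomposition by $f$ in $\cC/\!\!/X$, confirming $f_* = \Pi_p\, p^*(f)$.

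Finally I would conclude. The functor $p^*$ is pullback along a fibration, hence exact, and so preserves homotopy equivalences; and $\Pi_p$ preserves homotopy equivalences by the defining axiom of a $\pi$-tribe (Definition \ref{def_dep_prod}). Since $f$ is a homotopy equivalence in $\cC/\!\!/X$ by hypothesis, its image $f_* = \Pi_p\, p^*(f)$ is again a homotopy equivalence over $X$, which is the claim.

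I expect the main obstacle to be the bookkeeping in the second paragraph: verifying carefully that the explicit map built from $c_{C,A,B}$ and $\overline f$ really coincides with the abstract internal-hom postcomposition $\Pi_p\,p^*(f)$, keeping track of the fibring over $X$ and of adjoint transposition. Once this identification is secured, the homotopy-theoretic content reduces entirely to the two preservation properties, both immediate from the $\pi$-tribe structure. (Alternatively, one could avoid the slice identification and argue by hand: choose a homotopy inverse $g$ of $f$ over $X$, show postcomposition is functorial so that $g_*f_* = (gf)_*$ and $f_*g_* = (fg)_*$, and show it sends the homotopies $gf\sim\mathrm{id}$, $fg\sim\mathrm{id}$ to homotopies $(gf)_*\sim\mathrm{id}$, $(fg)_*\sim\mathrm{id}$ by postcomposing with a path $X\times A \to X\times P_A$, using that $\uHom(C,-)$ preserves path objects; but the slice-tribe route is cleaner.)
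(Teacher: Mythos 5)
Your proof is correct, but it takes a genuinely different route from the paper's. The paper argues by hand: it chooses a two-sided homotopy inverse $g$ of $f$ over $X$, forms the analogous map $g_*$ from $c_{C,B,A}$ and $\overline{g}$, and verifies $g_*f_* \sim id$ and $f_*g_* \sim id$ by computing the adjoint transpose of the second component (reducing via Joyal's componentwise criterion and via the compatibility of the homotopy relation with transposition from Lemma \ref{lemma: different characterisations of function extensionality}) — essentially the parenthetical alternative you sketch at the end. Your main argument instead identifies $f_*$ with $\Pi_p\,p^*(f)$ for $p \colon X \times C \to X$, i.e.\ with the action of the internal hom $\uHom_{\cC/\!\!/X}(X\times C,-)$ on $f$, and then quotes that $p^*$ is exact and that $\Pi_p$ preserves homotopy equivalences by the $\pi$-tribe axiom (Definition \ref{def_dep_prod}). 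This is conceptually cleaner and makes the homotopical content a one-line consequence of the axioms, at the cost of the Yoneda-style bookkeeping you flag: one must check that $X \times \uHom(C,A)$ is canonically isomorphic over $X$ to $\Pi_p\,p^*(X\times A)$ and that under this isomorphism the concrete map built from $c_{C,A,B}$ really is $\Pi_p\,p^*(f)$ (your generalised-point computation, using the defining property of $c_{A,B,C}$ and naturality of transposition, does establish this). The paper's computation is more elementary and self-contained but requires producing explicit inverse homotopies; yours trades that for a structural identification that would also immediately give functoriality of $f \mapsto f_*$.
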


\begin{proof}
    We may assume $f$ has a two-sided homotopy inverse $g \colon  X \times B \to X \times A$ over $X$ which has as transpose a map $\overline{g} \colon  X \to \uHom(B,A)$. This allows us to define a precomposition map $g_*$ as the composite
    $$X \times \uHom(C,B) \xrightarrow{(\pi_X,  c_{C,B,A} ( \overline{g} \times id_{\uHom(C,B)}))} X \times \uHom(C,A).$$
    We claim $f_*$ and $g_*$ are homotopy inverses of each other. To show that $g_*f_*$ is homotopic to the identity $id_{X \times \uHom(C,A)} = (\pi_X, \pi_{ \uHom(C,A)})$, it suffices by \cite[Lem. 3.3.14]{joyal_notes_2017} to show that the second component of the composite $g_*f_*$
    $$X \times \uHom(C,A) \xrightarrow{(\pi_X,c_{C,B,A}(\overline{g}\pi_X, c_{C,A,B}  (\overline{f} \times id_{\uHom(C,A)} ))} X \times \uHom(C,A)$$
    is homotopic to the projection $ \pi_{ \uHom(C,A)} \colon  X \times \uHom(C,A) \to \uHom(C,A)$. By Lemma \ref{lemma: different characterisations of function extensionality}, it suffices to show that the transpose of this map is homotopic to the transpose of $\pi_{ \uHom(C,A)}$. We compute
    \begin{align*}
         \overline{c_{C,B,A}(\overline{g}\pi_X, c_{C,A,B}  (\overline{f} \times id_{\uHom(C,A)} ))} = \; & ( g \times id_{\uHom(C,A)})\overline{c_{C,A,B} (\overline{f}\pi_X,\pi_{\uHom(C,A)})} \\[1em]
        =  \; & ( g \times id_{\uHom(C,A)}) ( f \times id_{\uHom(C,A)}) \overline{\pi_{\uHom(C,A)}} \\[1em]
        = \; &  ( gf \times id_{\uHom(C,A)}) \overline{\pi_{\uHom(C,A)}}.
    \end{align*}
     But $gf \sim id$. So this map is homotopic to $\overline{\pi_{\uHom(C,A)}}$ which is the transpose of the projection as desired. The case of $f_*g_*$ is analogous.
\end{proof}

\begin{corollary}
    Dually, if $f$ is homotopy equivalence, then the pre-composition map $f^*$ given by $$(\pi_X,c_{A,B,C} (id_{\uHom(B,A)} \times \overline{f}))$$ is a homotopy equivalence.
\qed\end{corollary}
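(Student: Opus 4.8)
The plan is to run the mirror image of the proof of Lemma \ref{lemma: precomposition with homotopy equivalence is homotopy equivalence}, interchanging the two slots of the internal composition maps. (Note in passing that the subscript in the statement, $\uHom(B,A)$, should read $\uHom(B,C)$, the first slot of $c_{A,B,C}$, so that $f^*$ is the map $X \times \uHom(B,C) \to X \times \uHom(A,C)$.) As in the lemma, I may assume $f$ has a two-sided homotopy inverse $g \colon X \times B \to X \times A$ over $X$, with adjoint transpose $\overline{g} \colon X \to \uHom(B,A)$. I would then define the candidate inverse of $f^*$ to be the precomposition map
\[
X \times \uHom(A,C) \xrightarrow{(\pi_X,\, c_{B,A,C}(id_{\uHom(A,C)} \times \overline{g}))} X \times \uHom(B,C),
\]
denoted $g^*$, and claim that $f^*$ and $g^*$ are mutually homotopy inverse over $X$.

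To prove $g^* f^* \sim id_{X \times \uHom(B,C)}$, I would, exactly as in the lemma, appeal to \cite[Lem. 3.3.14]{joyal_notes_2017} to reduce the claim to its second component, and then use Lemma \ref{lemma: different characterisations of function extensionality} to reduce further to comparing adjoint transposes over $X$. The second component of $g^* f^*$ is $c_{B,A,C}(c_{A,B,C}(\pi_{\uHom(B,C)}, \overline{f}\pi_X), \overline{g}\pi_X)$, and applying the transpose identity $\overline{c(\overline{g},\overline{f})} = gf$ twice — now with the composition order reversed relative to the lemma — collapses its transpose to a map carrying a factor of $fg$. Since $f$ is a homotopy equivalence we have $fg \sim id_{X \times B}$, so this transpose is homotopic to the transpose of $\pi_{\uHom(B,C)}$, as required. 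The symmetric computation, using instead $gf \sim id_{X \times A}$, establishes $f^* g^* \sim id_{X \times \uHom(A,C)}$.

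The only genuine obstacle is bookkeeping: one must keep the variances straight so that each composition map $c_{\bullet,\bullet,\bullet}$ receives the correct subscripts and each transpose is inserted in the correct slot, and one must confirm that the factor appearing in the collapsed transpose is $fg$ (rather than $gf$), matching the direction of precomposition. Beyond this, no idea is needed past Lemma \ref{lemma: precomposition with homotopy equivalence is homotopy equivalence}; in particular the result is not a formal consequence of passing to an opposite tribe, since the tribe axioms are not self-dual, so the argument must genuinely be carried out by hand in the mirrored form.
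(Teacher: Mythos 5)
Your proposal is correct and is exactly the mirrored argument the paper intends by the word ``Dually'' (no separate proof is given in the paper): you reduce to second components via \cite[Lem.\ 3.3.14]{joyal_notes_2017}, pass to adjoint transposes via Lemma \ref{lemma: different characterisations of function extensionality}, and collapse the transpose of $g^*f^*$ to a factor of $fg \sim id_{X\times B}$, with the variances and slot order handled correctly. Your incidental observations --- that the subscript $\uHom(B,A)$ in the statement should read $\uHom(B,C)$, and that this is an informal mirror-image duality rather than a formal passage to an opposite tribe --- are both accurate.
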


The following proposition is the categorical counterpart to the type-theoretic fact that assuming function extensionality the type of witnesses that a function is an equivalence is a proposition, i.e. $\mathtt{isProp}(\mathtt{isEquiv}(f))$ is always inhabited for any function $f$.

\begin{proposition}
\label{prop: projection object of equivalences is homotopy monomorphism}

     Let $\cC$ be a $\pi$-tribe and let $A$ and $B$ be objects in $\cC$. Then, the projection $\pi_1\colon \myuline{\mathsf{Eq}}(A,B) \to \uHom(A,B)$ is a homotopy monomorphism.
\end{proposition}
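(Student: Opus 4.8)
The plan is to verify condition (4) of Lemma \ref{lemma: different characterisations of homotopy monic fibration} for the fibration $\pi_1$. So suppose we are given two morphisms $g, h \colon Z \to \myuline{\mathsf{Eq}}(A,B)$ such that $\pi_1 g$ and $\pi_1 h$ are fibrations and $\pi_1 g = \pi_1 h =: \overline{e} \colon Z \to \uHom(A,B)$; I must produce a homotopy $g \sim h$. By Proposition \ref{prop-Eq(-xA,B)-is-representable} together with Lemma \ref{lemma: transpose of morphism into object of equivalences is equivalence}, the common composite $\overline{e}$ is the adjoint transpose of a homotopy equivalence $e \colon Z \times A \to Z \times B$ over $Z$, and $g, h$ correspond to two choices of inverse data for $e$. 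The key point, which is the categorical shadow of the fact that $\mathtt{isEquiv}(f)$ is a proposition, is that once $e$ is known to be an equivalence, all such inverse data is homotopically unique.

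To make this precise I would pull the defining pullback of $\myuline{\mathsf{Eq}}(A,B)$ back along the fibration $\overline{e}$. Since $\pi_1$ factors through both legs of the pullback $\myuline{\mathsf{Eq}}(A,B) = \myuline{\mathsf{LInv}}(A,B) \times_{\uHom(A,B)} \myuline{\mathsf{RInv}}(A,B)$ and pullback preserves pullbacks, this yields $\overline{e}^{*}\myuline{\mathsf{Eq}}(A,B) \cong \overline{e}^{*}\myuline{\mathsf{LInv}}(A,B) \times_{Z} \overline{e}^{*}\myuline{\mathsf{RInv}}(A,B)$ over $Z$. I would then identify the two factors with homotopy fibres. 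Using that the internal hom preserves path object factorisations (Lemma \ref{lemma: different characterisations of function extensionality}(1), i.e. function extensionality), the fibration $({\partial_0}_*, {\partial_1}_*) \colon \uHom(B, P_B) \to \uHom(B,B) \times \uHom(B,B)$ exhibits $\uHom(B,P_B)$ as a path object for $\uHom(B,B)$; consequently the strict pullback of Construction \ref{constr: object of equiv} defining $\myuline{\mathsf{RInv}}(A,B)$, restricted along $\overline{e}$, computes the homotopy fibre over $id_B$ of the postcomposition map $e_* \colon Z \times \uHom(B,A) \to Z \times \uHom(B,B)$. Dually, $\overline{e}^{*}\myuline{\mathsf{LInv}}(A,B)$ computes the homotopy fibre over $id_A$ of the precomposition map $e^{*} \colon Z \times \uHom(B,A) \to Z \times \uHom(A,A)$.

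Now I would invoke that $e$ is an equivalence. By Lemma \ref{lemma: precomposition with homotopy equivalence is homotopy equivalence} the map $e_*$ is a homotopy equivalence over $Z$, and by the Corollary following it so is $e^{*}$. Since the homotopy fibre of a homotopy equivalence is contractible (homotopy equivalences being stable under homotopy pullback), both projections $\overline{e}^{*}\myuline{\mathsf{RInv}}(A,B) \to Z$ and $\overline{e}^{*}\myuline{\mathsf{LInv}}(A,B) \to Z$ are trivial fibrations. As trivial fibrations are stable under pullback and composition, their fibre product $\overline{e}^{*}\myuline{\mathsf{Eq}}(A,B) \to Z$ is again a trivial fibration. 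Finally, $g$ and $h$ correspond, by the universal property of the pullback, to two sections of $\overline{e}^{*}\myuline{\mathsf{Eq}}(A,B) \to Z$. Since a trivial fibration is a homotopy equivalence and therefore in particular a homotopy monomorphism, the equivalence $(1)\Leftrightarrow(4)$ of Lemma \ref{lemma: different characterisations of homotopy monic fibration} (applied to this fibration, whose composites with the two sections are both the identity) shows that any two of its sections are homotopic. Transporting this homotopy along the projection $\overline{e}^{*}\myuline{\mathsf{Eq}}(A,B) \to \myuline{\mathsf{Eq}}(A,B)$ yields $g \sim h$, as required.

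The main obstacle I anticipate is the second step: the bookkeeping needed to identify the restricted strict pullbacks $\overline{e}^{*}\myuline{\mathsf{RInv}}(A,B)$ and $\overline{e}^{*}\myuline{\mathsf{LInv}}(A,B)$ with the genuine homotopy fibres of $e_*$ and $e^{*}$. This is where function extensionality enters essentially, ensuring both that $\uHom(B,P_B)$ and $\uHom(A,P_A)$ really are path objects and that the composition maps $c_{B,A,B}$ and $c_{A,B,A}$ of Construction \ref{constr: object of equiv} realise honest post- and precomposition, so that the pullbacks are the expected mapping-path-space constructions. Everything else is a formal consequence of the cited lemmas.
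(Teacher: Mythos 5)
Your proposal is correct and follows essentially the same route as the paper's proof: pull back $\myuline{\mathsf{RInv}}(A,B)$ and $\myuline{\mathsf{LInv}}(A,B)$ along the common composite, identify the results (using function extensionality to recognise $\uHom(B,P_B)$ as a path object) as the homotopy fibres over the identities of the post- and pre-composition maps, deduce from Lemma \ref{lemma: precomposition with homotopy equivalence is homotopy equivalence} that these fibres are trivial fibrations over the base, and conclude by the fact that any two sections of a trivial fibration are homotopic. The only cosmetic difference is that you phrase the middle step in the language of homotopy fibres where the paper works explicitly with the mapping path object of $f_*$ and a 2-out-of-3 argument, but the content is identical.
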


\begin{proof}
    By Lemma \ref{lemma: different characterisations of homotopy monic fibration} (4), it suffices to show that for any morphisms $\overline{H}, \overline{K} \colon X \to \myuline{\mathsf{Eq}}(A,B)$ such that $\pi_1\overline{H}=\pi_1\overline{K} = \overline{f}$ for some map $\overline{f} \colon X \to \uHom(A,B)$ we have $\overline{H} \sim \overline{K}$. Define $\myuline{\mathsf{RInv}}_f$ as the pullback

    \[\begin{tikzcd}[every label/.append style={outer sep=0.2cm}]
	{\myuline{\mathsf{RInv}}_f} & {\myuline{\mathsf{RInv}}(A,B)} & {\myuline{\mathsf{Hom}}(B,P_B)} \\
	{X \times  \myuline{\mathsf{Hom}}(B,A)} & {\myuline{\mathsf{Hom}}(A,B) \times \myuline{\mathsf{Hom}}(B,A)} & {\myuline{\mathsf{Hom}}(B,B) \times \myuline{\mathsf{Hom}}(B,B)} \\
	& {} \\
	&& {}
	\arrow[from=1-1, to=1-2]
	\arrow[from=1-1, to=2-1]
	\arrow["\lrcorner"{anchor=center, pos=0.125}, draw=none, from=1-1, to=3-2]
	\arrow[from=1-2, to=1-3]
	\arrow[from=1-2, to=2-2]
	\arrow[two heads, from=1-2, to=2-2]
	\arrow["\lrcorner"{anchor=center, pos=0.125}, draw=none, from=1-2, to=4-3]
	\arrow["{({\partial_0}_*,{\partial_1}_*)}", two heads, from=1-3, to=2-3]
	\arrow["{\overline{f} \times id_{\myuline{\mathsf{Hom}}(B,A)}}"', from=2-1, to=2-2]
	\arrow["{(c_{B,A,B},const_B)}"', from=2-2, to=2-3]
    \end{tikzcd}\]
    
    \vspace{-1.5cm}
    
    \noindent Observe that the composite of the first component of the morphism on the bottom is the second component of the internal post-composition map $f_{*}$. Thus, pulling back this diagram along $X \to 1$ we obtain a diagram of pullbacks
    \[\begin{tikzcd}[scale cd=0.8, every label/.append style={outer sep=0.2cm}]
	{\myuline{\mathsf{RInv}}_f} & {X \times \bullet} & {X \times \myuline{\mathsf{Hom}}(B,P_B) } \\
	{(X \times \myuline{\mathsf{Hom}}(B,A)) \times_X (X \times 1)} & {(X \times \myuline{\mathsf{Hom}}(B,A)) \times_X (X \times \myuline{\mathsf{Hom}}(B,B))} & {(X \times \myuline{\mathsf{Hom}}(B,B)) \times_X (X \times  \myuline{\mathsf{Hom}}(B,B))} \\
	{X \times 1} & {X \times \myuline{\mathsf{Hom}}(B,B)} & {} \\
	& {} & {} \\
	& {}
	\arrow[from=1-1, to=1-2]
	\arrow[from=1-1, to=2-1]
	\arrow["\lrcorner"{anchor=center, pos=0.125}, draw=none, from=1-1, to=4-2]
	\arrow[from=1-2, to=1-3]
	\arrow[from=1-2, to=2-2]
	\arrow["{(p_1,p_2)}", two heads, from=1-2, to=2-2]
	\arrow["\lrcorner"{anchor=center, pos=0.125}, draw=none, from=1-2, to=4-3]
	\arrow[two heads, from=1-3, to=2-3]
	\arrow["{id \times_X  (id_X \times\overline{id_B})}"', from=2-1, to=2-2]
	\arrow["{\pi_2}", from=2-1, to=3-1]
	\arrow["\lrcorner"{anchor=center, pos=0.125}, draw=none, from=2-1, to=5-2]
	\arrow["{f_* \times_X id}"', from=2-2, to=2-3]
	\arrow["{\pi_2}", from=2-2, to=3-2]
	\arrow["{id_X \times \overline{id_B}}"', from=3-1, to=3-2]
    \end{tikzcd}\]
    
    \vspace{-1.5cm}
    
    \noindent which is a diagram in $\cC/\!\!/X$. By Lemma \ref{lemma: transpose of morphism into object of equivalences is equivalence}, the adjoint transpose of the morphism $\overline{f}$ is a homotopy equivalence $f \colon  X \times A \to X \times B$ over $X$. By Lemma \ref{lemma: precomposition with homotopy equivalence is homotopy equivalence}, the post-composition map $f_*$ is therefore a homotopy equivalence. By Lemma \ref{lemma: different characterisations of function extensionality}, we see that $\uHom(B,P_B)$ is in fact a path object for the object $\uHom(B,B)$ in $\cC \simeq \cC/\!\!/1$. Since the pullback functor $\cC/\!\!/1 \to \cC/\!\!/X$ is exact, it preserves path objects and so $X \times \uHom(B,P_B)$ is a path object for $X \times \uHom(B,B)$ over $X$ in $\cC/\!\!/X$. Hence, the object $X \times \bullet$ is in fact a mapping path object for $f_*$. Then, $p_1$ is the pullback of a trivial fibration hence a homotopy equivalence, thus the induced map $X \times \uHom(B,A) \to X \times \bullet$ is a homotopy equivalence as well by 2-out-of-3. Therefore $p_2$ is a trivial fibration by 2-out-of-3. Then, $\myuline{\mathsf{RInv}}_f \to X \times 1 \cong X$ is the pullback of the trivial fibration $p_2$ along the constant identity $const_B \colon  X \to \uHom(B,B)$, hence a trivial fibration.

Dually, one shows for $\myuline{\mathsf{LInv}}(A,B)$ and pre-composition $f^*$ that the dually constructed fiber $\myuline{\mathsf{LInv}}_f \to X$ is a trivial fibration. Then, considering $\myuline{\mathsf{Eq}}_f := \myuline{\mathsf{LInv}}_f \times_X \myuline{\mathsf{RInv}}_f$, we get that $\myuline{\mathsf{Eq}}_f \to X$ is a trivial fibration. Then, the maps $H$ and $K$ induce by the universal property of the pullback sections $H'$ and $K'$ of this trivial fibration as in the diagram
\[\begin{tikzcd}
	X \\
	& {\myuline{\mathsf{Eq}}_f} & {\myuline{\mathsf{Eq}}(A,B)} \\
	& X & {\myuline{\mathsf{Hom}}(A,B)}
	\arrow["{H',K'}", dashed, from=1-1, to=2-2]
	\arrow["{H,K}", curve={height=-12pt}, from=1-1, to=2-3]
	\arrow["{id_X}"', curve={height=12pt}, equals, from=1-1, to=3-2]
	\arrow[from=2-2, to=2-3]
	\arrow["\sim"', two heads, from=2-2, to=3-2]
	\arrow["\lrcorner"{anchor=center, pos=0.125}, draw=none, from=2-2, to=3-3]
	\arrow[from=2-3, to=3-3]
	\arrow["{\overline{f}}"', from=3-2, to=3-3]
\end{tikzcd}\]
Since any two sections of a trivial fibration are homotopic we get that $H' \sim K'$, hence $H \sim K$.
 \end{proof}

\subsection{Univalent Fibrations and Univalent Tribes}

Using the construction of the object of equivalences from above, we can define the notion of univalent fibration in a $\pi$-tribe. Let $p \colon E \to B$ be a fibration and let  $ \pi_i \colon B\times B \to B$ be the product projections. We can then form the internal hom
$$(\partial_0, \partial_1) \colon \uHom_{B \times B}(\pi_1^*p,\pi_2^*p) \to B \times B$$
which is a fibration that represents a functor $(\cC/B\times B)^{op} \to \Set$ which sends a pair $(f,g) \colon X \to B \times B$ to the set $\mathsf{Hom}_{\cC/X}(f^*p,g^*p)$.

Taking $(f,g)$ to be the diagonal $(id,id) \colon B \to B \times B$, we get by taking the identity on $E$ a canonical map $p \to p$. By the universal property of the internal hom, this corresponds to a morphism

\[\begin{tikzcd}
	B && {\uHom_{B \times B}(\pi_1^*p,\pi_2^*p)} \\
	& {B \times B}
	\arrow["{\delta_E}", from=1-1, to=1-3]
	\arrow["{(id,id)}"', from=1-1, to=2-2]
	\arrow["{(\partial_0,\partial_1)}", two heads, from=1-3, to=2-2]
\end{tikzcd}\]

   Given a fibration $p\colon E \to B$ we write $\myuline{\mathsf{Eq}}_{B}(E):= \myuline{\mathsf{Eq}}_{B \times B}(\pi_1^*p,\pi_2^*p)$. Then, we have by Corollary \ref{cor: representation of functor object of equivalences in tribe} for any $(f,g) \colon X \to B\times B$
   $$\mathsf{Hom}_{\cC/B\times B}((f,g), \myuline{\mathsf{Eq}}_{B}(E)) \cong \mathsf{Eq}_{X}(f^*p,g^*p).$$
The map $\delta_E\colon B \to \uHom_{B \times B}(\pi_1^*p,\pi_2^*p)$ factors through $\myuline{\mathsf{Eq}}_{B}(E)$ (by taking the identity as inverses and constant homotopies) hence we get our definition of univalence:

\begin{definition}
\label{def:univalent fibration-categorical def}

A fibration $p\colon E \to B$ in a $\pi$-tribe is \textit{univalent} if the morphism $\delta_E \colon B \to \myuline{\mathsf{Eq}}_{B}(E)$ is a homotopy equivalence.

\end{definition}

By Lemma \ref{lemma: object of equivalences is independent of choice of of path object}, this definition is independent of the choice of object of equivalences, i.e. of the choice of the underlying path objects.

\begin{definition}
\label{def: local class tribe}
    
    Let $\cC$ be a $\pi$-tribe. A class of fibrations $S$ in $\cC$ is a \textit{local class} if it is closed under pullback. A local class is \textit{closed} if it is closed under composition and if for all fibrations $f,g \in S$, $\Pi_f(g) \in S$. A local class is $p$-\textit{bounded} if its elements are all obtained as pullbacks of some morphism $p$.
\end{definition}

\begin{definition}
\label{def: local class of pullbacks tribe}
    Given a fibration $p\colon E \to B$ in a tribe, we denote as $S_p$ the class of morphisms obtained as pullbacks of $p$. This is a $p$-bounded local class.
\end{definition}

\begin{definition}
\label{def: enough univalent fibs tribe}
    A $\pi$-tribe has \textit{enough univalent fibrations} if for every fibration $f\colon A \to X$, there is a univalent fibration $p$ such that $f \in S_p$ and $S_p$ is closed. We will call such a tribe also a \textit{univalent tribe}.
\end{definition}

Observe that if  a fibration $p\colon E \to B$ is univalent, then there is a factorisation

\[\begin{tikzcd}
	B && {\myuline{\mathsf{Eq}}_B(E)} \\
	& {B \times B}
	\arrow["{\delta_E \sim}", from=1-1, to=1-3]
	\arrow["{(id,id)}"', from=1-1, to=2-2]
	\arrow["{(\partial_0,\partial_1)}", two heads, from=1-3, to=2-2]
\end{tikzcd}\]
of the diagonal of $B$ into a homotopy equivalence, followed by a fibration. This is a path object factorisation in a fibration category but not yet a path object in a tribe since the first map need not be anodyne. However, since all objects in a tribe are cofibrant, by factoring the homotopy equivalence $\delta_E$, any map into $\myuline{\mathsf{Eq}}_B(E)$ can be lifted along a trivial fibration to a path object in a tribe, thus $\myuline{\mathsf{Eq}}_B(E)$ can be upgraded and used as if it were a full path object. This means that morphisms into $\myuline{\mathsf{Eq}}_B(E)$ are in fact homotopies.

This shows that our categorical definition of univalence incorporates the type theoretic notion of univalence as `being equivalent is equivalent to being homotopic'. The following proposition establishes an important property of univalent fibrations, namely that they classify fibrations via pullback along a homotopy unique morphism.

\begin{proposition}
\label{prop: pullback of univalent fibration is homotopy unique}
    Let $p \colon E \to B$ be a univalent fibration in a $\pi$-tribe and let $f \colon A \to X$ and  $f' \colon A' \to X$ be any fibrations in $\cC/ X$ such that there are pullbacks 
\[\begin{tikzcd}
	A & E & {A'} & E \\
	X & B & X & B
	\arrow[from=1-1, to=1-2]
	\arrow["f"', two heads, from=1-1, to=2-1]
	\arrow["\lrcorner"{anchor=center, pos=0.125}, draw=none, from=1-1, to=2-2]
	\arrow["p", two heads, from=1-2, to=2-2]
	\arrow[from=1-3, to=1-4]
	\arrow["{f'}"', two heads, from=1-3, to=2-3]
	\arrow["\lrcorner"{anchor=center, pos=0.125}, draw=none, from=1-3, to=2-4]
	\arrow["p", two heads, from=1-4, to=2-4]
	\arrow["\psi"', from=2-1, to=2-2]
	\arrow["{\psi'}"', from=2-3, to=2-4]
\end{tikzcd}\]
Then, $A$ and $A'$ are homotopy equivalent over $X$ via some homotopy equivalence $k \colon f \to f'$ in $\cC/X$ if and only if $\psi \sim \psi'$. 

\end{proposition}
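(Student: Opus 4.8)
The plan is to recognise that both sides of the claimed equivalence are governed by the existence of a single lift of the pair $(\psi,\psi') \colon X \to B \times B$ along the fibration $(\partial_0,\partial_1)\colon \myuline{\mathsf{Eq}}_B(E) \to B \times B$. I would show separately that (i) such a lift exists if and only if $f$ and $f'$ are homotopy equivalent over $X$, using representability of the object of equivalences, and that (ii) such a lift exists if and only if $\psi \sim \psi'$, using univalence of $p$. Chaining the two equivalences then yields the statement.

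First I would record that, since $\pi_1(\psi,\psi') = \psi$ and $\pi_2(\psi,\psi') = \psi'$, pasting of pullbacks gives canonical identifications $(\psi,\psi')^*\pi_1^*p \cong f$ and $(\psi,\psi')^*\pi_2^*p \cong f'$. A lift of $(\psi,\psi')$ along $(\partial_0,\partial_1)$ is by definition a morphism $(\psi,\psi') \to \myuline{\mathsf{Eq}}_B(E)$ in $\cC/B\times B$, so Corollary \ref{cor: representation of functor object of equivalences in tribe}, applied in the tribe $\cC/\!\!/(B \times B)$ to the fibrations $\pi_1^*p$ and $\pi_2^*p$, identifies the set of such lifts with $\mathsf{Eq}_X(f,f')$. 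By the local form of Lemma \ref{lemma: transpose of morphism into object of equivalences is equivalence}, any element of this set is a homotopy equivalence $f \to f'$ in $\cC/X$ together with inverse and witnessing data, and conversely any homotopy equivalence $k \colon f \to f'$ can be completed to such an element by choosing an inverse and representing the witnessing homotopies in the fixed path objects. Hence a lift exists precisely when condition (1) holds.

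For the second equivalence I would invoke Definition \ref{def:univalent fibration-categorical def}: univalence of $p$ means $\delta_E \colon B \to \myuline{\mathsf{Eq}}_B(E)$ is a homotopy equivalence, and since $(\partial_0,\partial_1)\delta_E = (id,id)$, the factorisation $B \xrightarrow{\delta_E} \myuline{\mathsf{Eq}}_B(E) \xrightarrow{(\partial_0,\partial_1)} B\times B$ exhibits $\myuline{\mathsf{Eq}}_B(E)$ as a weak path object for $B$. As explained in the remark following Definition \ref{def:univalent fibration-categorical def}, factoring $\delta_E$ as an anodyne map $B \rightarrowtail P$ followed by a fibration $q \colon P \twoheadrightarrow \myuline{\mathsf{Eq}}_B(E)$ produces, by $2$-out-of-$3$, a trivial fibration $q$ and a genuine tribe path object $B \rightarrowtail P \twoheadrightarrow B \times B$; lifting along $q$ then lets one pass freely between maps $X \to \myuline{\mathsf{Eq}}_B(E)$ over $B \times B$ and maps $X \to P$ over $B \times B$. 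A lift of $(\psi,\psi')$ therefore exists if and only if there is a homotopy in $P$ with boundary $(\psi,\psi')$, that is, if and only if $\psi \sim \psi'$, using that in a fibration category with all objects cofibrant the right homotopy relation via any weak path object recovers the canonical homotopy relation. Combining (i) and (ii) gives $(1) \Leftrightarrow \psi \sim \psi'$.

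The main obstacle I anticipate is the bookkeeping in step (ii): one must justify carefully that the weak path object $\myuline{\mathsf{Eq}}_B(E)$ may be replaced by a genuine tribe path object without disturbing the prescribed boundary $(\psi,\psi')$, and that existence of a lift with that boundary is genuinely \emph{equivalent} to the relation $\psi \sim \psi'$ rather than merely implied by it. This is exactly the point where univalence enters, and where cofibrancy of all objects in a tribe together with the lifting properties of trivial fibrations must be used. By contrast, step (i) is essentially a restatement of the representability already established in Corollary \ref{cor: representation of functor object of equivalences in tribe}.
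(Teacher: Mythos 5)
Your proposal is correct and follows essentially the same route as the paper: both arguments identify lifts of $(\psi,\psi')$ along $(\partial_0,\partial_1)\colon \myuline{\mathsf{Eq}}_B(E) \to B \times B$ with elements of $\mathsf{Eq}_X(f,f')$ via the representability of the object of equivalences, and then use univalence to upgrade $\myuline{\mathsf{Eq}}_B(E)$ to a genuine path object for $B$ so that such lifts correspond exactly to homotopies $\psi \sim \psi'$. Your step (ii) in fact spells out the anodyne--fibration factorisation and the lifting along the resulting trivial fibration more explicitly than the paper does.
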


\begin{proof}
    We have $f = \psi^*p $ and  $f'= \psi'^*p$. $k \colon \psi^*p \to \psi'^*p$ is a homotopy equivalence if and only if it defines an element of $\mathsf{Eq}_X(\psi^*p, \psi'^*p)$. By the universal property of $\myuline{\mathsf{Eq}}_B(E)$, this corresponds to the existence of a morphism $H \colon (\psi,\psi') \to (\partial_0,\partial_1)$ over $B \times B$ such that 
\[\begin{tikzcd}
	X && {\myuline{\mathsf{Eq}}_B(E)} \\
	& {B \times B}
	\arrow["H", from=1-1, to=1-3]
	\arrow["{(\psi,\psi')}"', from=1-1, to=2-2]
	\arrow["{(\partial_0,\partial_1)}", two heads, from=1-3, to=2-2]
\end{tikzcd}\]
commutes. But $\myuline{\mathsf{Eq}}_B(E)$ can be upgraded to a path object $\myuline{\mathsf{Eq}}_B(E)'$ for $B$ and $H$ can be upgraded to a homotopy $\tilde{H} \colon X \to \myuline{\mathsf{Eq}}_B(E)'$ such that $\overline{H} \colon \psi \sim \psi'$. Conversely, any such homotopy can be factored through $\myuline{\mathsf{Eq}}_B(E)$ inducing by the universal property of $\myuline{\mathsf{Eq}}_B(E)$ a homotopy equivalence $\psi^*p =f \simeq \psi'^*p =f'$ over $X$.
\end{proof}

\begin{remark}
\label{remark: univalent fibration is small object classifier}
    
    In the light of Proposition \ref{prop: pullback of univalent fibration is homotopy unique} we can think of univalent fibrations $p$ as `small object classifiers' for the class of fibrations $S_p$ obtained by pulling back $p$,  in the sense that for any fibration $f \colon A \to X$ in $S_p$ there is a homotopy unique `name' $\chi_f \colon X \to B$ such that $f$ is (up to homotopy equivalence) the pullback $\chi_f^*p$. Thus, we have via pullback a bijection
    $$\{\text{homotopy classes of morphism } X \to B\} \cong \{\text{homotopy equiv. classes of fibrations } A \to X \in S_p\}.$$
    This is a direct generalisation of subobject classifiers $\Omega$ for ordinary categories, which classify all monomorphisms $A \to X$ via a unique characteristic morphism  $X \to \Omega$.

\end{remark}

The preceding remark establishes a `homotopy universal property' of univalent fibrations $p \colon E \to B$ in a $\pi$-tribe $\cC$. As one expects, this homotopy universal property becomes an actual universal property in the $\infty$-localisation of the $\pi$-tribe, that is when inverting all homotopy equivalences. This is in fact one of our main results below when considering the localisation of a $\pi$-tribe $\cC$ as an $\infty$-category.

 However, we already get for the homotopy category $Ho\cC$ the expected `truncated' universal property that does not distinguish when two fibrations in $S_p$ are homotopy equivalent in more than one way.

\begin{proposition}
    Let $p \colon E \to B$ be a univalent fibration in a $\pi$-tribe $\cC$ and let $Ho\cC$ be the homotopy category of $\cC$. The functor $E_p \colon (Ho\cC)^{op} \to \Set$ sending an object $X$ to the set 
    $$E_p(X) =\{ \text{homotopy equiv. classes of fibrations } \bullet \to X \in S_p\}$$
    and a homotopy class $[h\colon X \to Y]$ to a function $E_p(h) \colon E_p(Y) \to E_p(X)$ sending a homotopy equivalence class $[f\colon A \to Y]$ to $[h^*f\colon h^*A \to X]$ is well-defined and represented by $B$ with the universal element $[p] \in E_p(B)$ such that

    $$\mathsf{Hom}_{Ho\cC}(X,B) \cong E_p(X).$$
\end{proposition}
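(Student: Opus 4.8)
The plan is to identify the stated bijection with a repackaging of Proposition~\ref{prop: pullback of univalent fibration is homotopy unique}, after checking that $E_p$ is a genuine functor on $(Ho\cC)^{op}$. I would organise the argument in three steps: first verify that $E_p$ is well defined and functorial; then exhibit the candidate natural transformation $\Phi_X\colon \mathsf{Hom}_{Ho\cC}(X,B) \to E_p(X)$ given by pulling back $p$, and show it is a bijection; finally read off representability with universal element $[p]$ by the Yoneda lemma.

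For the first step, note $E_p(X)$ is a set since it consists of homotopy-equivalence classes of fibrations over $X$. Given a homotopy class $[h\colon X \to Y]$ and $[f\colon A \to Y] \in E_p(Y)$, I set $E_p([h])([f]) := [h^*f]$, where $h^*f$ is the pullback of the fibration $f$ along $h$ (which exists, since pullbacks along fibrations exist in a tribe). I must check three things: that $h^*f$ again lies in $S_p$, that the class $[h^*f]$ is independent of the representative $f$, and that it is independent of the representative $h$. The first is immediate from Definition~\ref{def: local class of pullbacks tribe} together with $S_p$ being a local class (Definition~\ref{def: local class tribe}) closed under pullback: writing $f = \psi^* p$, we have $h^* f \cong (\psi h)^* p \in S_p$. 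For independence of $f$, I would use that for any map the induced pullback functor between fibrant slices is exact and hence preserves homotopy equivalences, so $f \simeq f'$ over $Y$ gives $h^* f \simeq h^* f'$ over $X$. For independence of $h$, I would invoke Proposition~\ref{prop: pullback of univalent fibration is homotopy unique}: if $h \sim h'$ then $\psi h \sim \psi h'$, since homotopy is a congruence for composition, whence $(\psi h)^* p \simeq (\psi h')^* p$, i.e. $h^* f \simeq h'^* f$. Functoriality, namely $E_p([id]) = id$ and $E_p([h][k]) = E_p([k]) E_p([h])$, then follows from the pseudofunctoriality of pullback, the canonical isomorphisms $(hk)^* f \cong k^* h^* f$ being in particular homotopy equivalences.

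For the second step, define $\Phi_X([\psi]) := [\psi^* p]$. This is well defined by the forward direction of Proposition~\ref{prop: pullback of univalent fibration is homotopy unique} ($\psi \sim \psi'$ implies $\psi^* p \simeq \psi'^* p$), and it is natural in $X$ because for $h\colon X \to Y$ one computes $E_p([h])\,\Phi_Y([\psi]) = [h^*\psi^* p] = [(\psi h)^* p] = \Phi_X([\psi][h])$. Injectivity of $\Phi_X$ is exactly the converse direction of Proposition~\ref{prop: pullback of univalent fibration is homotopy unique}, and surjectivity holds because every $f \in S_p$ is by Definition~\ref{def: local class of pullbacks tribe} a pullback $\psi^* p$ of $p$, so $[f] = \Phi_X([\psi])$. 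For the final step, since $\Phi_B([id_B]) = [id_B^* p] = [p]$, the element $[p] \in E_p(B)$ is the universal element and $\Phi$ is the natural isomorphism $\mathsf{Hom}_{Ho\cC}(-,B) \cong E_p$ witnessing representability.

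The main obstacle is the bookkeeping in the first step: ensuring that pulling back along a possibly non-fibration map $h$ is homotopy invariant in \emph{both} arguments, which forces the appeal to exactness of the pullback functor for invariance in $f$ and to Proposition~\ref{prop: pullback of univalent fibration is homotopy unique} for invariance in $h$. I expect everything else to follow formally once the functoriality of $E_p$ is in place, since the bijection itself is little more than a restatement of Proposition~\ref{prop: pullback of univalent fibration is homotopy unique} combined with the definition of $S_p$.
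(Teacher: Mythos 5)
Your proposal is correct and follows exactly the route of the paper, whose entire proof is the one-line observation that well-definedness and the bijection follow from Proposition \ref{prop: pullback of univalent fibration is homotopy unique} together with the exactness of $h^*$; you have simply spelled out the details (membership in $S_p$, invariance in both arguments, naturality, and the universal element) that the paper leaves implicit.
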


\begin{proof}

    The bijection and well-definedness of the maps involved follow directly from Proposition \ref{prop: pullback of univalent fibration is homotopy unique} and the exactness of $h^*$.
\end{proof}

We will see below that it is necessary and possible to pass to $\infty$-categories in order to give $E_p(X)$ the full structure of a space without modding out (i.e. truncating) by some equivalence relation.

\begin{lemma}
    
    \label{lemma: pullback of object of equivalences is object of equivalences}
    Let $p \colon E \to B$ be a fibration and let $f \colon Y \to X$ be a fibration such that we have a pullback
    \[\begin{tikzcd}
    	X & E \\
    	Y & B
    	\arrow["h", two heads, from=1-1, to=1-2]
    	\arrow["q"', two heads, from=1-1, to=2-1]
    	\arrow["\lrcorner"{anchor=center, pos=0.125}, draw=none, from=1-1, to=2-2]
    	\arrow["p", two heads, from=1-2, to=2-2]
    	\arrow["f"', two heads, from=2-1, to=2-2]
    \end{tikzcd}\]
    Then, the morphism $E_f \to Y \times Y$ given by the pullback
    
    \[\begin{tikzcd}
    	{E_f} & {\myuline{\mathsf{Eq}}_B(E)} \\
    	{Y \times Y} & {B\times B}
    	\arrow["k", two heads, from=1-1, to=1-2]
    	\arrow["{(\partial_0',\partial_1')}"', two heads, from=1-1, to=2-1]
    	\arrow["\lrcorner"{anchor=center, pos=0.125}, draw=none, from=1-1, to=2-2]
    	\arrow["{(\partial_0,\partial_1)}", two heads, from=1-2, to=2-2]
    	\arrow["{f \times f}"', two heads, from=2-1, to=2-2]
    \end{tikzcd}\]
    is an object of equivalences for the fibration $q$.
\end{lemma}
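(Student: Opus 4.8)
The plan is to verify the defining representable property of the object of equivalences directly, using the Yoneda lemma in $\cC/Y\times Y$. By Corollary \ref{cor: representation of functor object of equivalences in tribe} applied to the fibration $q\colon X \to Y$, an object over $Y\times Y$ is an object of equivalences for $q$ (that is, it agrees with $\myuline{\mathsf{Eq}}_{Y\times Y}(\pi_1^*q,\pi_2^*q)$) precisely when it represents the functor $(\cC/Y\times Y)^{op}\to \Set$ sending $(g_1,g_2)\colon Z \to Y\times Y$ to $\mathsf{Eq}_Z(g_1^*q, g_2^*q)$. Hence it suffices to exhibit an isomorphism
$$\mathsf{Hom}_{\cC/Y\times Y}((g_1,g_2), E_f) \cong \mathsf{Eq}_Z(g_1^*q, g_2^*q)$$
natural in $(g_1,g_2)$.

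First I would use the universal property of the pullback square defining $E_f$: a map $(g_1,g_2)\to E_f$ over $Y\times Y$ is the same datum as a map $(fg_1, fg_2) = (f\times f)(g_1,g_2) \to \myuline{\mathsf{Eq}}_B(E)$ over $B\times B$. Applying Corollary \ref{cor: representation of functor object of equivalences in tribe} to $p$ then identifies this hom-set with $\mathsf{Eq}_Z\big((fg_1)^*p, (fg_2)^*p\big)$. The key observation is the pasting of pullbacks: since $q = f^*p$, for each $i$ we have $g_i^*q = g_i^*f^*p = (fg_i)^*p$ as fibrations over $Z$, so the two collections of equivalence-data coincide. Chaining these identifications yields the desired natural isomorphism, and Yoneda then gives $E_f \cong \myuline{\mathsf{Eq}}_{Y\times Y}(\pi_1^*q,\pi_2^*q)$, i.e. $E_f$ is an object of equivalences for $q$.

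The one point requiring care is that $\mathsf{Eq}_Z(-,-)$ is defined relative to fixed path objects on the relevant total spaces, so to make the identification $\mathsf{Eq}_Z\big((fg_1)^*p,(fg_2)^*p\big) = \mathsf{Eq}_Z(g_1^*q, g_2^*q)$ literal one must choose those path objects coherently. This is where I would invoke exactness: the pullback functor along $f$ between fibrant slices is exact and hence carries a chosen path object for the total space of $p$ to one for the total space of $q$, so the witnessing homotopies for the left and right inverses transport without loss. By Lemma \ref{lemma: object of equivalences is independent of choice of of path object} the resulting object of equivalences is in any case independent of these choices up to homotopy equivalence, so the conclusion does not depend on the bookkeeping. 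The main obstacle is precisely this compatibility of path objects and homotopy data under pullback; once it is arranged, the remainder of the argument is a formal composition of representable functors.
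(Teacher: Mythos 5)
Your proof is correct and follows essentially the same route as the paper's: identify $\mathsf{Hom}_{\cC/Y\times Y}((g_1,g_2),E_f)$ with $\mathsf{Hom}_{\cC/B\times B}((fg_1,fg_2),\myuline{\mathsf{Eq}}_B(E))$ via the pullback's universal property, apply Corollary \ref{cor: representation of functor object of equivalences in tribe}, and conclude by the pasting identity $g_i^*q=(fg_i)^*p$. Your additional care about coherent choices of path objects (via exactness of pullback between fibrant slices and Lemma \ref{lemma: object of equivalences is independent of choice of of path object}) is a reasonable refinement the paper leaves implicit.
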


\begin{proof}
    Let $(\psi_1,\psi_2) \colon A \to Y \times Y$ be any object in $\cC/Y\times Y$. By the adjunction $\Sigma_{f\times f} \dashv (f\times f)^*$ between the post-composition and pullback functors and by the universal property of $\myuline{\mathsf{Eq}}_B(E)$ we have
    \begin{align*}
            \mathsf{Hom}_{\cC/Y \times Y}((\psi_1,\psi_2), E_f) \cong \; & \mathsf{Hom}_{\cC/B \times B}( (f\psi_1,f\psi_2), \myuline{\mathsf{Eq}}_B(E)) \\
            \cong \; & \mathsf{Eq}_{A \times A}(\psi_1^*(f^*p),\psi_2^*(f^*p)) \\
            = \; & \mathsf{Eq}_{A \times A}(\psi_1^*q,\psi_2^*q)
    \end{align*}
    which shows that $E_f$ is an object of equivalences of $q$.
\end{proof}

The following proposition is the 1-categorical version of the analogue statement for univalent morphisms in $\infty$-categories, as proved in \autocite[Cor. 3.10]{gepner_univalence_2017} and \autocite[Prop. 2.5]{rasekh_univalence_2021}.

\begin{proposition}
    
    \label{prop: univalence is preserved and reflected by pullback along homotopy monomorphisms}
    Let $p \colon E \to B$ a univalent fibration and let $f \colon Y \to B$ be a fibration such that
    \[\begin{tikzcd}
    	X & E \\
    	Y & B
    	\arrow["h", two heads, from=1-1, to=1-2]
    	\arrow["q"', two heads, from=1-1, to=2-1]
    	\arrow["\lrcorner"{anchor=center, pos=0.125}, draw=none, from=1-1, to=2-2]
    	\arrow["p", two heads, from=1-2, to=2-2]
    	\arrow["f"', two heads, from=2-1, to=2-2]
    \end{tikzcd}\]
    is a pullback. Then, $q$ is univalent if and only if $f$ is a homotopy monomorphism.
\end{proposition}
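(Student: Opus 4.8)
The plan is to identify the object of equivalences of $q$ with the homotopy kernel pair of $f$ and then to read the statement off from the characterisation of homotopy monomorphisms in Lemma \ref{lemma: different characterisations of homotopy monic fibration}. By Lemma \ref{lemma: pullback of object of equivalences is object of equivalences}, the object $E_f = (f\times f)^*\myuline{\mathsf{Eq}}_B(E)$, together with its fibration $(\partial_0',\partial_1')\colon E_f\to Y\times Y$ and the comparison $k\colon E_f\to\myuline{\mathsf{Eq}}_B(E)$, is an object of equivalences for $q$. Hence $q$ is univalent exactly when the canonical section $\delta_X\colon Y\to E_f$, which lifts the diagonal $\Delta_Y\colon Y\to Y\times Y$, is a homotopy equivalence.

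First I would use univalence of $p$ to turn $\myuline{\mathsf{Eq}}_B(E)$ into a path object for $B$. Since $\delta_E$ is a homotopy equivalence with $(\partial_0,\partial_1)\delta_E=\Delta_B$, the fibration $(\partial_0,\partial_1)$ is a fibrant replacement of the diagonal of $B$, as noted after Definition \ref{def:univalent fibration-categorical def}. Therefore the strict pullback $E_f$ of $(\partial_0,\partial_1)$ along the fibration $f\times f$ computes the homotopy pullback of $\Delta_B$ along $f\times f$, which is the homotopy kernel pair $Y\times_B Y$ — itself modelled by the strict pullback because $f$ is a fibration. Writing $g\colon Y\times_B Y\to B$ for the common value $f\pi_1=f\pi_2$, the map $\delta_E g$ lifts the inclusion $Y\times_B Y\hookrightarrow Y\times Y$ to $\myuline{\mathsf{Eq}}_B(E)$ and so, by the universal property of the pullback $E_f$, induces a comparison $c\colon Y\times_B Y\to E_f$ over $Y\times Y$. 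As both its domain and codomain are homotopy pullbacks of the same cospan and $c$ is induced by the weak equivalence $\delta_E$, the map $c$ is a homotopy equivalence.

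Next I would verify that $\delta_X=c\circ\Delta_{Y/B}$, where $\Delta_{Y/B}=(\mathrm{id},\mathrm{id})\colon Y\to Y\times_B Y$ is the relative diagonal. Both sides are maps $Y\to E_f$ into the strict pullback, so I would check they share the same two legs: $(\partial_0',\partial_1')(-)=\Delta_Y$ and $k(-)=\delta_E f$. The second identity holds because, under the representability of Corollary \ref{cor: representation of functor object of equivalences in tribe} transported along the adjunction $\Sigma_{f\times f}\dashv(f\times f)^*$ from the proof of Lemma \ref{lemma: pullback of object of equivalences is object of equivalences}, both $k\delta_X$ and $\delta_E f$ classify the identity equivalence of $q$ over $(f,f)$. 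By the universal property of the pullback the two maps coincide, so $\delta_X=c\Delta_{Y/B}$ with $c$ a homotopy equivalence; the $2$-out-of-$3$ property then gives that $\delta_X$ is a homotopy equivalence if and only if $\Delta_{Y/B}$ is. Finally, the equivalence of conditions $(1)$ and $(2)$ in Lemma \ref{lemma: different characterisations of homotopy monic fibration} — whose proof records that $f$ is homotopy monic precisely when the relative diagonal $\Delta_{Y/B}\colon Y\to Y\times_B Y$ is a homotopy equivalence — closes the chain: $q$ univalent $\iff\delta_X$ a homotopy equivalence $\iff\Delta_{Y/B}$ a homotopy equivalence $\iff f$ a homotopy monomorphism.

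The hard part will be the second paragraph: making precise that $E_f$ really is a model of the homotopy kernel pair $Y\times_B Y$ and that $c$ is an equivalence. This is exactly where univalence of $p$ is indispensable, since it is what lets $\myuline{\mathsf{Eq}}_B(E)$ play the role of a path object on $B$; without it, $E_f$ would only be the fibrewise object of equivalences and would not see the diagonal of $B$. Once this identification and the matching $\delta_X=c\Delta_{Y/B}$ are in place, the remainder is a formal application of $2$-out-of-$3$ and Lemma \ref{lemma: different characterisations of homotopy monic fibration}.
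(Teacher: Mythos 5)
Your proposal is correct and follows essentially the same route as the paper: both identify $E_f=(f\times f)^*\myuline{\mathsf{Eq}}_B(E)$ as an object of equivalences for $q$ via Lemma \ref{lemma: pullback of object of equivalences is object of equivalences}, observe that $k\delta_X=\delta_E f$ since both represent the identity equivalence of $q$, and exploit univalence of $p$ together with the identification $Y\times_B Y\cong (Y\times Y)\times_{B\times B}B$. The only difference is presentational: where you build the explicit comparison $c\colon Y\times_B Y\to E_f$, factor $\delta_X=c\,\Delta_{Y/B}$ and apply 2-out-of-3, the paper packages the same content as an application of the pasting lemma for homotopy pullbacks (\cite[Lem. 6.2.3, 6.2.4]{joyal_notes_2017}) to the stacked squares.
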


\begin{proof}

   By Lemma \ref{lemma: pullback of object of equivalences is object of equivalences}, pullback along $f \times f$ gives an object of equivalences $E_f \to Y \times Y$ for $q$. Let $\delta_Y \colon Y \to E_f$ be the transpose of the identity on $q$ defined analogously to $\delta_E$. Both $\delta_E f$ and $k \delta_Y$ represent the identity equivalence on $q$. Thus, this induces by the Yoneda lemma a commutative diagram
\[\begin{tikzcd}
	Y & B \\
	{E_f} & {\underline{\mathsf{Eq}}_B(E)} \\
	{Y \times Y} & {B\times B}
	\arrow["f", two heads, from=1-1, to=1-2]
	\arrow["{\delta_Y}"', from=1-1, to=2-1]
	\arrow["{(id,id)}"', curve={height=24pt}, from=1-1, to=3-1]
	\arrow["{\delta_E \sim}", from=1-2, to=2-2]
	\arrow["{(id,id)}", curve={height=-30pt}, from=1-2, to=3-2]
	\arrow["k", two heads, from=2-1, to=2-2]
	\arrow["{(\partial_0',\partial_1')}", two heads, from=2-1, to=3-1]
	\arrow["\lrcorner"{anchor=center, pos=0.125}, draw=none, from=2-1, to=3-2]
	\arrow["{(\partial_0,\partial_1)}"', two heads, from=2-2, to=3-2]
	\arrow["{f \times f}"', two heads, from=3-1, to=3-2]
\end{tikzcd}\]
The map $\delta_E$ is a homotopy equivalence by univalence of $p$. Now, by the isomorphism $Y \times_B Y \cong (Y \times Y) \times_{(B \times B)} B$, $f$ is a homotopy monomorphism if and only if the outer rectangle of this diagram is a homotopy pullback. The lower square is a homotopy pullback, since it is a pullback of fibrations. By the pasting lemma for homotopy pullbacks \autocite[Lem. 6.2.4]{joyal_notes_2017}, therefore the upper square is a homotopy pullback if and only if $f$ is a homotopy monomorphism. But, by \autocite[Lem. 6.2.3]{joyal_notes_2017} the upper square is a homotopy pullback if and only if $\delta_X$ is a homotopy equivalence. Thus, $q$ is univalent if and only if $f$ is a homotopy monomorphism.
\end{proof}

\subsection{Homotopy Subobject Classifiers}
\label{sect: homotopy subobject classifiers tribe}

We will need to show later on that our $\infty$-category presented by the type theory via the $\infty$-localisation of its models has subobject classifiers. We define here the analogous notion of such classifiers in a tribe, that is, we define classifiers for homotopy monomorphisms or homotopy subobjects. The construction uses the categorical interpretation of the definition of the type $\mathtt{Prop}$ that we saw in Definition \ref{def: type theory Prop}. We will in particular define an object $\Omega_p$ for every univalent fibration $p$ whose construction is extending the definition of $U.\mathsf{isProp}$ that we saw in Definition \ref{def: Prop in contextual category} from contextual categories to $\pi$-tribes.

We start with establishing an alternative characterisation of those homotopy monic fibrations that are classified by some univalent fibration. The intuition is that a non-empty fiber of such a morphism is contractible if and only if it is contractible as fiber of the domain of the univalent fibration. 

Let $\cC$ be a tribe. Consider a fibration $f$ classified by a fibration $p$ as in the pullback
\[\begin{tikzcd}
    X & E \\
    Y & B
    \arrow["{q_f}", from=1-1, to=1-2]
    \arrow["f"', two heads, from=1-1, to=2-1]
    \arrow["\lrcorner"{anchor=center, pos=0.125}, draw=none, from=1-1, to=2-2]
    \arrow["p", two heads, from=1-2, to=2-2]
    \arrow["{\chi_f}"', from=2-1, to=2-2]
\end{tikzcd}\]
We will write $p \times p$ for the product of $p$ with itself in the fibrant slice $\cC/\!\!/B$, i.e. $p\times p$ is the fibration $E \times_B E \to B$ in the pullback square

    \[\begin{tikzcd}
    	{E\times_BE} & E \\
    	E & B
    	\arrow[two heads, from=1-1, to=1-2]
    	\arrow[two heads, from=1-1, to=2-1]
    	\arrow["{p\times p}"{description}, two heads, from=1-1, to=2-2]
    	\arrow["p", two heads, from=1-2, to=2-2]
    	\arrow["p"', two heads, from=2-1, to=2-2]
    \end{tikzcd}\]
Given any fibered path object factorisation $E \to P_p \to E \times_B E$ for $p$, pullback along $\chi_f$ will induce a fibered path object factorisation $X \to P_f \to X \times_Y X$ for $f$ as in the diagram below since the pullback functor between slices is a morphism tribes, hence preserves fibrations, anodyne maps and homotopy equivalences. Moreover, pullback preserves limits so that $\chi_f^*(p\times p) = \chi^*p \times \chi^*p = f \times f$. We denote as $q_f \times_{\chi_f} q_f$ the resulting morphism $X\times_Y X \to E \times_B E$. Indeed, this is the canonical morphism between the pullbacks. We can summarize this situation in the following diagram
\[\begin{tikzcd}
	X & E \\
	{P_f} & {P_p} \\
	{X \times_YX} & {E\times_BE} \\
	Y & B
	\arrow["{q_f}", from=1-1, to=1-2]
	\arrow["\sim"', tail, from=1-1, to=2-1]
	\arrow["\lrcorner"{anchor=center, pos=0.125}, draw=none, from=1-1, to=2-2]
	\arrow["\sim", tail, from=1-2, to=2-2]
	\arrow["r", from=2-1, to=2-2]
	\arrow["{(\partial_0',\partial_1')}"', two heads, from=2-1, to=3-1]
	\arrow["\lrcorner"{anchor=center, pos=0.125}, draw=none, from=2-1, to=3-2]
	\arrow["{(\partial_0,\partial_1)}", two heads, from=2-2, to=3-2]
	\arrow["{q_f\times_{\chi_f}q_f}", from=3-1, to=3-2]
	\arrow["{f\times f}"', two heads, from=3-1, to=4-1]
	\arrow["\lrcorner"{anchor=center, pos=0.125}, draw=none, from=3-1, to=4-2]
	\arrow["{p \times p}", two heads, from=3-2, to=4-2]
	\arrow["{\chi_f}"', from=4-1, to=4-2]
\end{tikzcd}\]

\begin{lemma}

    \label{lemma: characterisation homotopy monomorphisms classified by univalent fibration}
    A fibration $f \colon X \to Y$ classified by some univalent fibration $p \colon E \to B$ in a $\pi$-tribe via a diagram
    \[\begin{tikzcd}
    	X & E \\
    	Y & B
    	\arrow["{q_f}", from=1-1, to=1-2]
    	\arrow["f"', two heads, from=1-1, to=2-1]
    	\arrow["\lrcorner"{anchor=center, pos=0.125}, draw=none, from=1-1, to=2-2]
    	\arrow["p", two heads, from=1-2, to=2-2]
    	\arrow["{\chi_f}"', from=2-1, to=2-2]
    \end{tikzcd}\]
    is a homotopy monomorphism if and only if there is a homotopy
    \[\begin{tikzcd}
    	{X \times_Y X} && {P_p} \\
    	& {E\times_B E}
    	\arrow["H", from=1-1, to=1-3]
    	\arrow["{q_f\times_{\chi_f}q_f}"', from=1-1, to=2-2]
    	\arrow["{(\partial_0,\partial_1)}", two heads, from=1-3, to=2-2]
    \end{tikzcd}\]
where $P_p$ is a fibered path object of $p$ in the tribe $\cC/\!\!/B$.
    
\end{lemma}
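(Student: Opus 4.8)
The plan is to reduce the statement to part (3) of Lemma \ref{lemma: different characterisations of homotopy monic fibration}, which characterises $f$ as a homotopy monomorphism precisely when the fibration $(\partial_0',\partial_1') \colon P_f \to X \times_Y X$ admits a section, and then to translate such a section into the desired homotopy $H$ via the universal property of the pullback defining $P_f$.

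First I would record that, by construction, $P_f$ is the pullback of $(\partial_0,\partial_1) \colon P_p \to E \times_B E$ along $q_f \times_{\chi_f} q_f$. This is exactly the middle square of the large diagram preceding the lemma, and it is a pullback because the pullback functor $\chi_f^*$ is exact and hence carries the path-object factorisation of $p$ in $\cC/\!\!/B$ to that of $f$ in $\cC/\!\!/Y$. Thus the two projections out of $P_f$ are $(\partial_0',\partial_1') \colon P_f \to X \times_Y X$ and $r \colon P_f \to P_p$, subject to the identity $(\partial_0,\partial_1)\,r = (q_f \times_{\chi_f} q_f)\,(\partial_0',\partial_1')$.

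The core of the argument is then a bijection, supplied by this pullback, between sections of $(\partial_0',\partial_1')$ and lifts $H \colon X \times_Y X \to P_p$ of $q_f \times_{\chi_f} q_f$ through $(\partial_0,\partial_1)$. In one direction, a section $s$ yields $H := r s$, and the pullback compatibility gives $(\partial_0,\partial_1)\,H = (q_f \times_{\chi_f} q_f)\,(\partial_0',\partial_1')\,s = q_f \times_{\chi_f} q_f$, which is precisely the commutativity of the triangle in the statement; intuitively, $H$ is the fibered homotopy $q_f\pi_1 \sim q_f\pi_2$ obtained by pushing the section forward. In the other direction, a map $H$ with $(\partial_0,\partial_1)\,H = q_f \times_{\chi_f} q_f$ makes the pair $(id_{X \times_Y X}, H)$ into a cone over the pullback, inducing a unique $s \colon X \times_Y X \to P_f$ with $(\partial_0',\partial_1')\,s = id$, i.e. a section. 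Combining this bijection with Lemma \ref{lemma: different characterisations of homotopy monic fibration} yields the claimed equivalence.

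I do not expect a genuine obstacle: once the middle square is recognised as a pullback, the argument is a formal manipulation of universal properties. The only point requiring care is bookkeeping — checking that the specific path object $P_f$ induced from the fixed $P_p$ is the one to which part (3) of Lemma \ref{lemma: different characterisations of homotopy monic fibration} is applied, so that its ``for any fibered path object'' clause legitimately covers this case. I would also note in passing that univalence of $p$ is not actually used in this equivalence; it enters only in the surrounding construction of the classifying object $\Omega_p$.
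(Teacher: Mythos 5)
Your proposal is correct and follows essentially the same route as the paper: the paper likewise observes that $P_f$ is the pullback of $(\partial_0,\partial_1)$ along $q_f\times_{\chi_f}q_f$, that sections of $(\partial_0',\partial_1')$ correspond bijectively to morphisms $H$ over $E\times_B E$, and then invokes Lemma \ref{lemma: different characterisations of homotopy monic fibration}. Your added remarks — that the ``for any fibered path object'' clause covers the induced $P_f$, and that univalence of $p$ is not needed here — are accurate refinements of the same argument.
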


\begin{proof}
 Let $(\partial_0',\partial_1') \colon P_f \to X \times_Y X$ denote the pullback of $(\partial_0,\partial_1)$ along $q_f\times_{\chi_f}q_f$. Now, sections of this pullback are in bijection with morphisms $H \colon q_f\times_{\chi_f}q_f \to (\partial_0,\partial_1)$ over $E \times_B E$.  By Lemma \ref{lemma: different characterisations of homotopy monic fibration}, $f$ is a homotopy monomorphism if and only if the fibration $(\partial_0',\partial_1')$ has such a section. Thus, such a morphism $H$ exists if and only if $f$ is a homotopy monic fibration.
\end{proof}

\begin{definition}
\label{def: homotopy subobject classifier tribe}
    Let $S$ be a local class of fibrations. A \textit{homotopy subobject classifier} for $S$ is a homotopy monic fibration $\top \colon \mathsf{El}(\Omega_S) \to \Omega_S$ such that $\top$ is univalent and for every homotopy monic fibration $f \colon A \to X$ in $S$ there is a pullback
    \[\begin{tikzcd}
    	A & {\mathsf{El}(\Omega_S)} \\
    	X & {\Omega_S}
    	\arrow[from=1-1, to=1-2]
    	\arrow["f"', two heads, from=1-1, to=2-1]
    	\arrow["\lrcorner"{anchor=center, pos=0.125}, draw=none, from=1-1, to=2-2]
    	\arrow["\top", two heads, from=1-2, to=2-2]
    	\arrow[from=2-1, to=2-2]
    \end{tikzcd}\]
\end{definition}
    
\begin{remark}
    In an ordinary category, a subobject classifier $1 \to \Omega$ is a monomorphism such that every monomorphism $A \to X$ is (up to isomorphism) a pullback of the subobject classifier along a unique map $X \to \Omega$  inducing by pullback a representation
    $$\mathsf{Hom}(X, \Omega) \cong \mathsf{Sub}(X)$$
    of the contravariant $\Set$-valued  subobject functor $\mathsf{Sub}(-)$. For a homotopy subobject classifier, we want that every homotopy monomorphism is (up to homotopy equivalence) the pullback along a homotopy unique map $X \to \Omega$. This uniqueness requirement is encoded in our definition by univalence (Def. \ref{def:univalent fibration-categorical def}), namely by demanding that the inclusion of the identity $\delta_{\mathsf{El}(\Omega_S)}$ is a homotopy equivalence. This turns the object of equivalences $\myuline{\mathsf{Eq}}_{\Omega_S}(\mathsf{El}(\Omega_S))$ into a path object and maps into it into homotopies. Thus, we get by Remark \ref{remark: univalent fibration is small object classifier} and Proposition \ref{prop: pullback of univalent fibration is homotopy unique} via pullback a bijection

    \vspace{-0.5cm}

    $$\{\text{htpy. classes of morphism } X \to \Omega_S\} \cong \{\text{htpy. equiv. classes of htpy. monic fibrations } A \to X \in S\}.$$
    
\end{remark}

    We will now show that from a given univalent fibration $p \colon E \to B$ in a $\pi$-tribe $\cC$, we can always construct a homotopy subobject classifier  $\top \colon \mathsf{El}(\Omega_p) \to \Omega_p$ for the local class $S_p$ induced by $p$. The construction is analogous to the construction of an object of propositions for some universe in type theory that we saw in Definition \ref{def: type theory Prop}.

\begin{construction}

    \label{constr: homotopy subobject classifier from univalent fibration}

    Let $p \colon  E \to B$ be a univalent fibration in a $\pi$-tribe $\cC$. Let $p \times p$ be the product of $p$ with itself in $\cC/\!\!/B$ given by the pullback $E \times_B E \to B$. By partial local cartesian closedness there is a functor 
    $$\Pi_{p \times p} \colon \cC/\!\!/E \times_B E \to \cC/\!\!/B$$
    that is partial right adjoint to pullback 
    $$(p\times p)^* \colon \cC/B \to \cC/E \times_B E.$$
    Let 
    $$E \xrightarrow{\sim} P_p \xrightarrow{(\partial_0,\partial_1)} E \times_B E$$
    be a fibered path object factorisation of $p$ in $\cC/\!\!/B$. Define $pr_p \colon  \Omega_p \to B$ to be the fibration
    $$\Pi_{p\times p}(\partial_0,\partial_1)\colon \Pi_{p \times p}(P_p) \to B$$
    such that we have morphisms
    \[\begin{tikzcd}
    	{P_p} & {\Omega_p:=\Pi_{p \times p}(P_p)} \\
    	{E \times_B E} & B
    	\arrow["{(\partial_0,\partial_1)}"', two heads, from=1-1, to=2-1]
    	\arrow["{Pr_p := \Pi_{p\times p}(\partial_0,\partial_1)}", two heads, from=1-2, to=2-2]
    	\arrow["{p\times p}"', two heads, from=2-1, to=2-2]
    \end{tikzcd}\]
    Define $\top \colon \mathsf{El}(\Omega_p) \to \Omega_p$ to be the pullback

    \[\begin{tikzcd}
    	{\mathsf{El}(\Omega_p)} & E \\
    	{\Omega_p} & B
    	\arrow[two heads, from=1-1, to=1-2]
    	\arrow["\top"', two heads, from=1-1, to=2-1]
    	\arrow["\lrcorner"{anchor=center, pos=0.125}, draw=none, from=1-1, to=2-2]
    	\arrow["p", two heads, from=1-2, to=2-2]
    	\arrow["{Pr_p}"', two heads, from=2-1, to=2-2]
    \end{tikzcd}\]

\end{construction}

We will show that $\top$ is in fact homotopy subobject classifier for $S_p$. Observe that $Pr_p \colon \Omega_p \to B$ has by adjointness the universal property that there is a correspondence of morphisms $\overline{H}$ and $H$
\[\begin{tikzcd}
	Y && {\Omega_p} && {X \times_Y X} && {P_p} \\
	& B &&&& {E\times_B E}
	\arrow["{\overline{H}}", from=1-1, to=1-3]
	\arrow["{\chi_f}"', from=1-1, to=2-2]
	\arrow["{Pr_p}", two heads, from=1-3, to=2-2]
	\arrow["H", from=1-5, to=1-7]
	\arrow["{q_f\times_{\chi_f}q_f}"', from=1-5, to=2-6]
	\arrow["{(\partial_0,\partial_1)}", two heads, from=1-7, to=2-6]
\end{tikzcd}\]
for any pullback

    \[\begin{tikzcd}
    	X & E \\
    	Y & B
    	\arrow["{q_f}", from=1-1, to=1-2]
    	\arrow["f"', two heads, from=1-1, to=2-1]
    	\arrow["\lrcorner"{anchor=center, pos=0.125}, draw=none, from=1-1, to=2-2]
    	\arrow["p", two heads, from=1-2, to=2-2]
    	\arrow["{\chi_f}"', from=2-1, to=2-2]
    \end{tikzcd}\]
and so in light of Lemma \ref{lemma: characterisation homotopy monomorphisms classified by univalent fibration} we get that the morphism $Pr_p$ is a fibration representing homotopy monic fibrations classified by $p$. We first show that $\top$ constructed this way is itself a homotopy monomorphism.

\begin{lemma}
\label{lemma: top is homotopy monomorphism}
    $\top \colon \mathsf{El}(\Omega_p) \to \Omega_p$ is a homotopy monomorphism.
\end{lemma}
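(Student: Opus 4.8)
The plan is to reduce everything to the characterisation of Lemma \ref{lemma: characterisation homotopy monomorphisms classified by univalent fibration}. By Construction \ref{constr: homotopy subobject classifier from univalent fibration}, the map $\top \colon \mathsf{El}(\Omega_p) \to \Omega_p$ is the pullback of the univalent fibration $p \colon E \to B$ along $Pr_p \colon \Omega_p \to B$, so it is a fibration classified by $p$ with classifying map $\chi_\top = Pr_p$. Applying Lemma \ref{lemma: characterisation homotopy monomorphisms classified by univalent fibration} to this pullback (taking $Y = \Omega_p$, $f = \top$ and $\chi_f = Pr_p$), the statement that $\top$ is a homotopy monomorphism is equivalent to the existence of a homotopy
$$H \colon \mathsf{El}(\Omega_p) \times_{\Omega_p} \mathsf{El}(\Omega_p) \to P_p$$
over $E \times_B E$, i.e. a morphism $H \colon (q_\top \times_{Pr_p} q_\top) \to (\partial_0,\partial_1)$ in $\cC/E \times_B E$.

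The key point is that $\Omega_p = \Pi_{p \times p}(P_p)$ was built exactly so that such homotopies are represented by sections over $B$. First I would invoke the universal property recorded immediately before the lemma: by the partial adjunction $(p \times p)^* \dashv \Pi_{p \times p}$ there is a bijection
$$\mathsf{Hom}_{\cC/B}(Pr_p, Pr_p) \cong \mathsf{Hom}_{\cC/E \times_B E}\big((p \times p)^* Pr_p,\, (\partial_0,\partial_1)\big),$$
and the object $(p \times p)^* Pr_p$ is precisely the comparison map $q_\top \times_{Pr_p} q_\top \colon \mathsf{El}(\Omega_p) \times_{\Omega_p} \mathsf{El}(\Omega_p) \to E \times_B E$ obtained by pulling back $Pr_p$ along $p \times p$. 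I would then let $H$ be the morphism corresponding under this bijection to the identity $\mathrm{id}_{\Omega_p} \in \mathsf{Hom}_{\cC/B}(Pr_p, Pr_p)$; this $H$ is a morphism over $E \times_B E$ of exactly the required shape, and Lemma \ref{lemma: characterisation homotopy monomorphisms classified by univalent fibration} then delivers that $\top$ is a homotopy monomorphism.

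I do not expect a genuine obstacle, since the substance of the argument is front-loaded into Construction \ref{constr: homotopy subobject classifier from univalent fibration} and the characterisation Lemma \ref{lemma: characterisation homotopy monomorphisms classified by univalent fibration}; conceptually, $\top$ is homotopy monic because its own classifying map $Pr_p$ is, tautologically, in the image of the construction. The only step requiring a little care is verifying that the abstract pullback $(p \times p)^* Pr_p$ really coincides with the concrete comparison map $q_\top \times_{Pr_p} q_\top$ appearing in the diagram before the lemma; this is routine, following from stability of pullbacks along fibrations and the pasting lemma for pullbacks, together with the identification $\chi_\top^*(p \times p) = \top \times \top$ already used in the setup.
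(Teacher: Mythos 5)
Your proposal is correct and is essentially identical to the paper's proof: both reduce to Lemma \ref{lemma: characterisation homotopy monomorphisms classified by univalent fibration} and produce the required homotopy over $E \times_B E$ as the adjoint transpose, under $(p\times p)^* \dashv \Pi_{p\times p}$, of the identity on $\Omega_p = \Pi_{p\times p}(P_p)$ over $B$. The identification of $(p\times p)^*Pr_p$ with $q_\top \times_{Pr_p} q_\top$ that you flag as the one point needing care is exactly the universal property the paper records immediately before the lemma.
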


\begin{proof}

This follows directly using Lemma \ref{lemma: characterisation homotopy monomorphisms classified by univalent fibration}: The identity fits into the following diagram on the left, inducing the desired homotopy $K$ by adjoint transposition on the right which witnesses that the map $\top$ is a homotopy monomorphism.
\[\begin{tikzcd}
	{\Omega_p} && {\Omega_p} && {\mathsf{El}(\Omega_p)\times_{\Omega_p} \mathsf{El}(\Omega_p)} && {P_p} \\
	& B &&&& {E \times_B E}
	\arrow["id", from=1-1, to=1-3]
	\arrow["{Pr_p}"', from=1-1, to=2-2]
	\arrow["{Pr_p}", from=1-3, to=2-2]
	\arrow["K", from=1-5, to=1-7]
	\arrow[from=1-5, to=2-6]
	\arrow["{(\partial_0,\partial_1)}", from=1-7, to=2-6]
\end{tikzcd}\]
\end{proof}

Next, we will show that $Pr_p \colon \Omega_p \to B$ is itself a homotopy monomorphism for any fibration $p$. This is the categorical version of the type-theoretic proof that the type of being a proposition is itself a proposition, i.e. $\mathtt{isProp}(\mathtt{isProp}(X))$ is always inhabited. 

\begin{proposition}

    \label{prop: homotopy subobject classifier is a proposition}
    If $p \colon X\to Y$ is a univalent fibration in a $\pi$-tribe $\cC$, then the morphism $Pr_p \colon \Omega_p \to B$ is itself a homotopy monomorphism.
\end{proposition}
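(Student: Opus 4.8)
The plan is to show that $Pr_p$ is a homotopy monomorphism by applying the characterization in Lemma \ref{lemma: different characterisations of homotopy monic fibration} (4): it suffices to exhibit, for any two morphisms $\overline{H}, \overline{K} \colon Z \to \Omega_p$ with $Pr_p \overline{H} = Pr_p \overline{K}$ (equivalently, both lying over a common map $\chi \colon Z \to B$), a homotopy $\overline{H} \sim \overline{K}$. The key idea is to unwind what such morphisms are via the defining adjunction $(p\times p)^* \dashv \Pi_{p\times p}$. A map $\overline{H} \colon Z \to \Omega_p$ over $\chi \colon Z \to B$ transposes to a map $H \colon \chi^*(E\times_B E) \to P_p$ over $E \times_B E$, that is, to a section of the pullback $(\partial_0',\partial_1') \colon P_{\chi} \to Z \times_{?} Z$ of the path-fibration $(\partial_0,\partial_1)$. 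Since $(\partial_0,\partial_1)$ is the path object of the fibration $p$, and $P_p$ is a \emph{fibered} path object in $\cC/\!\!/B$, these transposed sections are precisely homotopies exhibiting the relevant fibers as contractible, exactly as in Lemma \ref{lemma: characterisation homotopy monomorphisms classified by univalent fibration}.

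First I would set up the transposition carefully, writing $\overline{H}$ and $\overline{K}$ as their transposes $H, K$ landing in $P_p$ over $E \times_B E$. By Lemma \ref{lemma: different characterisations of function extensionality} (3), the homotopy relation is compatible with adjoint transposition along $p^* \dashv \Pi_p$, so it suffices to prove $H \sim K$ in the fibered slice over $E \times_B E$. Now both $H$ and $K$ are sections of the path-object fibration $(\partial_0,\partial_1) \colon P_p \to E\times_B E$ over a common base. The crucial observation is that $(\partial_0,\partial_1)$ is the path fibration of the \emph{univalent} fibration $p$, and by Proposition \ref{prop: projection object of equivalences is homotopy monomorphism} together with the univalence of $p$ (Definition \ref{def:univalent fibration-categorical def}), the diagonal $\delta_E \colon B \to \myuline{\mathsf{Eq}}_B(E)$ is a homotopy equivalence. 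This forces the path fibration $(\partial_0,\partial_1)$ to be a homotopy monomorphism by Lemma \ref{lemma: different characterisations of homotopy monic fibration}: univalence says the space of self-equivalences of any fiber is contractible, which translates into $(\partial_0,\partial_1)$ being trivial over the diagonal, hence homotopy monic.

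The main step is then to conclude that any two sections $H, K$ of a homotopy monic fibration over a common base are homotopic. Since $(\partial_0,\partial_1)$ is homotopy monic, by Lemma \ref{lemma: different characterisations of homotopy monic fibration} (4) applied with $g = H$, $h = K$ and the common composite $(\partial_0,\partial_1) H = (\partial_0,\partial_1) K = \mathrm{id}$ being a fibration, we obtain $H \sim K$ directly. Transposing back via Lemma \ref{lemma: different characterisations of function extensionality} (3) yields $\overline{H} \sim \overline{K}$, completing the argument. The step I expect to be the main obstacle is verifying that the fibered path object $(\partial_0,\partial_1) \colon P_p \to E \times_B E$ is genuinely homotopy monic as a consequence of univalence of $p$; this requires threading the univalence condition on $\delta_E$ (which concerns the \emph{total} object of equivalences $\myuline{\mathsf{Eq}}_B(E)$, combining left- and right-invertibility and their witnessing homotopies) through to a statement purely about the \emph{path} fibration. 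The link is that univalence makes $\myuline{\mathsf{Eq}}_B(E)$ a path object for $B$, and the canonical comparison between this object of equivalences and the path object $P_p$ of $p$ is, on the relevant fibers, a homotopy equivalence, so that the homotopy-monic property transfers. I would make this precise by combining Proposition \ref{prop: pullback of univalent fibration is homotopy unique} (homotopy uniqueness of classifying maps) with the mapping-path-object computation already carried out in the proof of Proposition \ref{prop: projection object of equivalences is homotopy monomorphism}.
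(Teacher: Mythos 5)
Your overall skeleton (transpose $\overline{H},\overline{K}$ along $(p\times p)^*\dashv \Pi_{p\times p}$, prove the transposes are homotopic, transpose back using Lemma \ref{lemma: different characterisations of function extensionality}) matches the paper's, but the step you yourself flag as the main obstacle is where the argument breaks: the claim that univalence of $p$ forces the path fibration $(\partial_0,\partial_1)\colon P_p \to E\times_B E$ to be a homotopy monomorphism is false. By Lemma \ref{lemma: different characterisations of homotopy monic fibration}, $(\partial_0,\partial_1)$ being homotopy monic would mean that any two points in a fiber of $(\partial_0,\partial_1)$ --- i.e.\ any two paths in a fiber of $p$ with the same endpoints --- are themselves connected by a path; that is, it would force every fiber of $p$ to be $0$-truncated, which fails for any interesting univalent fibration. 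Your supporting claim that ``univalence says the space of self-equivalences of any fiber is contractible'' is also wrong: univalence identifies self-equivalences of a fiber with loops in the base, which need not be contractible. So Lemma \ref{lemma: different characterisations of homotopy monic fibration}(4) cannot be applied to $(\partial_0,\partial_1)$ as you propose (note also that your $H,K$ are not sections of $(\partial_0,\partial_1)$ but maps over $E\times_B E$ out of $q_f\times_{\chi_f}q_f$, so the displayed identity $(\partial_0,\partial_1)H=\mathrm{id}$ does not typecheck; the genuine sections live over $X\times_Y X$).

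The fix is simpler than what you attempt and does not use univalence at all --- the paper's proof never invokes it. The transposes of $\overline{H}$ and $\overline{K}$ correspond to sections $h,k$ of the \emph{pulled-back} path fibration $(\partial_0',\partial_1')\colon P_f\to X\times_Y X$, where $f=\chi_f^*p$ and $P_f$ is a fibered path object of $f$. The mere existence of one such section is condition (3) of Lemma \ref{lemma: different characterisations of homotopy monic fibration} for $f$; hence $f$ is a homotopy monomorphism and, by $(1)\Rightarrow(2)$ of the same lemma, $(\partial_0',\partial_1')$ is a trivial fibration. Any two sections of a trivial fibration are homotopic, so $h\sim k$, hence $H\sim K$ after composing with the induced map $P_f\to P_p$, and transposing back gives $\overline{H}\sim\overline{K}$. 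In short, the triviality you need lives over $X\times_Y X$, not over $E\times_B E$, and it is supplied by the data you are already given rather than by univalence; as written, your key step rests on a false statement and the proof does not go through without this repair.
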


\begin{proof}
    By Lemma \ref{lemma: different characterisations of homotopy monic fibration} it suffices to show that for any $\overline{H},\overline{K} \colon Y \to \Omega_p$ such that $Pr_p\overline{H} = Pr_p\overline{K} = \chi_f$ is some morphism $\chi_f\colon Y \to B$, we have $\overline{H}  \sim \overline{K} $. Denote as $f := \chi_f^*p$ given by
    \[\begin{tikzcd}
    	X & E \\
    	Y & B
    	\arrow["{q_f}", from=1-1, to=1-2]
    	\arrow["f"', two heads, from=1-1, to=2-1]
    	\arrow["\lrcorner"{anchor=center, pos=0.125}, draw=none, from=1-1, to=2-2]
    	\arrow["p", two heads, from=1-2, to=2-2]
    	\arrow["{\chi_f}"', from=2-1, to=2-2]
    \end{tikzcd}\]
    By the universal property of $\Omega_p$, $\overline{H}$ and $\overline{K}$ correspond to morphisms $H$ and $K$ from $q_f\times_{\chi_f}q_f$ to $ (\partial_0, \partial_1)$ over $E \times_B E$. These correspond bijectively to sections $h$ and $k$ of the pullback $(\partial_0',\partial_1') = (q_f\times_{\chi_f}q_f)^*(\partial_0, \partial_1)$ which is the induced fibration $P_f \to X \times_Y X$.

By Lemma \ref{lemma: different characterisations of homotopy monic fibration}, $(\partial_0',\partial_1')$ is therefore a trivial fibration. However, any two sections of a trivial fibration are homotopic, hence $h \simeq k$, from which follows that $H \sim K$ since $H = rh$ and $K=rk$ for $r$ being the induced map $P_f \to P_p$. Since the homotopy relation is stable under adjoint transposition along the partial dependent product adjunction by Lemma \ref{lemma: different characterisations of function extensionality}, we get $\overline{H} \sim \overline{K}$ and so $Pr_p$ is a homotopy monomorphism. 
\end{proof}

\begin{theorem}
\label{prop: constructed top is homotopy subobject classifier}
        If $p \colon E \to B$ is univalent in a $\pi$-tribe $\cC$, then $\top \colon \mathsf{El}(\Omega_p) \to \Omega_p$ is homotopy subobject classifier for $S_p$, i.e. for homotopy monic fibrations that are classified by the univalent fibration $p$.
\end{theorem}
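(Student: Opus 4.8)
The plan is to verify the three requirements of Definition \ref{def: homotopy subobject classifier tribe} in turn: that $\top$ is a homotopy monic fibration, that $\top$ is univalent, and that every homotopy monic fibration in $S_p$ arises as a strict pullback of $\top$.

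The first two are immediate consequences of the preceding results. That $\top$ is a homotopy monomorphism is precisely Lemma \ref{lemma: top is homotopy monomorphism}. For univalence, recall from Construction \ref{constr: homotopy subobject classifier from univalent fibration} that $\top = Pr_p^* p$ is the pullback of the univalent fibration $p$ along the fibration $Pr_p \colon \Omega_p \to B$. By Proposition \ref{prop: homotopy subobject classifier is a proposition}, $Pr_p$ is itself a homotopy monomorphism, so Proposition \ref{prop: univalence is preserved and reflected by pullback along homotopy monomorphisms} applies directly and yields that $\top$ is univalent.

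For the classification property, I would take a homotopy monic fibration $f \colon A \to X$ in $S_p$. Since $f \in S_p$, it is by definition an actual pullback $f = \chi_f^* p$ of $p$ along some $\chi_f \colon X \to B$, with comparison map $q_f \colon A \to E$. Because $f$ is a homotopy monomorphism, Lemma \ref{lemma: characterisation homotopy monomorphisms classified by univalent fibration} supplies a homotopy $H \colon A \times_X A \to P_p$ over $E \times_B E$ lying above $q_f \times_{\chi_f} q_f$. Transposing $H$ along the partial adjunction $(p \times p)^* \dashv \Pi_{p \times p}$ which defines $\Omega_p = \Pi_{p \times p}(P_p)$ produces a map $\overline{H} \colon X \to \Omega_p$ over $B$, that is, satisfying $Pr_p \overline{H} = \chi_f$ on the nose.

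It then remains to exhibit $f$ as a strict pullback of $\top$ along $\overline{H}$, which is a routine application of the pasting lemma for pullbacks. The defining square of $\top = Pr_p^* p$ is a pullback with bottom edge $Pr_p$, and the defining square of $f = \chi_f^* p$ is a pullback with bottom edge $\chi_f = Pr_p \overline{H}$; juxtaposing them so that $\overline{H}$ factors $\chi_f$ through $Pr_p$ realises the latter square as the outer rectangle obtained from the former by prepending $\overline{H}$, and pasting makes the left-hand square (with bottom edge $\overline{H}$ and right edge $\top$) a pullback. This is exactly the classifying diagram required by Definition \ref{def: homotopy subobject classifier tribe}. Since all the substantive work has been carried out in the cited lemmas, I anticipate no genuine obstacle; the only point meriting care is the bookkeeping of strictness, namely ensuring the factorisation $\chi_f = Pr_p \overline{H}$ is an honest equality (which the adjoint transposition over $B$ guarantees) so that the pasting yields a strict rather than merely a homotopy pullback.
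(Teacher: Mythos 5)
Your proposal is correct and follows essentially the same route as the paper's own proof: Lemma \ref{lemma: top is homotopy monomorphism} for homotopy monicity, Lemma \ref{lemma: characterisation homotopy monomorphisms classified by univalent fibration} plus adjoint transposition along $(p\times p)^* \dashv \Pi_{p\times p}$ to produce $\overline{H}$ with $Pr_p\overline{H}=\chi_f$, pullback pasting for the classifying square, and Propositions \ref{prop: homotopy subobject classifier is a proposition} and \ref{prop: univalence is preserved and reflected by pullback along homotopy monomorphisms} for univalence of $\top$. Your explicit remark that the transposition takes place over $B$ and hence yields a strict factorisation is exactly the point that makes the pasting argument work.
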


\begin{proof}

By Lemma \ref{lemma: top is homotopy monomorphism}, $\top$ is a homotopy monic fibration. Let $f \colon X \to Y$ be any homotopy monic fibration that is classified by the univalent fibration $p$. Then there is a pullback

\[\begin{tikzcd}
    X & E \\
    Y & B
    \arrow["{q_f}", from=1-1, to=1-2]
    \arrow["f"', two heads, from=1-1, to=2-1]
    \arrow["\lrcorner"{anchor=center, pos=0.125}, draw=none, from=1-1, to=2-2]
    \arrow["p", two heads, from=1-2, to=2-2]
    \arrow["{\chi_f}"', from=2-1, to=2-2]
\end{tikzcd}\]
Since $f$ is a homotopy monic fibration, there is a homotopy by Lemma \ref{lemma: characterisation homotopy monomorphisms classified by univalent fibration}
\[\begin{tikzcd}
	{X \times_Y X} && {P_p} \\
	& {E\times_B E}
	\arrow["H", from=1-1, to=1-3]
	\arrow["{q_f\times_{\chi_f}q_f}"', from=1-1, to=2-2]
	\arrow["{(\partial_0,\partial_1)}", two heads, from=1-3, to=2-2]
\end{tikzcd}\]
By adjoint transposition, this corresponds to a morphism in the slice over $B$
\[\begin{tikzcd}
	Y && {\Pi_{p\times p}(P_p) = \Omega_p} \\
	& B
	\arrow["{\overline{H}}", from=1-1, to=1-3]
	\arrow["{\chi_f}"', from=1-1, to=2-2]
	\arrow["{Pr_p}", from=1-3, to=2-2]
\end{tikzcd}\]
Thus, we can fit this data into a commutative diagram
\[\begin{tikzcd}
	X & {\mathsf{El}(\Omega_p)} & E \\
	Y & {\Omega_p} & B
	\arrow["h"', dotted, from=1-1, to=1-2]
	\arrow["{q_f}", curve={height=-12pt}, from=1-1, to=1-3]
	\arrow["f"', two heads, from=1-1, to=2-1]
	\arrow[two heads, from=1-2, to=1-3]
	\arrow["\top"', two heads, from=1-2, to=2-2]
	\arrow["\lrcorner"{anchor=center, pos=0.125}, draw=none, from=1-2, to=2-3]
	\arrow["p", two heads, from=1-3, to=2-3]
	\arrow["{\overline{H}}", from=2-1, to=2-2]
	\arrow["{\chi_f}"', curve={height=18pt}, from=2-1, to=2-3]
	\arrow["{Pr_p}", two heads, from=2-2, to=2-3]
\end{tikzcd}\]
where the dashed arrow $h$ is the unique arrow induced by the universal property of the pullback. Since the right square and the outer rectangle of this diagram are a pullback, so is the left square by the pullback lemma.

It remains to show that $\top$ is univalent. By Proposition \ref{prop: homotopy subobject classifier is a proposition}, $Pr_p$ is a homotopy monomorphism. Since $p$ is univalent and $\top$ is the pullback of $p$ along $Pr_p$, Proposition \ref{prop: univalence is preserved and reflected by pullback along homotopy monomorphisms} implies that $\top$ is univalent. Hence, $\top$ is a homotopy subobject classifier.
\end{proof}

\subsection{Models of Univalence are Univalent Tribes}

Let $\cC$ be a categorical model of dependent type theory with a universe $U.\mathsf{El}$ satisfying the univalence axiom. We will show that $p_{El} \colon  U.\mathsf{El} \to U$ is a univalent fibration in the $\pi$-tribe structure on $\cC$.

\begin{lemma}
    \label{lemma: equivalence context is object of equivalences}
    If $\Gamma.A$ and $\Gamma.B$ are context extensions of $\Gamma$, then there is a canonical homotopy equivalence $\Gamma.\mathsf{Equiv(A,B)} \simeq \myuline{\mathsf{Eq}}(\Gamma.A,\Gamma.B)$ over $\Gamma$.
\end{lemma}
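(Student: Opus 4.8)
The plan is to work throughout in the fibrant slice $\cC/\!\!/\Gamma$, which is itself a $\pi$-tribe, regarding $\Gamma.A$ and $\Gamma.B$ as its objects (the fibrations $p_A,p_B$ over $\Gamma$); then $\myuline{\mathsf{Eq}}(\Gamma.A,\Gamma.B)$ is Construction \ref{constr: object of equiv} carried out in $\cC/\!\!/\Gamma$, and both it and $\Gamma.\mathsf{Equiv}(A,B)$ are objects over $\Gamma$. The guiding observation is that both objects are assembled in exactly the same shape from three ingredients --- function objects, composition maps, and path objects --- so it suffices to identify the ingredients and then invoke invariance of homotopy pullbacks. First I would identify the function objects: by the construction of the $\pi$-tribe internal hom as a dependent product, $\Gamma.[A,B] = \Gamma.\Pi(A,p_A^*B)$ is canonically isomorphic to $\uHom(\Gamma.A,\Gamma.B)$ in $\cC/\!\!/\Gamma$, the $\Pi$-structure furnishing a choice of the partial right adjoint $\Pi_{p_A}$; likewise for $[B,A]$, $[A,A]$ and $[B,B]$.

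Next I would match the composition maps. Both the contextual $\mathsf{comp}$ of Definition \ref{def: linv in contextual category}, built from the evaluation maps $\mathsf{app}$, and the internal composition $c_{B,A,B}$ introduced before Construction \ref{constr: object of equiv} as a transpose of iterated evaluation, are characterised by the same universal property: under the identification $\Gamma.[X,Y]\cong\uHom(\Gamma.X,\Gamma.Y)$ and transposition, both send a pair $(f,g)$ to the composite $fg$. By uniqueness of adjoint transposes they coincide up to the canonical isomorphism, and $const_B$ corresponds to the constant identity $1_B$. Consequently the contextual $\Gamma.[A,B].\mathsf{RInv}_{A,B}$, projected to $\uHom(\Gamma.A,\Gamma.B)$, and the categorical $\myuline{\mathsf{RInv}}(\Gamma.A,\Gamma.B)$ are both the pullback of a path object of $\uHom(\Gamma.B,\Gamma.B)$ along one and the same map $(c_{B,A,B},const_B)$; the only discrepancy is which path object is used, and the same applies to $\mathsf{LInv}$.

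The main obstacle is therefore the comparison of the path objects. Construction \ref{constr: object of equiv} uses $\uHom(\Gamma.B,P_B)$, which is a path object for $\uHom(\Gamma.B,\Gamma.B)$ precisely because $\cC$ is a $\pi$-tribe and function extensionality guarantees that $\uHom(\Gamma.B,-)$ sends the anodyne map $\Gamma.B \xrightarrow{\sim} P_B$ to a homotopy equivalence (Lemma \ref{lemma: different characterisations of function extensionality}). The contextual definition instead uses $\mathsf{Id}_{[B,B]}$, which is a path object for $[B,B]\cong\uHom(\Gamma.B,\Gamma.B)$ by the $\mathtt{Id}$-structure of the tribe. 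Since any two path objects of the same object are factorisations of the diagonal into a weak equivalence followed by a fibration, and such factorisations are unique up to homotopy equivalence over the product in a fibration category, the two path objects are homotopy equivalent over $\uHom(\Gamma.B,\Gamma.B)\times_\Gamma\uHom(\Gamma.B,\Gamma.B)$. As pulling back a fibration along a map is a homotopy pullback and homotopy pullbacks are invariant under homotopy equivalence of the cospan, we get $\mathsf{RInv}_{A,B}\simeq\myuline{\mathsf{RInv}}(\Gamma.A,\Gamma.B)$ over $\uHom(\Gamma.A,\Gamma.B)$, and dually $\mathsf{LInv}_{A,B}\simeq\myuline{\mathsf{LInv}}(\Gamma.A,\Gamma.B)$.

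Finally I would assemble. Both $\Gamma.\mathsf{Equiv}(A,B)$ and $\myuline{\mathsf{Eq}}(\Gamma.A,\Gamma.B)$ are the fibre product of their respective $\mathsf{RInv}$ and $\mathsf{LInv}$ over $\uHom(\Gamma.A,\Gamma.B)\cong\Gamma.[A,B]$ --- the contextual $\mathsf{isEquiv}_{A,B}=\mathsf{LInv}_{A,B}\times\mathsf{RInv}_{A,B}$ followed by $\Sigma$ over $[A,B]$ is exactly this fibre product viewed in $\cC/\!\!/\Gamma$. Since the componentwise equivalences of the previous paragraph commute with the projections to $\uHom(\Gamma.A,\Gamma.B)$, they constitute an equivalence of the two cospans, and invariance of homotopy pullbacks yields the desired canonical homotopy equivalence $\Gamma.\mathsf{Equiv}(A,B)\simeq\myuline{\mathsf{Eq}}(\Gamma.A,\Gamma.B)$ over $\Gamma$. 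I expect the genuinely delicate point to be the bookkeeping of the strict contextual substitutions (the $\mathsf{app}$ maps and the exchange map $\mathsf{exch}$ of Definition \ref{def: linv in contextual category}) against the homotopy-theoretic internal-hom maps, together with confirming that the path-object comparison can be carried out compatibly on the $\mathsf{LInv}$ and $\mathsf{RInv}$ sides simultaneously.
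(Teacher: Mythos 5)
Your proposal is correct and follows essentially the same route as the paper's proof: identify $\Gamma.[X,Y]$ with $\uHom(\Gamma.X,\Gamma.Y)$, observe that $\mathsf{comp}$ and $c_{\Gamma.B,\Gamma.A,\Gamma.B}$ agree by the definition of the counit, compare the $\mathtt{Id}$-context path object with $\uHom(\Gamma.B,P_{\Gamma.B})$ (the paper does this by an explicit lift of the anodyne map against $({\partial_0}_*,{\partial_1}_*)$, made a homotopy equivalence via Lemma \ref{lemma: different characterisations of function extensionality} and 2-out-of-3, which is exactly your ``uniqueness of path objects'' step), and then transport the equivalence through the pullbacks defining $\mathsf{RInv}$, $\mathsf{LInv}$ and their fibre product. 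The only cosmetic difference is that the paper phrases the final step via exactness of the pullback functor between fibrant slices rather than invariance of homotopy pullbacks, and handles $\mathsf{LInv}$ by additionally noting that $\mathsf{exch}$ is an isomorphism.
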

\begin{proof}
Suppose that $\myuline{\mathsf{Eq}}(\Gamma.A,\Gamma.B)$ was obtained from fibered path objects $P_{\Gamma.A}$ and $P_{\Gamma.B}$ in $\cC/\!\!/\Gamma$. The identity context $\Gamma.[B,B].p_{[B,B]}^*[B,B].Id_{\Gamma.[B,B]}$ is a fibered path object for $\Gamma.[B,B] \cong \uHom_{\cC/\!\!/\Gamma}(\Gamma.B,\Gamma.B)$. By Lemma \ref{lemma: different characterisations of function extensionality}, the map $\Gamma.[B,B] \to \uHom(\Gamma.B,P_{\Gamma.B})$ is a homotopy equivalence. Thus, we obtain lift in
\[\begin{tikzcd}
	{\Gamma.[B,B]} & {\uHom(\Gamma.B,P_{\Gamma.B})} \\
	{\Gamma.[B,B].p_{[B,B]}^*[B,B].Id_{\Gamma.[B,B]}} & {\Gamma.[B,B].p_{[B,B]}^*[B,B] = \Gamma.[B,B]\times_{\Gamma} \Gamma.[B,B]}
	\arrow["\sim", tail, from=1-1, to=1-2]
	\arrow["\sim"', tail, from=1-1, to=2-1]
	\arrow["{({\partial_0}_*,{\partial_1}_*)}", two heads, from=1-2, to=2-2]
	\arrow["\gamma"{description}, dashed, from=2-1, to=1-2]
	\arrow[two heads, from=2-1, to=2-2]
\end{tikzcd}\]
which is a homotopy equivalence by 2-out-of-3. By definition of the counit, the maps $\mathsf{comp}$ and $c_{\Gamma.B,\Gamma.A,\Gamma.B}$ coincide. Therefore, by the definition of $\Gamma.[A,B].\mathsf{RInv}_{A,B}$ and $\myuline{\mathsf{RInv}}(\Gamma.A,\Gamma.B)$ we have pullbacks
\[\begin{tikzcd}
	{\Gamma.[A,B].\mathsf{RInv}_{A,B}} & {\Gamma.[B,B].p_{[B,B]}^*[B,B].\mathsf{Id}_{\Gamma.[B,B]}} \\
	{\myuline{\mathsf{RInv}}(\Gamma.A,\Gamma.B)} & {\uHom(\Gamma.B,P_{\Gamma.B})} \\
	{\Gamma.[A,B].p_{[A,B]}^*[B,A]} & {\Gamma.[B,B].p_{[B,B]}^*[B,B]} \\
	& {}
	\arrow[from=1-1, to=1-2]
	\arrow["\sim"', from=1-1, to=2-1]
	\arrow["\lrcorner"{anchor=center, pos=0.125}, draw=none, from=1-1, to=4-2]
	\arrow["{\gamma \sim}", from=1-2, to=2-2]
	\arrow[from=2-1, to=2-2]
	\arrow[two heads, from=2-1, to=3-1]
	\arrow["\lrcorner"{anchor=center, pos=0.125}, draw=none, from=2-1, to=4-2]
	\arrow[two heads, from=2-2, to=3-2]
	\arrow["{(c_{\Gamma.B,\Gamma.A,\Gamma.B},const_{\Gamma.B})}"', from=3-1, to=3-2]
\end{tikzcd}\]

\vspace{-0.5cm}

\noindent where the left upper vertical map is a homotopy equivalence by the exactness of the pullback functor between fibrant slices. Similarly, using in addition the fact that $\mathsf{exch}_{[A,B],[B,A]}$ is an isomorphism, we obtain a homotopy equivalence $\Gamma.[A,B].\mathsf{LInv}_{A,B} \to \myuline{\mathsf{LInv}}(A,B)$. These induce an equivalence $\Gamma.[A,B].\mathsf{isEquiv_{A,B}} \simeq \myuline{\mathsf{Eq}}(\Gamma.A,\Gamma.B)$ over $[A,B]$. Using the fact that the $\Sigma$-structure can be given by composing dependent projections, we get an equivalence $\Gamma.\mathsf{Equiv}(A,B) \simeq \myuline{\mathsf{Eq}}(\Gamma.A,\Gamma.B)$ over $\Gamma$.
\end{proof}

\begin{proposition}
\label{prop: univalent universe is univalent fibration}
If $U.\mathsf{El}$ is a univalent universe in $\cC$, then $p_{El} \colon  U.\mathsf{El} \to U$ is a univalent fibration.
\end{proposition}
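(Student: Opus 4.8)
The goal is to verify Definition \ref{def:univalent fibration-categorical def} for $p_{El}$, namely that the canonical map $\delta_{U.\mathsf{El}} \colon U \to \myuline{\mathsf{Eq}}_U(U.\mathsf{El})$ sitting over the diagonal $(id,id)\colon U \to U \times U$ is a homotopy equivalence in $\cC$. First I would set $\Gamma := U.p_U^*U = U \times U$ and, writing $A := p_{p_U^*U}^*\mathsf{El} = \pi_1^*\mathsf{El}$ and $B := q_{p_U,U}^*\mathsf{El} = \pi_2^*\mathsf{El}$, observe that the target $\myuline{\mathsf{Eq}}_U(U.\mathsf{El}) = \myuline{\mathsf{Eq}}_{U\times U}(\pi_1^*p_{El},\pi_2^*p_{El})$ is exactly the object of equivalences $\myuline{\mathsf{Eq}}(\Gamma.A,\Gamma.B)$ computed in the fibrant slice $\cC/\!\!/\Gamma$. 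Lemma \ref{lemma: equivalence context is object of equivalences} then supplies a homotopy equivalence $\Gamma.\mathsf{Equiv}(A,B) \simeq \myuline{\mathsf{Eq}}_U(U.\mathsf{El})$ over $\Gamma$, under which the contextual structure maps $\mathsf{id}_{\mathsf{El}}^{\mathsf{Equiv}}$ and $\delta_{U.\mathsf{El}}$ correspond to one another; I would carry out the whole argument along this identification.

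The central step is to translate the univalence witness $\mathsf{uvt}_{U,\mathsf{El}}$ of Definition \ref{def: univalent universe in contextual category} into a statement about homotopy equivalences. Composing the defining section of $\mathsf{idToEquiv}_{U,\mathsf{El}}$ with the projection yields a map $\mathsf{idToEquiv}\colon P_U \to \Gamma.\mathsf{Equiv}(A,B)$ over $\Gamma$, where $P_U := U.p_U^*U.\mathsf{Id}_U$ is the identity-type path object of $U$. The section $\mathsf{uvt}$ provides a section of $\lambda(\mathsf{idToEquiv})^*\mathsf{isEquiv}$, i.e.\ it exhibits the transpose $\lambda(\mathsf{idToEquiv})$ as factoring through the object of equivalences of the $\pi$-tribe $\cC/\!\!/\Gamma$. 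By Lemma \ref{lemma: transpose of morphism into object of equivalences is equivalence}, applied inside $\cC/\!\!/\Gamma$, this forces $\mathsf{idToEquiv}$ to be a homotopy equivalence over $\Gamma$, and hence (since fibrewise homotopy equivalences are homotopy equivalences in the total tribe) a homotopy equivalence in $\cC$.

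Finally I would compare $\delta_{U.\mathsf{El}}$ with $\mathsf{idToEquiv}$. The reflexivity section $\mathsf{refl}_U \colon U \to P_U$ is the anodyne leg of the path-object factorisation $U \to P_U \to U \times U$ furnished by the $\mathtt{Id}$-structure, hence a homotopy equivalence in $\cC$. By the computation rule of $J$, the map $\mathsf{idToEquiv}$ sends $\mathsf{refl}$ to the canonical identity equivalence $\mathsf{id}_{\mathsf{El}}^{\mathsf{Equiv}}$, which under the equivalence of Lemma \ref{lemma: equivalence context is object of equivalences} is precisely $\delta_{U.\mathsf{El}}$; thus $\delta_{U.\mathsf{El}} \simeq \mathsf{idToEquiv}\circ\mathsf{refl}_U$ over the diagonal. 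Since both $\mathsf{refl}_U$ and $\mathsf{idToEquiv}$ are homotopy equivalences, two-out-of-three yields that $\delta_{U.\mathsf{El}}$ is a homotopy equivalence, which is exactly univalence of $p_{El}$.

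I expect the main obstacle to be the second step: matching the purely syntactic data packaged by $\mathsf{uvt}$ and $\mathsf{isUvt}$ (phrased via the $\Pi$-structure, $\lambda$-transposition and the $\mathsf{isEquiv}$-context of Definition \ref{def: linv in contextual category}) with the tribe-theoretic object of equivalences of Construction \ref{constr: object of equiv}, so that ``$\mathsf{isEquiv}(\mathsf{idToEquiv})$ is inhabited'' becomes literally ``$\mathsf{idToEquiv}$ is a homotopy equivalence.'' Tracking the $J$-computation rule through the equivalence of Lemma \ref{lemma: equivalence context is object of equivalences} to identify $\delta_{U.\mathsf{El}}$ with $\mathsf{idToEquiv}\circ\mathsf{refl}_U$ is additional bookkeeping, but it is routine once the identifications above are set up.
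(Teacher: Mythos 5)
Your proposal is correct and follows essentially the same route as the paper: both identify $\Gamma.\mathsf{Equiv}(A,B)$ with the tribe-theoretic object of equivalences via Lemma \ref{lemma: equivalence context is object of equivalences}, use the $\mathsf{uvt}$ section to show that the map from the identity-type path object of $U$ to the $\mathsf{Equiv}$-context (your $\mathsf{idToEquiv}$, the paper's $w_U$) is a homotopy equivalence, and conclude by 2-out-of-3 using the anodyne reflexivity map. The only cosmetic difference is that the paper unwinds the $\mathsf{uvt}$ witness directly into two explicit homotopy-inverse maps $f,g$ rather than routing through Lemma \ref{lemma: transpose of morphism into object of equivalences is equivalence} in the fibrant slice, which is exactly the bookkeeping step you flagged.
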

\begin{proof}

Suppose $U.\mathsf{El}$ is a univalent universe in $\cC$. Then, there is a map $\mathsf{uvt}_{U,\mathsf{El}} \colon U.p_U^*U \to \mathsf{isUvt}(U,\mathsf{El})$. Let
$$q := q_{p_{\mathsf{Id}_U}, \mathsf{Equiv(p_{p_U^*U}^*\mathsf{El}, q_{p_U,U}^*\mathsf{El})}}$$
be the pullback projection, such that we have a map $w_U$ defined as the composite
$$q\mathsf{idToEquiv} \colon  U.p_U^*U.\mathsf{Id}_U \to U.p_u^*U.\mathsf{Equiv}(p_{p_U^*U}^*\mathsf{El}, q_{p_U,U}^*\mathsf{El}).$$
Then, $\mathsf{uvt}_{U,\mathsf{El}}$ induces by definition of $\mathsf{isEquiv}$ and the $\Pi$-structure two maps
$$f,g \colon U.p_U^*U.\mathsf{Equiv}(p_{p_U^*U}^*\mathsf{El}, q_{p_U,U}^*\mathsf{El}) \to U.p_U^*U.\mathsf{Id}_U$$ 
such that the maps $(w_U f,id)$ and $(g w_U,id)$ factor each through some identity context, which we may assume is a path object. Thus, $w_U$ is a homotopy equivalence. By Lemma \ref{lemma: equivalence context is object of equivalences}, we have an equivalence $U.p_U^*U.\mathsf{Equiv}(p_{p_U^*U}^*\mathsf{El}, q_{p_U,U}^*\mathsf{El})  \simeq \myuline{\mathsf{Eq}}_U(U.\mathsf{El})  $ over $U.p_U^*U$. Then $w_U$ fits into the following commutative diagram
\[\begin{tikzcd}
	U & {U.p_U^*U.\mathsf{Equiv}(p_{p_U^*U}^*El, q_{p_U,U}^*El)} & {\myuline{\mathsf{Eq}}_U(U.\mathsf{El})} \\
	{U.p_U^*U.\mathsf{Id}_U} && {U.p_U^*U = U \times U}
	\arrow["{\mathsf{id}^{\delta}}"', from=1-1, to=1-2]
	\arrow["{\delta_{U.El}}", curve={height=-24pt}, from=1-1, to=1-3]
	\arrow["\sim"', tail, from=1-1, to=2-1]
	\arrow["\sim", from=1-2, to=1-3]
	\arrow[two heads, from=1-3, to=2-3]
	\arrow["{\sim w_U}"{description}, from=2-1, to=1-2]
	\arrow[two heads, from=2-1, to=2-3]
\end{tikzcd}\]
and, by 2-out-of-3, $\mathsf{id}^{\delta}$ is a homotopy equivalence, hence so is $\delta_{U.\mathsf{El}}$ by 2-out-of-3.  Thus, $p_{\mathsf{El}} \colon U.\mathsf{El} \to U$ is a univalent fibration.
\end{proof}

\begin{corollary}
\label{cor: categorical model has enough univalent universes}
    If a categorical model $\cC$ with $\Sigma$, $\Pi$ and $\mathtt{Id}$-structure has enough univalent universes, then $\cC$ has enough univalent fibrations.
\end{corollary}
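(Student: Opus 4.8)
The plan is to witness Definition~\ref{def: enough univalent fibs tribe} directly: for a given fibration I will produce a univalent fibration of the form $p_{\mathsf{El}}\colon U.\mathsf{El}\to U$ that classifies it, and whose induced $p_{\mathsf{El}}$-bounded local class $S_{p_{\mathsf{El}}}$ is closed in the sense of Definition~\ref{def: local class tribe}. Both requirements will be read off from the hypotheses, the univalence via Proposition~\ref{prop: univalent universe is univalent fibration} and the closure via the closure of $U$ under the type constructors modelled by $\cC$.

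First I would put an arbitrary fibration into the shape demanded by the definition of enough univalent universes. Since $\cC$ is a categorical model with $\Sigma$-structure, every fibration $f\colon A\to X$ is, up to isomorphism, a composite of dependent projections, and such a composite is isomorphic via the $\Sigma$-structure to a single dependent projection $p_B\colon X.B\to X$ attached to a context extension $X.B$. I would then apply the assumption that $\cC$ has enough univalent universes to the one-element list $\{X.B\}$ to obtain a univalent universe $U.\mathsf{El}$, closed under all type constructors of $\cC$, together with a classifying map $\chi_{p_B}\colon X\to U$ exhibiting $f\cong p_B=\chi_{p_B}^*p_{\mathsf{El}}$ as a canonical pullback. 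In particular $f\in S_{p_{\mathsf{El}}}$, and by Proposition~\ref{prop: univalent universe is univalent fibration} the fibration $p_{\mathsf{El}}$ is univalent, so only the closedness of $S_{p_{\mathsf{El}}}$ remains.

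The remaining work is to translate each closure clause for $U$ into the corresponding clause for $S_{p_{\mathsf{El}}}$. For closure under composition, two composable fibrations in $S_{p_{\mathsf{El}}}$ correspond over the common base to context extensions by types $B$ and $C$, and their composite is the dependent projection of the $\Sigma$-type $\Sigma(B,C)$, whose code lies in $U$ because $U$ is closed under $\Sigma$; hence the composite is again a pullback of $p_{\mathsf{El}}$. For closure under dependent products, an object $f\in S_{p_{\mathsf{El}}}$ over the total space of a fibration $p$ that is itself classified by $U$ gives, via the partial adjoint $\Pi_p$, the projection of a $\Pi$-type $\Pi(B,C)$, again coded in $U$ by closure under $\Pi$, so $\Pi_p(f)\in S_{p_{\mathsf{El}}}$. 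Assembling these, $S_{p_{\mathsf{El}}}$ is closed and contains $f$, so $p_{\mathsf{El}}$ witnesses enough univalent fibrations.

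I expect the principal difficulty to be bookkeeping rather than conceptual: one must match the semantic operations (composites of dependent projections and the partial right adjoint $\Pi_p$) against the syntactic universe-closure axioms of \cite[Appendix~B.2.1]{kapulkin_simplicial_2021}, and check that the canonical codes produced by $\Sigma$- and $\Pi$-closure are pullback-stable so that the resulting maps are genuinely pullbacks of $p_{\mathsf{El}}$, not merely homotopy equivalent to such. The one subtle point worth isolating is the range of the fibration $p$ in the dependent-product clause: the universe's $\Pi$-closure delivers $\Pi_p(f)\in S_{p_{\mathsf{El}}}$ exactly when $p$ is itself $U$-small, which is the sense in which that clause of Definition~\ref{def: local class tribe} must be read for the argument to close.
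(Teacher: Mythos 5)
Your proposal is correct and follows essentially the same route as the paper's proof: classify the given fibration (reduced to a dependent projection) by a universe obtained from the enough-univalent-universes hypothesis, invoke Proposition~\ref{prop: univalent universe is univalent fibration} for univalence, and derive closure of $S_{p_{\mathsf{El}}}$ under composition and $\Pi_p$ from the universe's closure under $\Sigma$- and $\Pi$-structure. Your closing remark about how the $\Pi_p$-clause of Definition~\ref{def: local class tribe} must be read is a fair observation about a point the paper's own (terser) proof also passes over without comment.
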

\begin{proof}
    Since any fibration is isomorphic to a dependent projection, we can find for any fibration $f$ a univalent fibration $p_{\mathsf{El}} \colon U.\mathsf{El} \to U$ of which $f$ is a pullback. It remains to show that $S_{p_{\mathsf{El}}}$ is closed.
    
    Since the universe $U.\mathsf{El}$ is by assumption closed under the $\Sigma$-structures and any composite of two fibrations is isomorphic to a $\Sigma$-context, we see that $S_{p_{\mathsf{El}}}$ is closed under composition. The closure under $\Pi_p$ for any fibration $p \in S_{p_{\mathsf{El}}}$ follows directly from the closure of $U.\mathsf{El}$ under the $\Pi$-structure.
\end{proof}

\begin{lemma}
    \label{lemma: resizing in model gives almost homotopy unique pullbacks}
    If a categorical model $\cC$ with $\Sigma$, $\Pi$ and $\mathtt{Id}$-structures and enough univalent universes satisfies propositional resizing, then for any univalent  $p_{\mathsf{El}_n} \colon U_{n}.\mathsf{El}_n \to U_n$ with corresponding homotopy subobject classifier $\top \colon \mathsf{El}(\Omega_n) \to \Omega$ there is a homotopy commutative square
    \[\begin{tikzcd}
    	{\mathsf{El}(\Omega_n)} & {\mathsf{El}(\Omega_0)} \\
    	{\Omega_n} & {\Omega_0}
    	\arrow["\sim", from=1-1, to=1-2]
    	\arrow[two heads, from=1-1, to=2-1]
    	\arrow[two heads, from=1-2, to=2-2]
    	\arrow["\sim"', from=2-1, to=2-2]
    \end{tikzcd}\]
    in which the top and the bottom morphisms are homotopy equivalences.
\end{lemma}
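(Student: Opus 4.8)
The plan is to reduce the statement to propositional resizing (Definition~\ref{def: propositional resizing in contextual category}) by first identifying, for each level $k$, the subobject classifier $\Omega_k$ produced by Construction~\ref{constr: homotopy subobject classifier from univalent fibration} from $p_{\mathsf{El}_k}$ with the type-theoretic object $\mathsf{Prop}_{U_k}$ of Definition~\ref{def: Prop in contextual category}, and then transporting the resizing equivalences along this identification. Since the whole conclusion is stated up to homotopy equivalence, I am free to make convenient choices of path objects throughout.

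First I would fix the path object entering Construction~\ref{constr: homotopy subobject classifier from univalent fibration}. As $p_{\mathsf{El}_k}$ is a dependent projection, the $\mathtt{Id}$-structure supplies a canonical fibered path object $U_k.\mathsf{El}_k.p^*\mathsf{El}_k.\mathtt{Id}_{\mathsf{El}_k}\to U_k.\mathsf{El}_k\times_{U_k}U_k.\mathsf{El}_k$ for $p_{\mathsf{El}_k}$ in $\cC/\!\!/U_k$. Because $\Omega_p$ is obtained by applying $\Pi_{p\times p}$ to a chosen path object and $\Pi_{p\times p}$ preserves homotopy equivalences (Definition~\ref{def_dep_prod}), any two choices give homotopy equivalent $\Omega_p$, exactly as in Lemma~\ref{lemma: object of equivalences is independent of choice of of path object}; so I may use this $\mathtt{Id}$-path object. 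With this choice, $\Pi_{p_{\mathsf{El}_k}\times p_{\mathsf{El}_k}}$ applied to the $\mathtt{Id}$-context is, up to the canonical comparison between the tribe's partial right adjoint $\Pi_{p\times p}$ and the strict $\Pi$-structure of $\cC$ (the same comparison underlying Lemma~\ref{lemma: equivalence context is object of equivalences}), precisely the double dependent product $\prod_{x,y:\mathsf{El}_k}\mathtt{Id}(x,y)$ over $U_k$ defining $U_k.\mathsf{isProp}$. Hence there is a homotopy equivalence $\Omega_k\simeq\mathsf{Prop}_{U_k}$ over $U_k$ carrying $Pr_{p_{\mathsf{El}_k}}$ to the projection $\mathsf{Prop}_{U_k}\to U_k$, and correspondingly $\mathsf{El}(\Omega_k)$, being the pullback of $p_{\mathsf{El}_k}$ along $Pr_{p_{\mathsf{El}_k}}$, to the restriction $\mathsf{Prop}_{U_k}\times_{U_k}U_k.\mathsf{El}_k$ of the universal family to the propositions.

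Next, for each $k$ the inclusion $\chi_{p_{\mathsf{El}_k}}\colon U_k\to U_{k+1}$ satisfies $p_{\mathsf{El}_k}=\chi_{p_{\mathsf{El}_k}}^*p_{\mathsf{El}_{k+1}}$, so the resizing map $i_k\colon U_k.\mathsf{isProp}\to U_{k+1}.\mathsf{isProp}$ lies over $\chi_{p_{\mathsf{El}_k}}$ and, by Definition~\ref{def: propositional resizing in contextual category}, is a homotopy equivalence. Transporting along the identifications of the previous paragraph yields homotopy equivalences $\Omega_k\simeq\Omega_{k+1}$, which I compose over $0\le k<n$ and invert up to homotopy to obtain the bottom equivalence $\Omega_n\simeq\Omega_0$. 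For the top map, note that since $i_k$ lies over $\chi_{p_{\mathsf{El}_k}}$ and $\chi_{p_{\mathsf{El}_k}}^*\mathsf{El}_{k+1}=\mathsf{El}_k$, the induced map of universal families sits in a square
\[\begin{tikzcd}
	{\mathsf{El}(\Omega_k)} & {\mathsf{El}(\Omega_{k+1})} \\
	{\Omega_k} & {\Omega_{k+1}}
	\arrow[from=1-1, to=1-2]
	\arrow["\top"', two heads, from=1-1, to=2-1]
	\arrow["\top", two heads, from=1-2, to=2-2]
	\arrow["\sim"', from=2-1, to=2-2]
\end{tikzcd}\]
which is a pullback along the fibration $\top$, hence a homotopy pullback; as the bottom map is a homotopy equivalence, so is the top map by invariance of homotopy pullbacks, and the square commutes. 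Composing these squares over $0\le k<n$ and inverting the horizontal maps produces the desired homotopy commutative square from level $n$ to level $0$ with both horizontal maps homotopy equivalences.

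The main obstacle I anticipate is the identification $\Omega_k\simeq\mathsf{Prop}_{U_k}$ in the first step: it requires matching the tribe-level construction $\Pi_{p\times p}(P_p)$, which is only defined up to homotopy and depends on the weak partial adjoint $\Pi_{p\times p}$, with the on-the-nose $\Pi$- and $\mathtt{Id}$-structure of the contextual category appearing in Definition~\ref{def: Prop in contextual category}. This is the $\mathsf{isProp}$-analogue of Lemma~\ref{lemma: equivalence context is object of equivalences}, and carrying it out cleanly needs the same care with the canonical comparison maps and with independence from the choice of path object; everything after it is a formal consequence of resizing together with the right-properness of tribes used above.
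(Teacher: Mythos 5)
Your proposal is correct and follows essentially the same route as the paper: unwind propositional resizing to get homotopy equivalences $\Omega_k \to \Omega_{k+1}$, lift them to the total spaces by pullback pasting and stability of homotopy equivalences under pullback along fibrations, then compose and invert. The only difference is that you make explicit (and rightly flag as the delicate point) the identification of the tribe-level $\Omega_k = \Pi_{p\times p}(P_p)$ with the contextual-category $U_k.\mathsf{isProp}$ via the $\mathtt{Id}$-path object and independence of the choice of path object, a step the paper compresses into the phrase ``unwinding the definition of propositional resizing in $\cC$.''
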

\begin{proof}
    Unwinding the definition of propositional resizing in $\cC$, all maps $\Omega_{n} \to \Omega_{n+1}$ are homotopy equivalences, thus by stability of homotopy equivalences under pullback along fibrations and pullback pasting we get a pullback square
    \[\begin{tikzcd}
    	{\mathsf{El}(\Omega_n)} & {\mathsf{El}(\Omega_{n+1})} \\
    	{\Omega_{n}} & {\Omega_{n+1}}
    	\arrow["\sim", from=1-1, to=1-2]
    	\arrow[two heads, from=1-1, to=2-1]
    	\arrow["\lrcorner"{anchor=center, pos=0.125}, draw=none, from=1-1, to=2-2]
    	\arrow[two heads, from=1-2, to=2-2]
    	\arrow["\sim"', from=2-1, to=2-2]
    \end{tikzcd}\]
    in which the top and bottom horizontal maps are homotopy equivalences. Choosing homotopy inverses to both of them and composing the resulting homotopy commutative squares gives the result.
\end{proof}

\subsection{Internal Homotopy Initial Objects and Pushouts}
\begin{definition}\label{def:internal homotopy initial object}\normalfont
An \textit{internal homotopy initial object} in a $\pi$-tribe $\cC$ is an object $I$ in it such that $\uHom(I,X)$ is contractible for every $X\in\cC$. 
\end{definition}

\begin{construction}\label{constr:Cocone object simp}\normalfont
Given a span $A\xleftarrow{f}C\xrightarrow{g}B$ of morphisms in a $\pi$-tribe $\cC$ and another object $D$, define $\Cocone(f,g,D)$ via the pullback square
\[\begin{tikzcd}
	{\Cocone(f,g,D)} & {\uHom(C,P_D)} \\
	{\uHom(A,D) \times \uHom(B,D)} & {\uHom(C,D) \times \uHom(C,D)} \\
	& {}
	\arrow[from=1-1, to=1-2]
	\arrow[two heads, from=1-1, to=2-1]
	\arrow["\lrcorner"{anchor=center, pos=0.125}, draw=none, from=1-1, to=3-2]
	\arrow["{({\partial_0}_*,{\partial_1}_*)}", two heads, from=1-2, to=2-2]
	\arrow["{f^*\times g^*}"', from=2-1, to=2-2]
\end{tikzcd}\]
Given another object $E$, define a morphism $ap\colon\uHom(D,E)\to\uHom(P_D,P_E)$ as the adjoint transpose to the dashed lift in the diagram
\[\begin{tikzcd}
	{\uHom(D,E)\times D} & E & {P_E} \\
	{\uHom(D,E)\times P_D} & {(\uHom(D,E)\times D)\times (\uHom(D,E)\times D)} & {E\times E}
	\arrow["{\mathrm{ev}_{D,E}}", from=1-1, to=1-2]
	\arrow["\sim"', tail, from=1-1, to=2-1]
	\arrow["\sim", tail, from=1-2, to=1-3]
	\arrow[two heads, from=1-3, to=2-3]
	\arrow[dashed, from=2-1, to=1-3]
	\arrow[from=2-1, to=2-2]
	\arrow["{\mathrm{ev}_{D,E}\times \mathrm{ev}_{D,E}}"', from=2-2, to=2-3]
\end{tikzcd}\]
which exists since the left-hand morphism is anodyne as a product of anodyne maps. If $1\to\Cocone(f,g,Q)$ is a global section, consisting of the data of $i_A\colon A\to Q$, $i_B\colon B\to Q$ and $H\colon C\to P_Q$, then let $\uHom(Q,D) \to \Cocone(f,g,D)$ be the induced arrow in the commutative diagram
\[\begin{tikzcd}
	{\uHom(Q,D)} & {\uHom(P_Q,P_D)} & \\
	& {\Cocone(f,g,D)} & {\uHom(C,P_D)} \\
	& {\uHom(A,D) \times \uHom(B,D)} & {\uHom(C,D) \times \uHom(C,D)} \\
	&& {}
	\arrow["ap", from=1-1, to=1-2]
	\arrow[dashed, from=1-1, to=2-2]
	\arrow["{(i_B^*,i_C^*)}"', curve={height=18pt}, from=1-1, to=3-2]
	\arrow["{H^*}", curve={height=-6pt}, from=1-2, to=2-3]
	\arrow[from=2-2, to=2-3]
	\arrow[two heads, from=2-2, to=3-2]
	\arrow["\lrcorner"{anchor=center, pos=0.125}, draw=none, from=2-2, to=4-3]
	\arrow["{({\partial_0}_*,{\partial_1}_*)}", two heads, from=2-3, to=3-3]
	\arrow["{ f^* \times g^*}"', from=3-2, to=3-3]
\end{tikzcd}\]
\end{construction}

\begin{definition}\label{def:internal homotopy pushout new}\normalfont
Given a span $A\xleftarrow{f}C\xrightarrow{g}B$ in a $\pi$-tribe $\cC$, an \textit{internal homotopy pushout} of this data is an object $Q\in\cC$ together with a global section $(i_A,i_B,H)\colon 1\to\Cocone(f,g,Q)$ such that for every object $D\in\cC$ the map
$$\uHom(Q,D) \to\Cocone(f,g,D)$$
is a homotopy equivalence.
\end{definition}

\begin{lemma}
\label{lemma: categorical model has all internal homotopy pushouts}
    If $\cC$ is a categorical model of dependent type theory with 0-type structures (\cite[B.1.5.]{kapulkin_simplicial_2021}) and all homotopy pushouts, then it is in particular a $\pi$-tribe with an internal homotopy initial object and all internal homotopy pushouts.
\end{lemma}
\begin{proof}
    For homotopy initial objects, one considers the object $1.0$ given by the 0-type structure for the terminal object $1$. Concerning homotopy pushouts, by solving a lifting problem analogous to the one in the proof in \ref{lemma: equivalence context is object of equivalences} we see that the relevant map in question is a homotopy equivalence.
\end{proof}

\section{Univalence in $\infty$-Categories}
\label{sect: univalence in infinity-categories}

We will now define the notion of a univalent morphism in an $\infty$-category and prove that every univalent fibration in a $\pi$-tribe becomes a univalent morphism in the $\infty$-localisation. 

\subsection{The Right Fibration and the Object of Equivalences}

 Our goal in this section is to construct an object of equivalences $\myuline{\mathpzc{BiInv}}(a,b)$ in an arbitrary cartesian closed and finitely complete $\infty$-category. We show that it induces a representation of the right fibration of equivalences $Eq(a,b) \to \cCi$ that will be defined below, and in terms of which we will define univalence in an $\infty$-category. Hence, we show that  $Eq(a,b) \to \cCi$ is always representable in a cartesian closed and finitely complete $\infty$-category.

For any $\infty$-category $\cCi$ with objects $a$ and $b$ there is a right fibration $Map(a,b) \to \cCi$ defined via the pullback

    \[\begin{tikzcd}
    	{Map(a,b)} & {\cCi/b} \\
    	\cCi & \cCi
    	\arrow[from=1-1, to=1-2]
    	\arrow[from=1-1, to=2-1]
    	\arrow["\lrcorner"{anchor=center, pos=0.125}, draw=none, from=1-1, to=2-2]
    	\arrow[from=1-2, to=2-2]
    	\arrow["{- \times a}"', from=2-1, to=2-2]
    \end{tikzcd}\]
    of the representable right fibration $\cCi/b \to \cCi$ along the product functor $-\times a$.  Its fiber over an object $x$ is the space $\mathpzc{Map}_{\cCi}(x \times a, b) \simeq \mathpzc{Map}_{\cCi/x}(x \times a, x \times b)$. The right fibration $Map(a,b) \to \cCi$ is representable if and only if $\cCi$ is cartesian closed by \cite[Prop. 2.1]{gepner_univalence_2017} with the representing object being $\myuline{\mathpzc{Map}}(a,b)$.

\begin{definition}[{\autocite[§2.6]{gepner_univalence_2017}}]

\label{def: fibration of equivalences in infty cat}
    Let $\cCi$ be an $\infty$-category and $a$ and $b$ be objects in $\cCi$. Define
    $$\mathpzc{Eq}_{\cCi}(a,b):=\mathpzc{Map}_{\cCi^{\mathsf{gpd}}}(a,b)$$
    to be the space of maps between $a$ and $b$ in the maximal subgroupoid ${\cCi}^{\mathsf{gpd}}$ of $\cCi$. This space is a subobject $\mathpzc{Eq}_{\cCi}(a,b)\subseteq \mathpzc{Map}_{\cCi}(a,b)$ of the mapping space. It is equipped with a right fibration $Eq(a,b) \to \cCi$ whose fiber over $x \in \cCi$ is the $\infty$-groupoid $\mathpzc{Eq}_{\cCi/x}(x \times a, x \times b) \subseteq \mathpzc{Map}_{\cCi/x}(x \times a, x \times b)$. This is a subfibration of $Map(a,b) \to \cCi$ and the map $\mathpzc{Eq}_{\cCi/x}(x \times a, x \times b) \to \mathpzc{Map}_{\cCi/x}(x \times a, x \times b)$ is (-1)-truncated.
\end{definition}

Similarly, we have as a local version for any objects $p_1 \colon x \to b$ and $p_2 \colon y \to b$ in $\cCi/b$ a right fibration $Eq_{\cCi/b}(p_1,p_2) \to \cCi/b$ whose fiber over $f \colon a \to b$ is the $\infty$-groupoid $\mathpzc{Eq}_{\cCi/b}(f^*p_1, f^*p_2) \subseteq \mathpzc{Map}_{\cCi/b}(f^*p_1, f^*p_2)$.

Next, we need to define the $\infty$-categorical analog of the map $\mathsf{comp}$ in a contextual category and the map $c_{A,B,C}$ in a $\pi$-tribe. Given a cartesian closed $\infty$-category $\cC$ with counit maps $\epsilon_{a,b} \colon  \myuline{\mathpzc{Map}}(a,b) \times a \to b$, we define for any objects $a,b$ and $c$ the composition map
$$c_{a,b,c, \infty} \colon  \myuline{\mathpzc{Map}}(b,c)   \times \myuline{\mathpzc{Map}}(a,b) \to \myuline{\mathpzc{Map}}(a,c) $$
to be the adjoint transpose of a choice of composition
$$\myuline{\mathpzc{Map}}(b,c) \times \myuline{\mathpzc{Map}}(a,b) \times a  \xrightarrow{ id_{\myuline{\mathpzc{Map}}(b,c)} \times \epsilon_{a,b}}   \myuline{\mathpzc{Map}}(b,c)  \times b \xrightarrow{\epsilon_{b,c}} c.$$
We claim that this composition map induces a well-defined composition law on mapping spaces defined as the dashed arrow
\[\begin{tikzcd}
	{\mathpzc{\mathpzc{Map}}(x, \myuline{\mathpzc{Map}}(b,c) \times \myuline{\mathpzc{Map}}(a,b) )} & {\mathpzc{\mathpzc{Map}}_{\cCi/x}(x \times b, x \times c) \times \mathpzc{\mathpzc{Map}}_{\cCi/x}(x \times a, x \times b)} \\
	{\mathpzc{\mathpzc{Map}}(x, \myuline{\mathpzc{Map}}(a,c))} & {\mathpzc{\mathpzc{Map}}_{\cCi/x}(x \times a, x \times c) }
	\arrow["{(c_{a,b,c, \infty})_{*}}"', from=1-1, to=2-1]
	\arrow["\simeq"', from=1-2, to=1-1]
	\arrow["{C_{a,b,c,\infty}}", dashed, from=1-2, to=2-2]
	\arrow["\simeq"', from=2-1, to=2-2]
\end{tikzcd}\]
To see that, we pass to the homotopy category $h\cCi$. Observe that products in $h\cCi$ can be computed in $\cCi$. Moreover, the adjunction giving cartesian closedness on $\cC$ induces an adjunction on $h\cCi$ such that $\mathsf{Hom}_{h\cCi}(x \times y, z) \cong \mathsf{Hom}_{h\cCi}(x, \myuline{\mathpzc{Map}}(y, z))$ with counit $[\epsilon]$. This follows from the fact that $\cCi \mapsto h\cCi$ is a strict functor from the homotopy 2-category of quasicategories $h_2\mathbf{QCat}$ to $\Cat$, hence it preserves adjunctions. So in particular $[c_{a,b,c,\infty}]$ is the transpose of the morphism $[\epsilon]_{b,c} ([\epsilon]_{a,b} \times [id_{\myuline{\mathpzc{Map}}(b,c)}])$. Thus, given $f \in \mathpzc{Map}_{\cCi/x}(x \times a, x \times b)$ and $g \in \mathpzc{Map}_{\cCi/x}(x \times b, x \times c)$,  we get by the ordinary 1-category computation of the transpose of $[c_{a,b,c,\infty}](\overline{[g]},\overline{[f]})$ that
$$[C_{a,b,c, \infty}(g,f)]  = \overline{[c_{a,b,c,\infty}](\overline{[g]},\overline{[f]})}= [g][f] $$
in $h\cCi$ as desired. A further calculation in the homotopy category shows that this law is associative and unital up to homotopy.

\begin{construction}[Cf. Constr. \ref{constr: object of equiv}]
\label{construction: object of equivalences infinity cat}
    Let $a$ and $b$ be objects in a cartesian closed and finitely complete $\infty$-category $\cCi$. Define $\myuline{\mathpzc{RInv}}(a,b)$ and $\myuline{\mathpzc{LInv}}(a,b)$ via pullbacks
    \[\begin{tikzcd}[every label/.append style={outer sep=0.2cm}]
    	{\myuline{\mathpzc{RInv}}(a,b)} & {\myuline{\mathpzc{Map}}(b,b)} \\
    	{\myuline{\mathpzc{Map}}(a,b) \times \myuline{\mathpzc{Map}}(b,a)} & {\myuline{\mathpzc{Map}}(b,b) \times \myuline{\mathpzc{Map}}(b,b)} \\
    	\\
    	& {}
    	\arrow[from=1-1, to=1-2]
    	\arrow[from=1-1, to=2-1]
    	\arrow[from=1-1, to=2-1]
    	\arrow["\lrcorner"{anchor=center, pos=0.125}, draw=none, from=1-1, to=4-2]
    	\arrow["{(id,id)}"{pos=0.7}, from=1-2, to=2-2]
    	\arrow["{(c_{b,a,b,\infty},const_b)}"', from=2-1, to=2-2]
    \end{tikzcd}\]

    \vspace{-1.5cm}

    \[\begin{tikzcd}[every label/.append style={outer sep=0.2cm}]
    	{\myuline{\mathpzc{LInv}}(a,b)} & {\myuline{\mathpzc{Map}}(a,a)} \\
    	{\myuline{\mathpzc{Map}}(b,a) \times \myuline{\mathpzc{Map}}(a,b)} & {\myuline{\mathpzc{Map}}(a,a) \times \myuline{\mathpzc{Map}}(a,a)} \\
    	\\
    	& {}
    	\arrow[from=1-1, to=1-2]
    	\arrow[from=1-1, to=2-1]
    	\arrow[from=1-1, to=2-1]
    	\arrow["\lrcorner"{anchor=center, pos=0.125}, draw=none, from=1-1, to=4-2]
    	\arrow["{(id,id)}"{pos=0.6}, from=1-2, to=2-2]
    	\arrow["{(c_{a,b,a,\infty},const_a)}"', from=2-1, to=2-2]
    \end{tikzcd}\]
    
    \vspace{-1.5cm}
    
    \noindent and define $\myuline{\mathpzc{BiInv}}(a,b)$ as the pullback
    \[\begin{tikzcd}
    	{\myuline{\mathpzc{BiInv}}(a,b)} & {\myuline{\mathpzc{LInv}}(a,b)} \\
    	{\myuline{\mathpzc{RInv}}(a,b)} & {\myuline{\mathpzc{Map}}(a,b)} \\
    	& {}
    	\arrow[from=1-1, to=1-2]
    	\arrow[from=1-1, to=2-1]
    	\arrow["\lrcorner"{anchor=center, pos=0.125}, draw=none, from=1-1, to=3-2]
    	\arrow[from=1-2, to=2-2]
    	\arrow[from=2-1, to=2-2]
    \end{tikzcd}\]

\end{construction}

    \vspace{-1.0cm}
    By adjointness and the fact that the mapping space functor $\mathpzc{Map}_{\cCi}(x,-) \colon \cCi \to \mathbf{Spc}$ preserves limits, we have a pullback in $\mathbf{Spc}$
    \[\begin{tikzcd}[every label/.append style={outer sep=0.2cm}]
	{\mathpzc{Map}(x, \myuline{\mathpzc{RInv}}(a,b))} & {\mathpzc{Map}_{\cCi/x}(x \times b, x \times b)} \\
	{\mathpzc{Map}_{\cCi/x}(x \times a, x \times b) \times \mathpzc{Map}_{\cCi/x}(x \times b, x \times a)} & {\mathpzc{Map}_{\cCi/x}(x \times b, x \times b) \times \mathpzc{Map}_{\cCi/x}(x \times b, x \times b)} \\
	& {} \\
	& {}
	\arrow[from=1-1, to=1-2]
	\arrow[from=1-1, to=2-1]
	\arrow["\lrcorner"{anchor=center, pos=0.125}, draw=none, from=1-1, to=4-2]
	\arrow["{(id,id)}", from=1-2, to=2-2]
	\arrow["{(C_{b,a,b,\infty}, \mathpzc{const}_{x \times b})}"', from=2-1, to=2-2]
    \end{tikzcd}\]

    \vspace{-2cm}
    
    \noindent and we will denote this pullback as $\mathpzc{RInv}_{\cCi/x}(x \times a, x \times b)$. Observe that this space can be computed by taking the homotopy pullback in $\mathbf{sSet}$, assuming for example the Kan-Quillen model structure on simplicial sets. 
    
    To give an explicit construction of $\mathpzc{RInv}_{\cCi/x}(x \times a, x \times b)$, one can factor the diagonal of $\mathpzc{Map}_{\cCi/x}(x \times b, x \times b)$ through the path object $ \mathpzc{Map}_{\cCi/x}(x \times b, x \times b)^{\Delta^1}$ into the obvious trivial cofibration followed by the obvious Kan fibration. Then,  $\mathpzc{RInv}_{\cCi/x}(x \times a, x \times b)$ can be obtained by taking the strict pullback of that fibration along the map $(C_{b,a,b,\infty}, \mathpzc{const}_{x \times b})$. Thus, a point in this space consists of maps $f \colon x \times a \to x \times b$ and $g \colon x \times b \to x \times a$ over $x$ together with a homotopy $\Delta^1\to \mathpzc{Map}_{\cCi/x}(x \times b, x \times b)$ witnessing that $[C_{b,a,b,\infty}(f,g)] = [f][g] = [id_{x \times b}]$. In particular, the projection $\mathpzc{RInv}_{\cCi/x}(x \times a, x \times b) \to \mathpzc{Map}_{\cCi/x}(x \times a, x \times b)$ is then the composite of two Kan fibrations and therefore a Kan fibration.

    Dually, we define $\mathpzc{LInv}_{\cCi/x}(x \times a, x \times b)$ to be the space equivalent to the pullback $\mathpzc{Map}(x, \myuline{\mathpzc{LInv}}(a,b))$. Finally, we have a pullback
\[\begin{tikzcd}
	{\mathpzc{Map}(x, \myuline{\mathpzc{BiInv}}(a,b))} & {\mathpzc{LInv}_{\cCi/x}(x \times a, x \times b)} \\
	{\mathpzc{RInv}_{\cCi/x}(x \times a, x \times b)} & {\mathpzc{Map}_{\cCi/x}(x \times a, x \times b)} \\
	& {}
	\arrow[from=1-1, to=1-2]
	\arrow[from=1-1, to=2-1]
	\arrow["\lrcorner"{anchor=center, pos=0.125}, draw=none, from=1-1, to=3-2]
	\arrow[from=1-2, to=2-2]
	\arrow[from=2-1, to=2-2]
\end{tikzcd}\]

\vspace{-1cm}
    
    \noindent and we define $\mathpzc{BiInv}_{\cCi/x}(x \times a, x \times b)$ to be this space. This space can also be computed as a homotopy pullback. But since the maps involved are themselves Kan fibrations, we can just take the strict pullback. Thus, by construction, an object in $\mathpzc{BiInv}_{\cCi/x}(x \times a, x \times b)$ consists of a triple $(f,g,h)$ where $f \colon  x \times a \to x \times b$ and $g,h \colon   x \times b \to x \times a$ are maps over $x$ such that $[C_{a,b,a,\infty}(f,g)] = [f][g] = [id] $ and similarly $[C_{b,a,b,\infty}(h,f)] = [h][f] = [id]$. Hence, $f$ is an equivalence.

    Now, $\mathpzc{BiInv}_{\cCi/-}(- \times a, - \times b)$ is a functor $\cCi^{op} \to \mathbf{Spc}$. By the straightening-unstraightening equivalence this functor corresponds to a right fibration $BiInv(a,b) \to \cCi$. By construction as a limit of representable right fibrations, this right fibration is in fact representable, and it is represented by the object $\myuline{\mathpzc{BiInv}}(a,b)$.
    
    Our next goal is to show that this right fibration is equivalent to the right fibration of equivalences $Eq(a,b) \to \cC$ and hence to derive that the latter is representable as well.

    \begin{lemma}[Cf. Lem. \ref{lemma: precomposition with homotopy equivalence is homotopy equivalence}]
    \label{lemma: precomposition map in  infinity cat with eqivalence is equivalence}
            Let $f \colon x \times a \to x \times b$ be an equivalence over $x$ and $\overline{f}\colon x \to \myuline{\mathpzc{Map}}(a,b)$ be its adjoint transpose and $c$ be any object in $\cCi$. Then, the post-composition map
    $$x \times \myuline{\mathpzc{Map}}(c,a) \xrightarrow{(\pi_x, c_{c,a,b,\infty}(\overline{f} \times id_{\myuline{\mathpzc{Map}}(c,a)}))} x \times \myuline{\mathpzc{Map}}(c,b)$$
     denoted as $f_*$ is an equivalence.
    \end{lemma}

    \begin{proof}
        We may assume that $f$ has a two-sided inverse $g \colon  x \times b \to x \times a$ over $x$ which has as transpose a map $\overline{g} \colon  x \to \myuline{\mathpzc{Map}}(b,a)$. This allows us to define a precomposition map $g_*$ as the composite
    $$x \times \myuline{\mathpzc{Map}}(c,b) \xrightarrow{(\pi_x,  c_{c,b,a,\infty} ( \overline{g} \times id_{\myuline{\mathpzc{Map}}(c,b)}))} x \times \myuline{\mathpzc{Map}}(c,a).$$
    To show that any composition $g_*f_*$ is equivalent to the identity $id_{x \times \myuline{\mathpzc{Map}}(c,a)} = (\pi_x, \pi_{ \myuline{\mathpzc{Map}}(c,a)})$, it suffices to show that the second component of the composite $g_*f_*$
    $$x \times \myuline{\mathpzc{Map}}(c,a) \xrightarrow{(\pi_x,c_{c,b,a,\infty}(\overline{g}\pi_x, c_{c,a,b,\infty}  (\overline{f} \times id_{\myuline{\mathpzc{Map}}(c,a)} )))} x \times \myuline{\mathpzc{Map}}(c,a)$$
    is equivalent to the projection $ \pi_{ \myuline{\mathpzc{Map}}(c,a)} \colon  x \times \myuline{\mathpzc{Map}}(c,a) \to \myuline{\mathpzc{Map}}(c,a)$. For this it suffices to show that the transpose of this map is equivalent to the transpose of $\pi_{ \myuline{\mathpzc{Map}}(c,a)}$ which follows straighforwardly by a computation in the homotopy category $h\cCi$ analogous to the one in the proof of Lemma \ref{lemma: precomposition with homotopy equivalence is homotopy equivalence} using the definition of the composition maps $c_{-,-,-}$ and adjoint transposition. The case of $f_*g_*$ is analogous.
    \end{proof}

    The next proposition can be seen as the $\infty$-categorical version of the type theoretic proof of $\mathtt{isProp(isEquiv(f))}$.

    \begin{proposition}[Cf. Prop. \ref{prop: projection object of equivalences is homotopy monomorphism}]
    \label{proposition: projection from equivalence (biinv) to hom is monomorphism  infinity category}
            The projection map $\pi \colon  \myuline{\mathpzc{BiInv}}(a,b) \to \myuline{\mathpzc{Map}}(a,b)$ is a monomorphism.
    \end{proposition}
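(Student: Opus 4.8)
The plan is to verify the defining condition of a monomorphism from Lemma \ref{lemma: different characterisations of monomorphism in infty cat} by passing to mapping spaces. By that lemma it suffices to show that for every object $x$ and every morphism $\overline{f}\colon x \to \myuline{\mathpzc{Map}}(a,b)$ the mapping space $\mathpzc{Map}_{\cCi/\myuline{\mathpzc{Map}}(a,b)}(\overline{f},\pi)$ is empty or contractible. Since $\mathpzc{Map}_{\cCi}(x,-)$ preserves limits, this mapping space is the homotopy fibre over $\overline{f}$ of the map of spaces
$$\mathpzc{Map}(x,\myuline{\mathpzc{BiInv}}(a,b)) \longrightarrow \mathpzc{Map}(x,\myuline{\mathpzc{Map}}(a,b)),$$
which by the representations of Construction \ref{construction: object of equivalences infinity cat} is equivalent to the projection $\mathpzc{BiInv}(x\times a, x\times b) \to \mathpzc{Map}(x\times a, x\times b)$ sending a triple $(f,g,h)$ to $f$. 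So the whole problem reduces to computing the fibre of this projection over a point $f\colon x\times a\to x\times b$.

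First I would unwind the iterated pullbacks defining $\mathpzc{RInv}$, $\mathpzc{LInv}$ and $\mathpzc{BiInv}$ in Construction \ref{construction: object of equivalences infinity cat}. Since $\mathpzc{BiInv}$ is the pullback of $\mathpzc{LInv}$ and $\mathpzc{RInv}$ over $\mathpzc{Map}(x\times a, x\times b)$, its fibre over $f$ is the product $\mathpzc{LInv}_f\times\mathpzc{RInv}_f$ of the corresponding fibres. Reading off the explicit Kan-fibration models given in the construction, $\mathpzc{RInv}_f$ is the space of pairs consisting of a map $g$ together with a homotopy witnessing $[f][g]=[\mathrm{id}]$, i.e. the space of sections of $f$ in the slice $\cCi/x$, and $\mathpzc{LInv}_f$ is the space of pairs $(h,[h][f]=[\mathrm{id}])$, i.e. the space of retractions of $f$ in $\cCi/x$.

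Then I split into two cases. If $f$ is an equivalence over $x$, then applying Lemma \ref{lemma: sections of equivalence in infinity category contractible} inside the $\infty$-category $\cCi/x$ shows that both the section space $\mathpzc{RInv}_f$ and the retraction space $\mathpzc{LInv}_f$ are contractible, hence so is their product, the fibre. If $f$ is not an equivalence over $x$, then $f$ cannot admit both a left and a right inverse, since any map with both is invertible: in $h(\cCi/x)$ one has $[h]\simeq[h][\mathrm{id}]\simeq[h][f][g]\simeq[\mathrm{id}][g]\simeq[g]$, making $[f]$ an isomorphism. Hence at least one of $\mathpzc{LInv}_f$, $\mathpzc{RInv}_f$ is empty, so the fibre is empty. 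In either case the fibre is empty or contractible, so the projection is $(-1)$-truncated for every $x$ and $\pi$ is a monomorphism.

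The main obstacle is the bookkeeping in the second paragraph: one must check that the homotopy pullbacks defining $\mathpzc{RInv}$ and $\mathpzc{LInv}$ really compute the section and retraction spaces of $f$ with the correct composition order — so that the component $c_{b,a,b,\infty}$ records $fg$ and $c_{a,b,a,\infty}$ records $hf$, against the constant-identity maps $const_b$, $const_a$ — using the explicit fibration model of these pullbacks. Once this identification is secured, Lemma \ref{lemma: sections of equivalence in infinity category contractible} supplies the contractibility and the argument closes immediately.
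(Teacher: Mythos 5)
Your argument is correct and reaches the result through the same basic reduction the paper uses -- Lemma \ref{lemma: different characterisations of monomorphism in infty cat} to pass to fibres, plus Lemma \ref{lemma: sections of equivalence in infinity category contractible} for contractibility -- but the middle of the argument is organised differently. The paper never identifies the fibre of $\mathpzc{BiInv}(x\times a,x\times b)\to\mathpzc{Map}(x\times a,x\times b)$ with the section and retraction spaces of $f$; instead it forms the internal fibre objects $\myuline{\mathpzc{RInv}}_f,\myuline{\mathpzc{LInv}}_f$ in $\cCi/x$, exhibits $x\times\bullet$ as a mapping path object for the internal post-composition map $f_*$, invokes Lemma \ref{lemma: precomposition map in  infinity cat with eqivalence is equivalence} to see that $f_*$ is an equivalence, and concludes $\myuline{\mathpzc{BiInv}}_f\simeq x$; only then does it apply the section-space lemma, to the equivalence $\overline{f}^*\pi$ rather than to $f$ itself. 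Your route trades that internal computation for the external identification of $\mathpzc{RInv}_f$ and $\mathpzc{LInv}_f$ with the genuine section and retraction spaces of $f$ in $\cCi/x$, which is exactly the ``bookkeeping'' you flag: it requires knowing that $C_{b,a,b,\infty}(f,-)$ and honest post-composition with $f$ agree as maps of spaces (not merely on $\pi_0$), which is the same content the paper extracts from Lemma \ref{lemma: precomposition map in  infinity cat with eqivalence is equivalence} and the composition-law diagram preceding Construction \ref{construction: object of equivalences infinity cat}. So you have not avoided that input, only relocated it; on the other hand your explicit split into the empty case (no two-sided inverse when $f$ is not an equivalence) and the contractible case is a clean and slightly more transparent way to package the dichotomy that the paper handles with the single sentence ``suppose it is non-empty.''
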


    \begin{proof}
        By Lemma \ref{lemma: different characterisations of monomorphism in infty cat}, it suffices to show that for any $\overline{f} \colon  x \to \myuline{\mathpzc{Map}}(a,b)$, the space $\mathpzc{Map}_{\cCi/\myuline{\mathpzc{Map}}(a,b)}(\overline{f}, \pi)$ is either empty or contractible. Suppose it is non-empty, then $\overline{f}$ is the transpose of an equivalence $f \colon  x \times a \to x \times b$ over $x$. Define $\myuline{\mathpzc{RInv}}_f$ as the pullback
    \[\begin{tikzcd}[every label/.append style={outer sep=0.2cm}]
	{\myuline{\mathpzc{RInv}}_f} & {\myuline{\mathpzc{RInv}}(a,b)} & {\myuline{\mathpzc{Map}}(b,b)} \\
	{x \times \myuline{\mathpzc{Map}}(b,a)} & {\myuline{\mathpzc{Map}}(a,b) \times \myuline{\mathpzc{Map}}(b,a)} & {\myuline{\mathpzc{Map}}(b,b) \times \myuline{\mathpzc{Map}}(b,b)} \\
	\\
	& {} & {}
	\arrow[from=1-1, to=1-2]
	\arrow[from=1-1, to=2-1]
	\arrow["\lrcorner"{anchor=center, pos=0.125}, draw=none, from=1-1, to=4-2]
	\arrow[from=1-2, to=1-3]
	\arrow[from=1-2, to=2-2]
	\arrow[from=1-2, to=2-2]
	\arrow["\lrcorner"{anchor=center, pos=0.125}, draw=none, from=1-2, to=4-3]
	\arrow["{(id,id)}", from=1-3, to=2-3]
	\arrow["{\overline{f}\times id_{\myuline{Map}(b,a)}}"', from=2-1, to=2-2]
	\arrow["{(c_{b,a,b,\infty},const_b)}"', from=2-2, to=2-3]
    \end{tikzcd}\]

    \vspace{-1cm}

    \noindent Pulling back this diagram along $x \to 1$ we obtain a diagram of pullbacks

\[\begin{tikzcd}[scale cd=0.9, every label/.append style={outer sep=0.2cm}]
	{\myuline{\mathpzc{RInv}}_f} & {x \times \bullet} & {x \times \myuline{\mathpzc{Map}}(b,b) } \\
	{(x \times \myuline{\mathpzc{Map}}(b,a)) \times_x (x \times 1)} & {(x \times \myuline{\mathpzc{Map}}(b,a)) \times_x (x \times \myuline{\mathpzc{Map}}(b,b))} & {(x \times \myuline{\mathpzc{Map}}(b,b)) \times_x (x \times \myuline{\mathpzc{Map}}(b,b))} \\
	{x \times 1} & {x \times\myuline{\mathpzc{Map}}(b,b)} \\
	\\
	& {} & {}
	\arrow[from=1-1, to=1-2]
	\arrow[from=1-1, to=2-1]
	\arrow["\lrcorner"{anchor=center, pos=0.125}, draw=none, from=1-1, to=5-2]
	\arrow[from=1-2, to=1-3]
	\arrow[from=1-2, to=2-2]
	\arrow["{(p_1,p_2)}", from=1-2, to=2-2]
	\arrow["\lrcorner"{anchor=center, pos=0.125}, draw=none, from=1-2, to=5-3]
	\arrow[from=1-3, to=2-3]
	\arrow["{id \times_x (id_x  \times\overline{id_b})}"', from=2-1, to=2-2]
	\arrow["{\pi_2}", from=2-1, to=3-1]
	\arrow["\lrcorner"{anchor=center, pos=0.125}, draw=none, from=2-1, to=5-2]
	\arrow["{f_* \times_x id}"', from=2-2, to=2-3]
	\arrow["{\pi_2}", from=2-2, to=3-2]
	\arrow["{id_x  \times\overline{id_b}}"', from=3-1, to=3-2]
    \end{tikzcd}\]

        \vspace{-1.5cm}

        \noindent in $\cCi/x$. Since $f$ is an equivalence so is the post-composition map $f_*$ by Lemma \ref{lemma: precomposition map in  infinity cat with eqivalence is equivalence}. Thus, $p_2$ is an equivalence. Therefore, the map
        $$\myuline{\mathpzc{RInv}}_f \to x \times 1 \simeq x$$
        is the pullback of an equivalence, hence an equivalence. So $\myuline{\mathpzc{RInv}}_f \simeq x$. Dually, one shows that $\myuline{\mathpzc{LInv}}_f \simeq x$. It follows that $\myuline{\mathpzc{BiInv}}_f := \myuline{\mathpzc{RInv}}_f \times_x \myuline{\mathpzc{LInv}}_f \simeq x$. In particular, we have a pullback diagram

        \[\begin{tikzcd}
            {\myuline{\mathpzc{BiInv}}_f} & {\myuline{\mathpzc{BiInv}}(a,b)} \\
            x & {\myuline{\mathpzc{Map}}(a,b)} \\
            & {}
            \arrow[from=1-1, to=1-2]
            \arrow["\simeq"', from=1-1, to=2-1]
            \arrow["\lrcorner"{anchor=center, pos=0.125}, draw=none, from=1-1, to=3-2]
            \arrow["\pi", from=1-2, to=2-2]
            \arrow["{\overline{f}}"', from=2-1, to=2-2]
        \end{tikzcd}\]

        \vspace{-1cm}

        Since the pullback functor $\overline{f}^*$ alsways has a left adjoint induced by composition with $\overline{f}$, there is an equivalence of spaces
        $$\mathpzc{Map}_{\cCi/\myuline{\mathpzc{Map}}(a,b)}(\overline{f}, \pi) \simeq \mathpzc{Map}_{\cCi/x}(id_x, \overline{f}^*\pi) $$
        between the space of morphisms from $\overline{f}$ to $\pi$ in the slice over $x$ and sections of pullback $\overline{f}^*\pi$. But $\overline{f}^*\pi$ is the equivalence $\myuline{\mathpzc{BiInv}}_f \to x$ so this space of sections is contractible by Lemma \ref{lemma: different characterisations of monomorphism in infty cat}, hence $\mathpzc{Map}_{\cCi/\myuline{\mathpzc{Map}}(a,b)}(\overline{f}, \pi)$ is contractible.
        \end{proof}

\begin{proposition}
    \label{prop: right fibration of equivalences is always representable}
    For any finitely complete and cartesian closed $\infty$-category $\cCi$,  $BiInv(a,b) \to \cCi$ and $Eq(a,b) \to \cCi$ are equivalent as right fibrations. Hence, the object of equivalences $\myuline{\mathpzc{BiInv}}(a,b)$ induces a representation of the right fibration $Eq(a,b) \to \cCi$. 
\end{proposition}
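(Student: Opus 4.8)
The plan is to exhibit both $Eq(a,b)\to\cCi$ and $BiInv(a,b)\to\cCi$ as subobjects of the right fibration $Map(a,b)\to\cCi$ and to show that these two subobjects coincide. By Definition \ref{def: fibration of equivalences in infty cat} the inclusion $Eq(a,b)\to Map(a,b)$ is fiberwise $(-1)$-truncated, hence a monomorphism of right fibrations. For $BiInv(a,b)$, the representing map $\pi\colon\myuline{\mathpzc{BiInv}}(a,b)\to\myuline{\mathpzc{Map}}(a,b)$ is a monomorphism by Proposition \ref{proposition: projection from equivalence (biinv) to hom is monomorphism  infinity category}; since the Yoneda embedding is fully faithful and preserves $(-1)$-truncated maps, the induced map of representable right fibrations $BiInv(a,b)\to Map(a,b)$ is a monomorphism as well. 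Under the straightening--unstraightening equivalence both therefore correspond to subobjects of the presheaf $\mathpzc{Map}(-\times a,-\times b)\colon\cCi^{op}\to\mathbf{Spc}$ in the $\infty$-topos $\mathrm{Fun}(\cCi^{op},\mathbf{Spc})$, where subobjects of a fixed object form a poset determined by the components of $\mathpzc{Map}_{\cCi/x}(x\times a,x\times b)$ they contain, naturally in $x$. It thus suffices to check that the two subobjects select the same connected components fiberwise.

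Next I would characterise these components by passing to the homotopy category. A point of $\mathpzc{Map}_{\cCi/x}(x\times a,x\times b)$ up to homotopy is a morphism $[f]$ in $h(\cCi/x)$. By the explicit description of $\mathpzc{BiInv}(x\times a,x\times b)$ given in Construction \ref{construction: object of equivalences infinity cat} as the space of triples $(f,g,h)$ with $[f][g]=[\mathrm{id}]$ and $[h][f]=[\mathrm{id}]$, such a component lies in the image of $BiInv$ precisely when $[f]$ admits both a left and a right inverse in $h(\cCi/x)$. On the other hand, the component lies in the image of $Eq$ precisely when $f$ is an equivalence in $\cCi/x$, that is, when $[f]$ is an isomorphism in $h(\cCi/x)$. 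The elementary $1$-categorical fact that a morphism possessing both a one-sided left inverse and a one-sided right inverse is already invertible, with the two one-sided inverses agreeing, shows that these two conditions on $[f]$ are equivalent. Hence the two subobjects select exactly the same components of each fiber.

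Because the defining property---being an equivalence, equivalently being bi-invertible---is stable under the base-change maps that make $\mathpzc{Eq}$ and $\mathpzc{BiInv}$ into subfunctors of $\mathpzc{Map}(-\times a,-\times b)$, this pointwise agreement is automatically natural in $x$, so the two subobjects coincide. This yields an equivalence $BiInv(a,b)\simeq Eq(a,b)$ of right fibrations over $\cCi$. Finally, $BiInv(a,b)\to\cCi$ is representable with representing object $\myuline{\mathpzc{BiInv}}(a,b)$, being built in Construction \ref{construction: object of equivalences infinity cat} as a finite limit of representable right fibrations; transporting this representation along the equivalence shows that $Eq(a,b)\to\cCi$ is represented by $\myuline{\mathpzc{BiInv}}(a,b)$ as well.

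I expect the main obstacle to be the bookkeeping in the first paragraph: making precise, via straightening--unstraightening, that a monomorphism of right fibrations over $\cCi$ is the same datum as a subobject of an $\mathbf{Spc}$-valued presheaf, and that two such subobjects with the same fiberwise $\pi_0$-support are equal as subobjects, so that the pointwise comparison genuinely assembles into a single equivalence of right fibrations rather than merely a fiberwise one. Once this posetal, support-based description of subobjects is in place, the remaining content is the purely $1$-categorical identification of invertible with bi-invertible morphisms in each homotopy category $h(\cCi/x)$.
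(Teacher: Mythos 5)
Your proposal is correct and follows essentially the same route as the paper: both arguments rest on Proposition \ref{proposition: projection from equivalence (biinv) to hom is monomorphism  infinity category} to see that $BiInv(a,b)$ and $Eq(a,b)$ sit inside $Map(a,b)$ as fiberwise $(-1)$-truncated subfibrations, and both reduce the comparison to a surjectivity check on $\pi_0$ of the fibers, using that a map with separate left and right homotopy inverses is an equivalence. The only difference is packaging: the paper produces the comparison directly as a natural transformation $BiInv(a,b)\to Eq(a,b)$ via the Yoneda lemma and checks it is a pointwise equivalence, whereas you phrase the same content in terms of the subobject poset of $\mathpzc{Map}(-\times a,-\times b)$, which carries the bookkeeping burden you flag but changes nothing substantive.
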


\begin{proof}
    We know that $Eq(a,b) \to \cCi$ is a subfibration of $Map(a,b) \to \cCi$ such that on fibers over objects $x$, the map $\mathpzc{Eq}_{\cC_{\infty}/x}(x \times a,x \times b) \to \mathpzc{Map}_{\cC_{\infty}/x}(x \times a,x \times b)$ is an inclusion, and it is in fact (-1)-truncated. We also know from Construction \ref{construction: object of equivalences infinity cat} that $\myuline{\mathpzc{BiInv}}(a,b)$ represents the right fibration $BiInv(a,b) \to \cCi$.
    
    To show that $BiInv(a,b) \to \cCi$ and $Eq(a,b) \to \cCi$ are equivalent, it suffices by the straightening-unstraightening correspondence to show that the corresponding functors $\cCi^{op} \to \mathbf{Spc}$ are naturally equivalent. For this we need to find first a natural transformation between them and then show that it is an equivalence. By the pointwise criterion for natural equivalences \cite[Cor. 2.2.2]{land_introduction_2021}, it suffices to show that the natural transformation induces a pointwise equivalence 
    $$\mathpzc{BiInv}_{\cC_{\infty}/x}(x \times a,x \times b) \simeq \mathpzc{Eq}_{\cC_{\infty}/x}(x \times a,x \times b)$$
    of spaces. For readability, we will drop from now on the subscripts. Observe first, that the projection $\pi \colon  \myuline{\mathpzc{BiInv}(a,b)} \to \myuline{\mathpzc{Map}(a,b)}$ induces by the $\infty$-Yoneda lemma indeed a natural transformation $\mathpzc{BiInv}(- \times a,- \times b) \to \mathpzc{Map}(- \times a,- \times b)$ which gives us on points a natural map $\mathpzc{BiInv}(x \times a,x \times b) \to \mathpzc{Map}(x \times a,x \times b)$. This map is in fact (-1)-truncated, since $\pi$ is a monomorphism by Proposition \ref{proposition: projection from equivalence (biinv) to hom is monomorphism  infinity category}. Moreover, any morphism in the image of $\pi$ is an equivalence. Thus $\pi$ induces a (-1)-truncated map
    $$\mathpzc{BiInv}(x \times a,x \times b) \to \mathpzc{Eq}(x \times a,x \times b).$$
    To show this is an equivalence, it suffices by (-1)-truncatedness to show that it induces a surjection on components
    $$\pi_0\mathpzc{BiInv}(x \times a,x \times b) \to \pi_0 \mathpzc{Eq}(x \times a,x \times b).$$
    This is however clear since the map is itself surjective on objects. For, if $f \colon  x \times a \to x \times b$ is any equivalence, then choosing left and right inverses $g$ and $h$, we get that $(f,g,h) \in \mathpzc{BiInv}(x \times a,x \times b)$ is in the preimage of $f$.
\end{proof}

\subsection{Univalent Morphisms}

Now, we will define univalence in $\infty$-categories. Given a morphism $p \colon e \to b$ in a locally cartesian closed $\infty$-category $\cC$, we consider the right fibration 
$$Eq_{b\times b}(\pi_1^*p, \pi_2^*p) \to \cCi/b \times b$$
where $\pi_i \colon b \times b \to b$ are the product projections. Its fiber over $(f,g) \colon x \to b \times b$ is the space $\mathpzc{Eq}_{\cCi/x}(f^*p, g^*p) \subseteq \mathpzc{Map}_{\cCi/x}(f^*p, g^*p)$ of equivalences $f^*p \simeq g^*p$ over $x$.

Consider the diagonal $(id,id) \colon b \to b \times b$. The map $\cCi/(b\times b)/(id,id)\cong \cCi/(id,id) \to \cCi/b$ is in fact a trivial Kan fibration by the pushout-join/pullback-slice adjunction and the fact that the inclusion $\{1\} \to \Delta^1$ is right anodyne. Choosing a section, we obtain an equivalence $s \colon  \cCi/b \xrightarrow{\simeq } (\cCi/(b\times b))/(id,id)$. 

The right fibration $(\cCi/(b\times b))/(id,id) \to \cCi/b\times b$ is always representable since $(id,id)$ is a terminal object in its domain. A point in the fiber of $Eq_{b\times b}(\pi_1^*p, \pi_2^*p) \to \cCi/b \times b$ over $(id,id)$ is an equivalence $e \to e$ over $b$. Thus, by the $\infty$-Yoneda lemma, the identity $e \to e$ corresponds to a map $\gamma \colon  (\cCi/(b\times b))/(id,id) \to Eq_{b\times b}(\pi_1^*p, \pi_2^*p)$. Composing this map with the section $s$ we obtain a map $\delta_e$ as well as a map $\cCi/b \to \cCi/b \times b$ as in 
\[\begin{tikzcd}
	{\cCi/b} & {(\cCi/b\times b)/(id,id)} \\
	\\
	{\cCi/b\times b} & {Eq_{b\times b}(\pi_1^*p,\pi_2^*p)}
	\arrow["s"', shift right, from=1-1, to=1-2]
	\arrow[dashed, from=1-1, to=3-1]
	\arrow["{\delta_e}", dashed, from=1-1, to=3-2]
	\arrow["\simeq"', shift right, two heads, from=1-2, to=1-1]
	\arrow["\gamma", from=1-2, to=3-2]
	\arrow[from=3-2, to=3-1]
\end{tikzcd}\]
On objects, $\delta_e$ sends a morphism $f \colon  x \to b$ to the identity $id_{f^*p} \in \mathpzc{Eq}_{\cCi/x}(f^*p, f^*p)$ in the fiber over $(f,f) \colon x \to b \times b$. By 2-out-of-3, $\delta_e$ is a categorical equivalence if and only if $\gamma$ is. This makes the following definition of univalence well-defined:

\begin{definition}[{Cf. \autocite[Prop. 3.8]{gepner_univalence_2017}}]
    \label{def: univalence in infinity category1}
    A morphism $p \colon e \to b$ in an $\infty$-category $\cCi$ is \textit{univalent} if the map $\delta_e$ in the commutative digram
    \[\begin{tikzcd}
    	{\cCi/b} && {Eq_{b\times b}(\pi_1^*p,\pi_2^*p)} \\
    	& {\cCi/b\times b}
    	\arrow["{\delta_{e}}", from=1-1, to=1-3]
    	\arrow[from=1-1, to=2-2]
    	\arrow[from=1-3, to=2-2]
    \end{tikzcd}\]
    is a categorical equivalence.
\end{definition}

This definition is a variation of the definition by Gepner and Kock \cite[Def. 3.2]{gepner_univalence_2017}, but it does not require the fibration of equivalences to be representable nor $\cCi$ to be presentable, which are two further assumptions made in \cite{gepner_univalence_2017}.

However, by Proposition \ref{prop: right fibration of equivalences is always representable}, if $\cCi$ is finitely complete and locally cartesian closed, then, the right fibration $Eq_{b\times b}(\pi_1^*p,\pi_2^*p) \to \cCi/b\times b$ is indeed representable and represented by some object $\myuline{\mathpzc{Eq_b}}(e) \to b \times b$ in $\cCi/b \times b$. Then, $p \colon  e \to b$ is univalent if and only if the map $\gamma$ from above is an equivalence between representable right fibrations. This in turn is by the $\infty$-Yoneda lemma the case if and only if the corresponding map between representing objects 
\[\begin{tikzcd}
	b && {\myuline{\mathpzc{Eq_b}}(e)} \\
	& {b \times b}
	\arrow[from=1-1, to=1-3]
	\arrow["{(id,id)}"', from=1-1, to=2-2]
	\arrow[from=1-3, to=2-2]
\end{tikzcd}\]
is an equivalence. This last statement is the definition of univalence used in \cite{gepner_univalence_2017}.

\begin{definition}[Cf. Def. \ref{def: local class tribe}]
\label{def: local class infty cat}
    Let $\cCi$ be an $\infty$-category. A \textit{local class} is a class of morphisms $S$ in $\cCi$ that is closed under pullback. A local class in a finitely complete and locally cartesian closed $\infty$-category $\cCi$ is \textit{closed}, if $S$ is closed under composition and if for all morphisms $f,g \in S$, $\Pi_{f}(g) \in S$. A local class is $p$-\textit{bounded} if all of its morphisms are obtained as pullbacks of some morphism $p$.
\end{definition}

\begin{definition}[Cf. Def. \ref{def: local class of pullbacks tribe}]
    Given a morphism $p \colon e \to b$ in an $\infty$-category, denote as $S_p$ the class of all morphisms that are pullbacks of $p$. This is a $p$-bounded local class.
\end{definition}

\begin{definition}[Cf. Def. \ref{def: enough univalent fibs tribe}]
\label{def: enough univalent fibs infty cat}
    
    A finitely complete and locally cartesian closed $\infty$-category has \textit{enough univalent morphisms} if for every morphism $f \colon a \to x$, there is a univalent morphism $p$ such that $f \in S_p$ and $S_p$ is closed.
\end{definition}

Next, we collect some equivalent characterisations of univalence for which we need some further definitions.

\begin{definition}[{\cite[§3.3]{gepner_univalence_2017}}]

Let $S$ be a local class of  morphisms in an $\infty$-category $\cCi$. Let $\cO_{\cCi}^{(S)}$ denote the full subcategory of the arrow category $\mathpzc{Fun}(\Delta^1,\cCi)$ containing only the maps in a class $S$ and consisting of those morphisms which are pullback squares. For any object $c$ we denote as of $(\cCi/c)^{\mathsf{gpd}}$ the maximal subgroupoid of $\cCi/c$ containing all equivalences. We denote as $(\cCi/c)^{\mathsf{gpd}, S}$ the full subgroupoid of $(\cCi/c)^{\mathsf{gpd}}$ spanned by the morphisms in $S$ with codomain $c$.
\end{definition}

The codomain fibration $\cO_{\cCi}^{(S)} \to \cCi$ induced by $\{1\} \subseteq \Delta^1$ is a right fibration by \cite[\;6.1.3.4]{lurie_higher_2009} and its fiber over an object $c$ is the $\infty$-groupoid $(\cCi/c)^{\mathsf{gpd}, S}$. For any object $c$, the fiber of the representable right fibration $\cCi/c \to \cCi$ over some object $x$ is equivalent to the space $\mathpzc{Map}(x,c)$. Given a morphism $p \colon e \to b$ and a local class $S$ with $p \in S$, by the $
\infty$-Yoneda lemma there is an induced morphism of right fibrations 
$$\kappa_p  \colon \cCi/b \to \cO_{\cCi}^{(S)} $$
over $\cCi$ which acts on objects by sending $f$ to the pullback $f^*p$. The fiber of this morphism over an object $x \in \cCi$ is the map $\mathpzc{Map}(x,b) \to (\cCi/x)^{\mathsf{gpd}, S}$ acting via pullback.

\begin{proposition}[Cf. {\autocite[Prop. 3.8]{gepner_univalence_2017}}]

\label{prop: different characterisations of univalence in infty-cat}

Let $p \colon e \to b$ be a morphism in an $\infty$-category $\cCi$ and let $S_p$ be the local class of maps obtained as pullbacks of $p$. The following are equivalent:

\begin{enumerate}
    \item $p$ is univalent.
    \item $\kappa_p  \colon \cCi/b \to \cO_{\cCi}^{(S_p)} $ is an equivalence of right fibrations, i.e. on fibers $\mathpzc{Map}(x,b) \to (\cCi/x)^{\mathsf{gpd}, S_p}$ is an equivalence for all $x \in \cCi$ inducing a representation of  $\cO_{\cCi}^{(S_p)} \to \cCi$.
    \item $p$ is terminal in $\cO_{\cCi}^{(S_p)}.$\qed
\end{enumerate}
\end{proposition}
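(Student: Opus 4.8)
The plan is to establish the two equivalences $(2)\Leftrightarrow(3)$ and $(1)\Leftrightarrow(2)$ separately, reducing each to a computation of fibers of the relevant right fibrations over $\cCi$ together with the $\infty$-categorical Yoneda lemma. Throughout I would use that $\cCi/b \to \cCi$ is the representable right fibration with fibre $\mathpzc{Map}(x,b)$ over $x$, that $\cO_{\cCi}^{(S_p)}\to\cCi$ is a right fibration with fibre $(\cCi/x)^{\mathsf{gpd},S_p}$ over $x$, and that $\kappa_p$ is the map of right fibrations over $\cCi$ classified, via Yoneda, by the element $p\in (\cCi/b)^{\mathsf{gpd},S_p}$ in the fibre over $b$, acting on fibres by $\phi\mapsto\phi^*p$.

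For $(2)\Leftrightarrow(3)$ I would invoke only the formal theory of representable right fibrations. Since $\kappa_p(\mathrm{id}_b)=\mathrm{id}_b^*p\simeq p$, the map $\kappa_p$ is exactly the morphism $\cCi/b\to\cO_{\cCi}^{(S_p)}$ induced by the point $p$ of the fibre $(\cCi/b)^{\mathsf{gpd},S_p}$ under the Yoneda correspondence. A right fibration $R\to\cCi$ is representable if and only if it admits a terminal object, in which case that terminal object is the universal element and the classified map $\cCi/c\to R$ is an equivalence; conversely any element whose classified map is an equivalence is terminal. Applying this with $R=\cO_{\cCi}^{(S_p)}$ and the element $p$ over $b$ gives precisely the equivalence of $(2)$ and $(3)$.

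For $(1)\Leftrightarrow(2)$ I would analyse $\kappa_{p,x}\colon \mathpzc{Map}(x,b)\to(\cCi/x)^{\mathsf{gpd},S_p}$ on fibres. Essential surjectivity is automatic: every object of $(\cCi/x)^{\mathsf{gpd},S_p}$ is by definition of $S_p$ a pullback $\phi^*p$ of $p$, hence in the image of $\kappa_{p,x}$. Since a $\pi_0$-surjective map of spaces is an equivalence as soon as it induces equivalences on all path spaces of the source, condition $(2)$ is equivalent to: for all $x$ and all $\phi,\psi\colon x\to b$, the induced map $\mathpzc{Path}_{\mathpzc{Map}(x,b)}(\phi,\psi)\to\mathpzc{Eq}_{\cCi/x}(\phi^*p,\psi^*p)$ is an equivalence, where the target is the path space of $(\cCi/x)^{\mathsf{gpd},S_p}$ between $\phi^*p$ and $\psi^*p$, i.e. the space of equivalences $\phi^*p\simeq\psi^*p$ over $x$. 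On the other hand, by Proposition \ref{prop: right fibration of equivalences is always representable} univalence is equivalent to the map $\delta_e\colon b\to\myuline{\mathpzc{Eq_b}}(e)$ over $b\times b$ being an equivalence, which by the pointwise Yoneda criterion holds iff it induces equivalences on mapping spaces out of every $(\phi,\psi)\colon x\to b\times b$. Here the representability of $Eq_{b\times b}(\pi_1^*p,\pi_2^*p)$ identifies $\mathpzc{Map}_{\cCi/b\times b}((\phi,\psi),\myuline{\mathpzc{Eq_b}}(e))\simeq\mathpzc{Eq}_{\cCi/x}(\phi^*p,\psi^*p)$, and the standard identification of maps into a diagonal with a path space identifies $\mathpzc{Map}_{\cCi/b\times b}((\phi,\psi),(\mathrm{id},\mathrm{id}))\simeq\mathpzc{Path}_{\mathpzc{Map}(x,b)}(\phi,\psi)$ (contracting the auxiliary factor $\chi$). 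Thus univalence unwinds to the very same family of equivalences, and $(1)\Leftrightarrow(2)$ follows.

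The main obstacle I anticipate is not any of the individual identifications but the bookkeeping that the two maps being compared genuinely agree: I must check that, under the identifications above, the path-space map induced by $\delta_e$ coincides with the transport map $\kappa_{p,x}$ sending a homotopy $\phi\simeq\psi$ to the induced equivalence $\phi^*p\simeq\psi^*p$. This amounts to verifying that $\delta_e$ sends an object $f\colon x\to b$ to the identity equivalence of $f^*p$ (which is how $\delta_e$ was constructed, via the point $\mathrm{id}_e$ and the $\infty$-Yoneda lemma) and that the comparison is natural in $(\phi,\psi)$; once this naturality is pinned down, the equivalence of the two unwound conditions is immediate.
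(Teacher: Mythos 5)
The paper does not actually prove this proposition: it is stated with an immediate \qed and attributed to \autocite[Prop.~3.8]{gepner_univalence_2017}, so there is no in-paper argument to compare against. Judged on its own terms, your proposal is essentially the standard Gepner--Kock argument and is sound. The $(2)\Leftrightarrow(3)$ step is exactly the formal fact that an object of the total space of a right fibration is terminal iff the map it classifies from the representable is an equivalence, and your $(1)\Leftrightarrow(2)$ step correctly reduces both conditions to the same family of fiberwise statements: essential surjectivity of $\kappa_{p,x}$ is definitional for $S_p$, and full faithfulness unwinds to the maps $\mathpzc{Path}_{\mathpzc{Map}(x,b)}(\phi,\psi)\to\mathpzc{Eq}_{\cCi/x}(\phi^*p,\psi^*p)$ being equivalences, which is also what $\delta_e$ being a fiberwise equivalence of right fibrations over $\cCi/(b\times b)$ says.

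Two remarks. First, the detour through Proposition~\ref{prop: right fibration of equivalences is always representable} is unnecessary and quietly imports hypotheses (finite completeness and cartesian closedness) that the bare statement does not assume: you can compare $\delta_e$ with the identity section fiberwise as a map of right fibrations over $\cCi/(b\times b)$ directly, since the fiber of $Eq_{b\times b}(\pi_1^*p,\pi_2^*p)$ over $(\phi,\psi)$ is $\mathpzc{Eq}_{\cCi/x}(\phi^*p,\psi^*p)$ by construction, without ever representing it. Second, the compatibility check you flag at the end is the only place the argument is not fully carried out, but it does close: both $\kappa_p$ and $\delta_e$ are classified via the $\infty$-Yoneda lemma by a single point of a fiber ($p$ over $\mathrm{id}_b$, respectively $\mathrm{id}_{e}$ over $(\mathrm{id},\mathrm{id})$), so the two fiberwise maps agree on the universal element and hence agree as maps of right fibrations; naturality in $(\phi,\psi)$ comes for free from Yoneda rather than needing a separate verification.
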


The third condition in Proposition \ref{prop: different characterisations of univalence in infty-cat} encodes that every morphism in $S_p$ is a homotopy-unique pullback of $p$.

\begin{proposition}[{\autocite[Cor. 3.10]{gepner_univalence_2017}, \autocite[Prop. 2.5]{rasekh_univalence_2021}}, Cf. \ref{prop: univalence is preserved and reflected by pullback along homotopy monomorphisms}]

    \label{prop: pullback of univalent along monomorphism in infty cat}
    Let $p \colon e \to b$ a univalent morphism and let $f \colon x \to b$ be a morphism such that
    \[\begin{tikzcd}
    	x & e \\
    	y & b
    	\arrow["h", from=1-1, to=1-2]
    	\arrow["q"', from=1-1, to=2-1]
    	\arrow["\lrcorner"{anchor=center, pos=0.125}, draw=none, from=1-1, to=2-2]
    	\arrow["p", from=1-2, to=2-2]
    	\arrow["f"', from=2-1, to=2-2]
    \end{tikzcd}\]
    is a pullback. Then, $q$ is univalent if and only if $f$ is a monomorphism.
\qed \end{proposition}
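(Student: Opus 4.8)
The plan is to mirror, one step at a time, the proof of Proposition \ref{prop: univalence is preserved and reflected by pullback along homotopy monomorphisms}, replacing homotopy pullbacks in the tribe by genuine pullbacks in $\cCi$. I would work under the standing hypothesis that $\cCi$ is finitely complete and locally cartesian closed (as in Proposition \ref{prop: right fibration of equivalences is always representable}), so that objects of equivalences are representable; alternatively everything below can be carried out directly with the right fibrations $Eq_{-}(-,-)$. Thus the right fibration $Eq_{b\times b}(\pi_1^*p,\pi_2^*p) \to \cCi/b\times b$ is represented by $\myuline{\mathpzc{Eq_b}}(e) \to b\times b$, and $Eq_{y\times y}(\pi_1^*q,\pi_2^*q)$ by some $\myuline{\mathpzc{Eq_y}}(x) \to y\times y$. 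The first key step, an $\infty$-categorical analogue of Lemma \ref{lemma: pullback of object of equivalences is object of equivalences}, is to show $\myuline{\mathpzc{Eq_y}}(x) \simeq (f\times f)^*\myuline{\mathpzc{Eq_b}}(e)$. This follows from straightening--unstraightening and the pointwise criterion for equivalences of right fibrations: for $(g_1,g_2)\colon a \to y\times y$ the fiber of the left side is $\mathpzc{Eq}_{\cCi/a}(g_1^*q,g_2^*q)$, and since $q=f^*p$ gives $g_i^*q \simeq (fg_i)^*p$, this agrees with the fiber of $(f\times f)^*\myuline{\mathpzc{Eq_b}}(e)$ over $(g_1,g_2)$, namely the fiber of $\myuline{\mathpzc{Eq_b}}(e)$ over $(fg_1,fg_2)$.

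Next I would assemble the diagram

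\[\begin{tikzcd}
	y & b \\
	{\myuline{\mathpzc{Eq_y}}(x)} & {\myuline{\mathpzc{Eq_b}}(e)} \\
	{y \times y} & {b \times b}
	\arrow["f", from=1-1, to=1-2]
	\arrow["{\delta_q}"', from=1-1, to=2-1]
	\arrow["{\delta_e}", from=1-2, to=2-2]
	\arrow["k", from=2-1, to=2-2]
	\arrow[from=2-1, to=3-1]
	\arrow[from=2-2, to=3-2]
	\arrow["{f \times f}"', from=3-1, to=3-2]
\end{tikzcd}\]

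in which the lower square is the pullback from the previous step, $\delta_e$ and $\delta_q$ are the maps of Definition \ref{def: univalence in infinity category1} (the transposes of the identity equivalences of $p$ and $q$), and $k$ is the induced map of representing objects. Since each $\delta$-map lands over the diagonal, the composites $y \to \myuline{\mathpzc{Eq_y}}(x) \to y\times y$ and $b \to \myuline{\mathpzc{Eq_b}}(e) \to b\times b$ are the diagonals $\Delta_y$ and $\Delta_b$. The top square commutes because, by the $\infty$-Yoneda lemma, both $\delta_e \circ f$ and $k\circ \delta_q$ classify the identity equivalence $id_q$ of $q=f^*p$, hence are identified by a point of a single mapping space. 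By univalence of $p$, the map $\delta_e$ is an equivalence.

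Finally I would run the pasting argument. Using the canonical identification $y\times_b y \cong (y\times y)\times_{b\times b} b$, Lemma \ref{lemma: different characterisations of monomorphism in infty cat} shows that $f$ is a monomorphism precisely when the diagonal $y \to y\times_b y$ is an equivalence, equivalently when the outer rectangle of the diagram (verticals $\Delta_y,\Delta_b$, horizontals $f, f\times f$) is a pullback. As the lower square is a pullback, the pasting law gives that the outer rectangle is a pullback if and only if the top square is. Because $\delta_e$ is an equivalence, the projection $\myuline{\mathpzc{Eq_y}}(x)\times_{\myuline{\mathpzc{Eq_b}}(e)} b \to \myuline{\mathpzc{Eq_y}}(x)$ is an equivalence, and its composite with the comparison map out of $y$ is $\delta_q$; so by $2$-out-of-$3$ the top square is a pullback iff $\delta_q$ is an equivalence. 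Chaining these equivalences, $f$ is a monomorphism iff $\delta_q$ is an equivalence, which by Definition \ref{def: univalence in infinity category1} (via representability) is exactly the statement that $q$ is univalent.

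The main obstacle I expect is coherence rather than content: unlike in the tribe setting, the three squares must form a genuinely commuting diagram of $\infty$-categories, so the homotopy $\delta_e f \simeq k\delta_q$ and the identifications of the projection composites with the diagonals must be produced carefully. I would handle this by staying on the level of representable right fibrations and transporting everything through the $\infty$-Yoneda lemma, where the equality of classifying maps reduces to comparing two points of one mapping space. A minor supporting point is the standard fact, used twice, that pulling an equivalence back along any map yields an equivalence, which justifies the `one side an equivalence forces pullback $\Leftrightarrow$ other side an equivalence' step.
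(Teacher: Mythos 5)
The paper does not prove this proposition at all: it is stated with a \verb|\qed| and attributed to \autocite[Cor. 3.10]{gepner_univalence_2017} and \autocite[Prop. 2.5]{rasekh_univalence_2021}, so there is no internal proof to compare against. Your argument is correct and is exactly the expected one: it transports the paper's own tribe-level proof of Proposition \ref{prop: univalence is preserved and reflected by pullback along homotopy monomorphisms} (including its Lemma \ref{lemma: pullback of object of equivalences is object of equivalences}) to the $\infty$-categorical setting, which is also essentially the argument in the cited references. The identification of $f$ being a monomorphism with the outer rectangle being a pullback, the Yoneda argument that both $\delta_e f$ and $k\delta_q$ classify $\mathrm{id}_q$, and the $2$-out-of-$3$ step using that $\delta_e$ is an equivalence are all sound. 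Two small caveats. First, you invoke representability of the objects of equivalences, which requires $\cCi$ to be finitely complete and locally cartesian closed; the statement as printed does not assume this, but every use of it in the paper (e.g.\ Proposition \ref{proposition: top is univalent infty cat}) is in that setting, and as you note the argument can be rerun at the level of the right fibrations $Eq_{-}(-,-)$ themselves, where the pasting of pullbacks of right fibrations over $\cCi/(y\times y)$ and $\cCi/(b\times b)$ replaces the pasting of pullbacks of representing objects. Second, in your first step the pointwise criterion only applies once a comparison map of right fibrations over $\cCi/(y\times y)$ has been produced; this map exists canonically from the equivalences $g_i^*q \simeq (fg_i)^*p$ and the universal property, mirroring how the paper's Lemma \ref{lemma: pullback of object of equivalences is object of equivalences} first exhibits the pullback square before checking fibers, but it is worth making explicit.
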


\subsection{The Localisation of Univalent Tribes has Univalent Morphisms}

We will now turn to the proof that the $\infty$-localisation sends univalent fibrations to univalent morphisms.

\begin{proposition}

    \label{prop: object of equivalences in loclaisation}
    Let $\cC$ be $\pi$-tribe and $A$ and $B$ objects in $\cC$. Then the localisation of the object of equivalences $\gamma(\myuline{\mathsf{Eq}}(A,B))$ is an object of equivalences in $\cC_{\infty}$, i.e. $\gamma(\myuline{\mathsf{Eq}}(A,B)) \simeq \myuline{\mathpzc{BiInv}}(A,B)$, such that for any object $x \in \cC_{\infty}$ we have a representation

    $$\mathpzc{Map}_{\cC_{\infty}}(x, \gamma(\myuline{\mathsf{Eq}}(A,B))) \simeq \mathpzc{Eq}_{\cC_{\infty}}(x \times A, x \times B)$$
    of the right fibration $Eq(A,B) \to \cC_{\infty}$.
\end{proposition}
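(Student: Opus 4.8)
The plan is to exploit the fact that both $\myuline{\mathsf{Eq}}(A,B)$ and $\myuline{\mathpzc{BiInv}}(A,B)$ are assembled by exactly the same recipe of iterated finite pullbacks of internal hom-objects, composition maps and path-object (respectively diagonal) maps, and that the localisation functor $\gamma$ preserves all of this structure up to homotopy. Concretely, $\gamma$ preserves finite limits, and by the discussion following Proposition \ref{prop:adjoint functors derive adjoint functors unit/counit} it commutes with the right adjoint to pullback and with adjoint transposition. Since $\uHom(X,Y)$ is the value of the right adjoint to $-\times X$ and $\myuline{\mathpzc{Map}}(X,Y)$ is the corresponding right adjoint in $\cC_\infty$, this yields natural equivalences $\gamma(\uHom(X,Y)) \simeq \myuline{\mathpzc{Map}}(X,Y)$. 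I would first record these identifications together with the facts that $\gamma$ carries the tribe composition map $c_{X,Y,Z}$ to the $\infty$-categorical composition map $c_{X,Y,Z,\infty}$ and the constant map $const_X$ to $const_X$: both follow since these maps are defined by transposing the same composites of counits, and $\gamma$ respects counits and transposition up to homotopy.

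The crucial step is the treatment of path objects. In the tribe the right-hand leg of the square defining $\myuline{\mathsf{RInv}}(A,B)$ is the fibration $({\partial_0}_*,{\partial_1}_*) \colon \uHom(B,P_B) \to \uHom(B,B)\times\uHom(B,B)$, whereas in $\cC_\infty$ the corresponding leg of $\myuline{\mathpzc{RInv}}(A,B)$ is the diagonal $(id,id)$ on $\myuline{\mathpzc{Map}}(B,B)$. I would observe that the anodyne map $B \to P_B$ is a weak equivalence, so $\gamma(B\to P_B)$ is an equivalence and the path-object factorisation $B \xrightarrow{\sim} P_B \xrightarrow{(\partial_0,\partial_1)} B\times B$ is sent by $\gamma$ to a factorisation of the diagonal of $B$ through an equivalence; hence $\gamma((\partial_0,\partial_1))$ is equivalent to $(id,id)\colon B \to B\times B$. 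Applying the limit- and equivalence-preserving functor $\myuline{\mathpzc{Map}}(B,-)\simeq \gamma(\uHom(B,-))$ then identifies $\gamma(({\partial_0}_*,{\partial_1}_*))$ with the diagonal on $\myuline{\mathpzc{Map}}(B,B)$, exactly as required. By Lemma \ref{lemma: object of equivalences is independent of choice of of path object} the choice of $P_B$ is immaterial, so any convenient one may be used.

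With these identifications in hand, $\gamma$ sends the pullback square of Construction \ref{constr: object of equiv} defining $\myuline{\mathsf{RInv}}(A,B)$ to a pullback square in $\cC_\infty$ whose cospan is precisely the one defining $\myuline{\mathpzc{RInv}}(A,B)$ in Construction \ref{construction: object of equivalences infinity cat}; since $\gamma$ preserves finite limits this gives $\gamma(\myuline{\mathsf{RInv}}(A,B)) \simeq \myuline{\mathpzc{RInv}}(A,B)$, and dually $\gamma(\myuline{\mathsf{LInv}}(A,B)) \simeq \myuline{\mathpzc{LInv}}(A,B)$, compatibly with the projections to $\gamma(\uHom(A,B))\simeq\myuline{\mathpzc{Map}}(A,B)$. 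Taking the final pullback of Construction \ref{constr: object of equiv} and again using preservation of pullbacks yields $\gamma(\myuline{\mathsf{Eq}}(A,B)) \simeq \myuline{\mathpzc{BiInv}}(A,B)$. Finally, since $\cC$ is a $\pi$-tribe the localisation $\cC_\infty$ is finitely complete and locally cartesian closed, so Proposition \ref{prop: right fibration of equivalences is always representable} applies and $\myuline{\mathpzc{BiInv}}(A,B)$ represents the right fibration $Eq(A,B)\to\cC_\infty$; transporting along the equivalence gives the asserted representation $\mathpzc{Map}_{\cC_\infty}(x, \gamma(\myuline{\mathsf{Eq}}(A,B))) \simeq \mathpzc{Eq}_{\cC_\infty}(x\times A, x\times B)$.

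I expect the main obstacle to be the careful bookkeeping that $\gamma$ really matches the structure maps, in particular that $\gamma(c_{X,Y,Z})\simeq c_{X,Y,Z,\infty}$ under the identification $\gamma(\uHom)\simeq\myuline{\mathpzc{Map}}$: this requires tracking the counit of the product–hom adjunction through the localisation (via Proposition \ref{prop:adjoint functors derive adjoint functors unit/counit}) and checking that transposing the localised composite of counits reproduces $c_{X,Y,Z,\infty}$ up to homotopy, rather than merely that the objects agree. The path-object step, by contrast, is conceptually the cleanest point, since localisation simply contracts path objects onto the diagonal.
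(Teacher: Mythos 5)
Your proposal is correct and follows essentially the same route as the paper's proof: both use that $\gamma$ preserves pullbacks along fibrations, identifies $\uHom$ with $\myuline{\mathpzc{Map}}$ via Proposition \ref{prop:adjoint functors derive adjoint functors unit/counit}, matches the composition and constant maps through adjoint transposition, and contracts the path object onto the diagonal to compare the two defining cospans. Your write-up is somewhat more explicit about the path-object leg and about invoking Proposition \ref{prop: right fibration of equivalences is always representable} for the final representability statement, but the argument is the same.
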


\begin{proof}
    We just need to show from what we established so far that the localisation preserves the required properties such that when applied to the object of equivalences in a tribe from Construction \ref{constr: object of equiv}, we get an object equivalent to the object of equivalences in an $\infty$-category from Construction \ref{construction: object of equivalences infinity cat}. 
    
    By \autocite[Prop. 7.5.6]{cisinski_higher_2019}, the localisation preserves pullbacks along fibrations. By Proposition \ref{prop:adjoint functors derive adjoint functors unit/counit}, $\gamma(\uHom_{\cC}(A,B)) \simeq \myuline{\mathpzc{Map}}_{\cC_{\infty}}(A,B)$. If we have a path object factorisation $B \to P_B \to B \times B$ of the diagonal into a homotopy equivalence followed by a fibration in the tribe $\cC$, the morphism $B \to P_B$ will become an equivalence in $\cC_{\infty}$ which induces a canonical equivalence $ \myuline{\mathpzc{Map}}_{\cC_{\infty}}(A,B) \simeq \myuline{\mathpzc{Map}}_{\cC_{\infty}}(A,P_B) $ of internal mapping space objects in $\cC_{\infty}$.

    By Proposition \ref{prop:adjoint functors derive adjoint functors unit/counit} the localisation commutes up to homotopy with adjoint transposition, thus $\gamma(c_{A,B,A}) \simeq c_{A,B, A,\infty}$ and $\gamma(c_{B,A,B}) \simeq c_{B,A, B,\infty}$. Moreover, the localisation of the constant identity morphism $\gamma(const_{A})$ is homotopic to the constant identity morphism in the localisation. Thus, by the definitions, we get equivalences $\gamma(\myuline{\mathsf{LInv}}(A,B)) \simeq \myuline{\mathpzc{LInv}}(A,B)$ and $\gamma(\myuline{\mathsf{RInv}}(A,B)) \simeq \myuline{\mathpzc{RInv}}(A,B)$. Hence, we get $\gamma(\myuline{\mathsf{Eq}}(A,B)) \simeq \myuline{\mathpzc{BiInv}}(A,B)$ as desired.
\end{proof}

\begin{corollary}
\label{corollary: localisation preserves object of equivalences local version}

    For any $(f,g) \colon x \to B \times B$ in $\cC_{\infty}$ and any fibration $p \colon E \to B$ in $\cC$, we have a representation
    $$\mathpzc{Map}_{\cC_{\infty}/B \times B}(x, \gamma(\myuline{\mathsf{Eq}}_B(E))) \simeq \mathpzc{Eq}_{\cC_{\infty}/x}(f^*\gamma(p),g^*\gamma(p)).$$
\end{corollary}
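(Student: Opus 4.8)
The plan is to deduce the local statement from the global one, Proposition \ref{prop: object of equivalences in loclaisation}, by working inside the fibrant slice, exactly as Corollary \ref{cor: representation of functor object of equivalences in tribe} was obtained from Proposition \ref{prop-Eq(-xA,B)-is-representable}. Recall that by definition $\myuline{\mathsf{Eq}}_B(E) = \myuline{\mathsf{Eq}}_{B\times B}(\pi_1^*p,\pi_2^*p)$ is the object of equivalences produced by running Construction \ref{constr: object of equiv} inside the $\pi$-tribe $\cD := \cC/\!\!/(B\times B)$ on the two fibrations $\pi_1^*p$ and $\pi_2^*p$. Since the fibrant slice of a $\pi$-tribe is again a $\pi$-tribe, Proposition \ref{prop: object of equivalences in loclaisation} applies verbatim to $\cD$: the localisation $\gamma_{\cD}(\myuline{\mathsf{Eq}}_{B\times B}(\pi_1^*p,\pi_2^*p))$ is an object of equivalences in $\cD_{\infty}$, and for every object $u$ of $\cD_{\infty}$ it yields a representation $\mathpzc{Map}_{\cD_{\infty}}(u, \gamma_{\cD}(\myuline{\mathsf{Eq}}_{B\times B}(\pi_1^*p,\pi_2^*p))) \simeq \mathpzc{Eq}_{\cD_{\infty}}(u \times \pi_1^*p,\, u \times \pi_2^*p)$ of the right fibration of equivalences, the products being formed in the slice, i.e. over $B\times B$.

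Next I would transport this along Cisinski's equivalence $(\cC/\!\!/(B\times B))_{\infty} \simeq \cC_{\infty}/(B\times B)$ from \cite[Cor. 7.6.13]{cisinski_higher_2019}, under which $\gamma_{\cD}$ is identified with the restriction of $\gamma$ to objects over $B\times B$, and which is compatible with the internal homs, fibered products, path objects and pullbacks along fibrations out of which the object of equivalences is built (all of which localise as in the proof of Proposition \ref{prop: object of equivalences in loclaisation}, via Proposition \ref{prop:adjoint functors derive adjoint functors unit/counit}). Under this equivalence $\gamma_{\cD}(\myuline{\mathsf{Eq}}_{B\times B}(\pi_1^*p,\pi_2^*p))$ corresponds precisely to $\gamma(\myuline{\mathsf{Eq}}_B(E)) \in \cC_{\infty}/(B\times B)$, and an object $u$ of $\cD_{\infty}$ corresponds to a morphism $(f,g)\colon x \to B\times B$ in $\cC_{\infty}$.

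It then remains to identify the right-hand side. Composing $(f,g)$ with $\pi_1$ gives $f$ and with $\pi_2$ gives $g$, so under the further equivalence $\cD_{\infty}/u \simeq (\cC_{\infty}/(B\times B))/(f,g) \simeq \cC_{\infty}/x$ the fibered product $u \times_{B\times B} \pi_1^*\gamma(p)$ becomes $f^*\gamma(p)$ over $x$, while $u \times_{B\times B} \pi_2^*\gamma(p)$ becomes $g^*\gamma(p)$ over $x$; hence $\mathpzc{Eq}_{\cD_{\infty}}(u\times \pi_1^*p,\, u\times\pi_2^*p) \simeq \mathpzc{Eq}_{\cC_{\infty}/x}(f^*\gamma(p),\, g^*\gamma(p))$. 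Substituting this into the representation above gives exactly the claimed equivalence, which represents the right fibration $Eq(\pi_1^*\gamma(p),\pi_2^*\gamma(p)) \to \cC_{\infty}/(B\times B)$ of Definition \ref{def: fibration of equivalences in infty cat} by $\gamma(\myuline{\mathsf{Eq}}_B(E))$. The main point requiring care is the bookkeeping of the two base changes: checking that the slice $\cD$ is genuinely a $\pi$-tribe so that the absolute statement applies, and pinning down that the two-fold pullback $x \times_{B\times B} \pi_i^*E$ formed in the slice really restricts to $f^*E$ and $g^*E$ respectively. Once this identification of the fibered products with the ordinary pullbacks along $f$ and $g$ is settled, the conclusion is a direct application of Proposition \ref{prop: object of equivalences in loclaisation} together with the compatibility of the $\infty$-localisation with slicing.
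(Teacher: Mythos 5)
Your proposal is correct and takes essentially the same route as the paper: the paper's entire proof is the one-line instruction to use the equivalence $(\cC/\!\!/(B \times B))_{\infty} \simeq \cC_{\infty}/(B\times B)$, i.e.\ to apply Proposition \ref{prop: object of equivalences in loclaisation} in the fibrant slice and transport the representation, which is exactly what you spell out (including the bookkeeping of fibered products versus pullbacks along $f$ and $g$ that the paper leaves implicit).
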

\begin{proof}
    Use the fact that $(\cC/\!\!/B \times B)_{\infty} \simeq \cC_{\infty}/B \times B$.
\end{proof}

\begin{theorem}
\label{theorem: univalence in a tribe implies univalence in infinity-cat}

    Let $\cC$ be $\pi$-tribe and let $p \colon E \to B$ be a univalent fibration. Let $\gamma  \colon \cC \to \cC_{\infty}$ be the localisation. Then $\gamma(f)$ is a univalent morphism in $ \cC_{\infty}$.
\end{theorem}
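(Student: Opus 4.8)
The plan is to transport the tribe-level witness of univalence, the homotopy equivalence $\delta_E \colon B \to \myuline{\mathsf{Eq}}_B(E)$, across the localisation and to recognise its image as the canonical identity-selecting map of Definition \ref{def: univalence in infinity category1} (I read the conclusion as asserting univalence of $\gamma(p)$). Since $\gamma$ carries homotopy equivalences to equivalences and preserves finite limits, and since $p$ is univalent in the sense of Definition \ref{def:univalent fibration-categorical def}, the map $\gamma(\delta_E) \colon B \to \gamma(\myuline{\mathsf{Eq}}_B(E))$ is an equivalence in $\cC_\infty$; moreover $\gamma(\partial_0,\partial_1) \colon \gamma(\myuline{\mathsf{Eq}}_B(E)) \to B \times B$ still sits under $\gamma(\delta_E)$ over the diagonal $(id,id) \colon B \to B \times B$, so this is an equivalence over $B \times B$.

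Next I would invoke Corollary \ref{corollary: localisation preserves object of equivalences local version}, which exhibits $\gamma(\myuline{\mathsf{Eq}}_B(E))$, together with its map to $B \times B$, as a representing object for the right fibration $Eq_{B\times B}(\pi_1^*\gamma(p),\pi_2^*\gamma(p)) \to \cC_\infty/B\times B$; that is, $\gamma(\myuline{\mathsf{Eq}}_B(E))$ plays the role of the object $\myuline{\mathpzc{Eq_B}}(\gamma(E))$ from the discussion following Definition \ref{def: univalence in infinity category1}. By that discussion, $\gamma(p)$ is univalent precisely when the canonical map $B \to \myuline{\mathpzc{Eq_B}}(\gamma(E))$ over $B \times B$, corresponding under the $\infty$-Yoneda lemma to the identity self-equivalence of $\gamma(p)$ in the fibre over $(id,id)$, is an equivalence. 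It therefore remains to identify $\gamma(\delta_E)$ with this canonical map.

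The crux is that both maps select the same point of the representing fibre. By Corollary \ref{corollary: localisation preserves object of equivalences local version}, maps $B \to \gamma(\myuline{\mathsf{Eq}}_B(E))$ over $(id,id)$ correspond to points of $\mathpzc{Eq}_{\cC_\infty/B}(\gamma(p),\gamma(p))$. In the tribe, $\delta_E$ was assembled from the identity $p \to p$ with identity inverses and constant witnessing homotopies (cf. Corollary \ref{cor: representation of functor object of equivalences in tribe}); because $\gamma$ preserves identities and products and, by Propositions \ref{prop:adjoint functors derive adjoint functors unit/counit} and \ref{prop: object of equivalences in loclaisation}, the composition and transposition data used to build the object of equivalences, $\gamma(\delta_E)$ corresponds to the identity self-equivalence $id_{\gamma(p)}$ --- exactly the point selected by the canonical map. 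Hence the two maps agree in the fibre and are equivalent over $B \times B$; as $\gamma(\delta_E)$ is an equivalence, so is the canonical map, and $\gamma(p)$ is univalent.

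I expect the main obstacle to be precisely this last identification: one must verify that the equivalence $\gamma(\myuline{\mathsf{Eq}}_B(E)) \simeq \myuline{\mathpzc{Eq_B}}(\gamma(E))$ supplied by Proposition \ref{prop: object of equivalences in loclaisation} is natural enough to carry $\gamma(\delta_E)$ to the $\infty$-categorical identity-selecting section. Concretely this is a compatibility between the tribe-level representation of Corollary \ref{cor: representation of functor object of equivalences in tribe} and the $\infty$-level representation of Proposition \ref{prop: right fibration of equivalences is always representable}, tracked through the fact (Proposition \ref{prop:adjoint functors derive adjoint functors unit/counit}) that $\gamma$ commutes with adjoint transposition.
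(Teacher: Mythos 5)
Your proposal is correct and follows essentially the same route as the paper: transport the homotopy equivalence $\delta_E$ through $\gamma$, use Corollary \ref{corollary: localisation preserves object of equivalences local version} to recognise $\gamma(\myuline{\mathsf{Eq}}_B(E))$ as an object of equivalences for $\gamma(p)$, identify $\gamma(\delta_E)$ with $\delta_{\gamma(E)}$ via the compatibility of $\gamma$ with adjoint transposition from Proposition \ref{prop:adjoint functors derive adjoint functors unit/counit}, and conclude by the $\infty$-Yoneda lemma. The "crux" you flag --- that $\gamma(\delta_E)$ selects the identity self-equivalence --- is exactly the step the paper handles by observing that $\delta_E$ is the adjoint transpose of the identity on $E$ over $B$.
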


\begin{proof}
    By univalence of $p$, we have a factorisation in $\cC$
    
\[\begin{tikzcd}
	B && {\myuline{\mathsf{Eq}}_B(E)} \\
	& {B \times B}
	\arrow["{\delta_E \sim}", from=1-1, to=1-3]
	\arrow["{(id,id)}"', from=1-1, to=2-2]
	\arrow[two heads, from=1-3, to=2-2]
\end{tikzcd}\]

    The localisation $\gamma$ sends $\delta_E$ to an equivalence in $\cC_{\infty}$ and preserves the diagonal. By Corollary \ref{corollary: localisation preserves object of equivalences local version}, it sends $\myuline{\mathsf{Eq}}_B(E)$  to an object of equivalences of $\gamma(p)$ in $\cC_{\infty}$.
    
    The morphism $\delta_E$ was defined to be the adjoint transpose of the identity on $E$ over $B$ via $p$ and by Proposition \ref{prop:adjoint functors derive adjoint functors unit/counit} the localiation commutes up to homotopy with adjoint transposition such that $\gamma(\delta_E)$ is equivalent transpose of the identity on $E$ over $B$. Thus $ \gamma(\delta_E) \simeq \delta_{\gamma(E)}$ and $\delta_{\gamma(E)}$ is an equivalence. By the $\infty$-Yoneda lemma this implies that the map that is represented by $\delta_{\gamma(E)}$ is an equivalence and hence that
    \[\begin{tikzcd}
    	{\cC_{\infty}/B} && {\mathpzc{Eq}_{B\times B}(\pi_1^*\gamma(p),\pi_2^*\gamma(p))} \\
    	& {\cC_{\infty}/B\times B}
    	\arrow[from=1-1, to=1-3]
    	\arrow[from=1-1, to=2-2]
    	\arrow[from=1-3, to=2-2]
    \end{tikzcd}\]
is an equivalence and so $\gamma(p)$ is univalent.
\end{proof}

\begin{corollary}
\label{thm: tribe enough universal families then localisation enough univalent morphisms}
    Let $\cC$ be a $\pi$-tribe that has enough univalent fibrations. Then, $\cC_{\infty}$ has enough univalent morphisms.
\end{corollary}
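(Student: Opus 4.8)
The plan is to reduce the statement to the tribe-level hypothesis by representing every morphism of $\cC_{\infty}$, and every class of pullbacks, by the image under $\gamma$ of the corresponding data in $\cC$. First I would record that the notion of Definition \ref{def: enough univalent fibs infty cat} even applies: since $\cC$ is a $\pi$-tribe, $\cC_{\infty}$ is finitely complete and locally cartesian closed by \autocite[Prop. 7.5.6, 7.6.16]{cisinski_higher_2019}, as recalled in the preliminaries.

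Now fix an arbitrary morphism $f \colon a \to x$ in $\cC_{\infty}$. Since $\gamma$ is the identity on objects we may write $x = \gamma(X)$, and by Cisinski's right calculus of fractions \autocite[7.2.10.4]{cisinski_higher_2019} we have $f = \gamma(u)\gamma(s)^{-1}$ with $s$ a trivial fibration and $u$ a morphism of $\cC$. Factoring $u$ as an anodyne map followed by a fibration $q \colon \tilde{A} \to X$, the anodyne part together with $\gamma(s)^{-1}$ become equivalences in the localisation, so $f$ is equivalent in the slice $\cC_{\infty}/x$ to $\gamma(q)$. Applying the hypothesis that $\cC$ has enough univalent fibrations (Definition \ref{def: enough univalent fibs tribe}) to $q$ yields a univalent fibration $p \colon E \to B$ with $q \in S_p$ and $S_p$ closed. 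By Theorem \ref{theorem: univalence in a tribe implies univalence in infinity-cat}, $\gamma(p)$ is a univalent morphism in $\cC_{\infty}$, and this is the morphism I would take as the required $p$ in Definition \ref{def: enough univalent fibs infty cat}.

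It remains to verify that $f \in S_{\gamma(p)}$ and that $S_{\gamma(p)}$ is closed. For membership, since $\gamma$ preserves pullbacks along fibrations and $p$ is a fibration, $\gamma$ carries the pullback square witnessing $q \in S_p$ to one witnessing $\gamma(q) \in S_{\gamma(p)}$; as $f \simeq \gamma(q)$ over $x$ and a local class is invariant under equivalence in the slice (pasting an equivalence square onto a pullback square, using the pasting lemma), we get $f \in S_{\gamma(p)}$. The crux is closedness of $S_{\gamma(p)}$, for which the key lemma I would establish is that $S_{\gamma(p)}$ is exactly the equivalence-closure of $\gamma(S_p)$: given any pullback of $\gamma(p)$ along $\psi \colon \gamma(X) \to \gamma(B)$, write $\psi = \gamma(v)\gamma(t)^{-1}$ with $t$ a trivial fibration and $v \colon X' \to B$ a morphism of $\cC$; since $p$ is a fibration we may pull it back along $v$ inside $\cC$, and $\gamma(v^{*}p) = \gamma(v)^{*}\gamma(p) \simeq \psi^{*}\gamma(p)$, so every element of $S_{\gamma(p)}$ is equivalent to $\gamma(q')$ for some $q' \in S_p$.

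With this identification, closure transfers from $S_p$ to $S_{\gamma(p)}$. For composition, I would choose representatives of two composable members of $S_{\gamma(p)}$ by genuinely composable fibrations $q_1, q_2 \in S_p$ (matching the intermediate object up to equivalence), and conclude from $\gamma(q_2 q_1) = \gamma(q_2)\gamma(q_1)$ together with closure of $S_p$. For dependent products, I would use that the localisation commutes with the $\Pi$-adjunction (Proposition \ref{prop:adjoint functors derive adjoint functors unit/counit} together with $(\cC/\!\!/X)_{\infty} \simeq \cC_{\infty}/X$), so that $\Pi_{\gamma(P)}(\gamma(q)) \simeq \gamma(\Pi_P(q))$ lies in $\gamma(S_p)$ by closure of $S_p$, and then invoke equivalence-invariance to pass back to the original data. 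The main obstacle is precisely this transfer of closedness: it requires representing the various $\infty$-categorical inputs (a morphism $g$, a member $f$ of $S_{\gamma(p)}$, and their composites or dependent products) simultaneously by strict fibrations in $\cC$ with compatibly matched (co)domains. No single step is deep, but the endpoint-matching bookkeeping, combined with careful use of the equivalence-invariance of $S_{\gamma(p)}$, is where the real care is needed.
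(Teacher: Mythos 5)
Your proposal is correct and follows essentially the same route as the paper: represent every morphism of $\cC_{\infty}$ as $\gamma$ of a fibration via the right calculus of fractions, invoke the tribe hypothesis and Theorem \ref{theorem: univalence in a tribe implies univalence in infinity-cat} to get the univalent classifier, and transfer closedness of $S_p$ to $S_{\gamma(p)}$ by lifting compositions and dependent products back to $\cC$ using preservation of pullbacks along fibrations and Proposition \ref{prop:adjoint functors derive adjoint functors unit/counit}. The paper resolves the endpoint-matching for composition exactly as you anticipate, by pulling back one representative fibration along the trivial fibration witnessing the other's presentation.
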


\begin{proof}
     $\cC_{\infty}$ has finite limits by \autocite[Prop. 7.5.6]{cisinski_higher_2019} and is locally cartesian closed by \autocite[Prop. 7.6.16]{cisinski_higher_2019}.

    By \autocite[Cor. 7.2.18 and 7.2.10.4]{cisinski_higher_2019}, using the right calculus of fractions on trivial fibrations any map in $\cC_{\infty}$ is (homotopic to one) of the form $\gamma(g)\gamma(s)^{-1}$ where $s$ is a trivial fibration in $\cC$ and $g$ is any map. Thus, by factoring $g$ into an anodyne map followed by a fibration, a morphism $f$ in $\cC_{\infty}$ is equivalent to the image of the localisation $\gamma$ of some fibration $f'$ in the tribe $\cC$. But since $\cC$ has enough univalent fibrations there is a univalent fibration $p$ in $\cC$ that classifies $f'$ and a closed local class $S_p$. By Theorem \ref{theorem: univalence in a tribe implies univalence in infinity-cat}, $\gamma(p)$ is univalent. Since the localisation preserves pullbacks along fibrations by \autocite[Prop. 7.5.6]{cisinski_higher_2019}, $\gamma(p)$ classifies $\gamma(f')$, hence it classifies the equivalent morphism $f$ which has the same codomain. Moreover, $\gamma(p)$ defines a local class containing $f$.

    It remains to show that $S_{\gamma(p)}$ is closed. Let $g,f \in S_{\gamma(p)}$ be composable. We need to show that any chosen composition $gf \in S_{\gamma(p)}$. By the proof of \autocite[Prop. 7.5.6]{cisinski_higher_2019}, we may assume that $f$ and $g$ are equivalent to morphisms $\gamma(f')$ and $\gamma(g')$ which are images of fibrations that are strict pullbacks of $p$ in $\cC$. In particular, we may assume that $f = \gamma(f')\gamma(s)^{-1}$ and $g = \gamma(f')\gamma(t)^{-1}$ for $s$ and $t$ being homotopy equivalences. Then taking the pullback in $\cC$ of $f'$ along $t$ we obtain a fibration $f''$ that is by pullback pasting a pullback of $p$. Since $S_p$ is closed under composition, $g'f'' \in S_p$ hence $g'f''$ is a pullback of $p$. Then, $\gamma$ sends this pullback to a pullback in of $\gamma(p)$ in $\cC_{\infty}$ and moreover $\gamma(g'f'')$ is equivalent to the composite $gf$, hence $gf \in S_{\gamma(p)}$.

    For the closure under the right adjoint to pullback, we only need to show by factorisation and the proof of \autocite[Prop. 7.6.16]{cisinski_higher_2019}, that for any fibration $g$ and any $f \in S_{\gamma(p)}$, $\Pi_{\gamma(g)}(\gamma(f)) \in S_{\gamma(p)}$. This, however, follows then directly using the diagrams in Proposition \ref{prop:adjoint functors derive adjoint functors unit/counit} together with the fact that $(\cC/\!\!/X)_{\infty} \simeq \cC_{\infty}/X$ which imply that $\Pi_{\gamma(g)}(\gamma(f)) \sim \gamma(\Pi_g(f))$ and using the fact that $S_{p}$ is closed.
\end{proof}

\subsection{Subobject Classifiers in $\infty$-Categories}

We saw that in a $\pi$-tribe with univalent fibrations we can always construct homotopy subobject classifiers. We will now show that the analogous result holds for locally cartesian closed $\infty$-categories with finite limits and univalent morphisms. That is, there is a subobject classifier for every any univalent morphism $p$ classifying the monomorphisms in the local class $S_p$.

This means in particular that we can derive the existence of subobject classifiers from the existence of these univalent morphisms. We will moreover show that the localisation sends a homotopy subobject classifier in a $\pi$-tribe to a subobject classifier in the induced $\infty$-category. This allows us to work with subobject classifiers both at the level of tribes and at the level of $\infty$-categories.

\begin{definition}

    Let $\cCi$ be an $\infty$-category with finite limits. Let $S$ be a local class. There is a functor 
    $$\mathsf{Sub}_S(-)  \colon \cCi^{op} \to \Set$$ 
    that takes each object $x$ to the set $\mathsf{Sub}_S(x)$ of equivalence classes of those monomorphisms with target $x$ that are in $S$, with the equivalence relation of being equivalent over $x$ in $\cCi/x$. Notice that the pullback of a monomorphism in an $\infty$-category is also a monomorphism and so this gives us an actual functor.
\end{definition}

The fact that this functor is valued in $\Set$ rather than $\mathbf{Spc}$ comes from the fact the space of subobjects of an object $x$ in an $\infty$-category is by Lemma \ref{lemma: different characterisations of monomorphism in infty cat} homotopy equivalent to a set with a preorder, so taking equivalence classes does not truncate away any higher information.

\begin{definition}
\label{def: subobject classifier infinity-categories}
    Let $\cCi$ be an $\infty$-category with finite limits.  A monomorphism  $\mathsf{el}(\omega_S) \to \omega_S$ is a \textit{subobject classifier} for a local class $S$ if it represents the functor $\mathsf{Sub}_S(-)$ such that
    $$\mathpzc{Map}_{\cCi_{\infty}}(x, \omega_S) \cong \mathsf{Sub}_S(x)$$
    In particular, $\mathpzc{Map}_{\cCi_{\infty}}(x, \omega_p)$ is a 0-truncated space.
\end{definition}

\begin{lemma}[Cf. Lem. \ref{lemma: characterisation homotopy monomorphisms classified by univalent fibration}]
    \label{lemma: infty cat characterisation of monomorphisms classified by univalent morphism}
    A morphism $f \colon x \to y$ classified by some univalent morphism $p \colon e \to b$ in an $\infty$-category $\cCi$ via a pullback diagram
\[\begin{tikzcd}
	x & e \\
	y & b
	\arrow["{q_f}", from=1-1, to=1-2]
	\arrow["f"', from=1-1, to=2-1]
	\arrow["\lrcorner"{anchor=center, pos=0.125}, draw=none, from=1-1, to=2-2]
	\arrow["p", from=1-2, to=2-2]
	\arrow["{\chi_f}"', from=2-1, to=2-2]
\end{tikzcd}\]
    is a monomorphism if and only if there is a morphism $H$
    \[\begin{tikzcd}
    	{x \times_y x} && e \\
    	& {e\times_b e}
    	\arrow["H", from=1-1, to=1-3]
    	\arrow["{q_f\times_{\chi_f}q_f}"', from=1-1, to=2-2]
    	\arrow["{(id,id)}", from=1-3, to=2-2]
    \end{tikzcd}\]
where $(id,id)$ is the fibered diagonal of $p$ in the slice $\cCi/b$.
    
\end{lemma}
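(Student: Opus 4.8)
The plan is to reduce the whole statement to a single fact about the fibered diagonal $\Delta_p\colon e\to e\times_b e$ of $p$ in $\cCi/b$ and its pullback, running exactly parallel to the tribe argument of Lemma \ref{lemma: characterisation homotopy monomorphisms classified by univalent fibration}, but with the strict path object replaced by the honest diagonal (here the map labelled $(id,id)$ in the target diagram \emph{is} $\Delta_p$). First I would record the base-change compatibility: since $f\simeq \chi_f^*p$ and pullback functors preserve finite limits, $\chi_f^*$ carries $\Delta_p$ to the diagonal $\Delta_f\colon x\to x\times_y x$ of $f$. Concretely this gives a pullback square exhibiting $\Delta_f=(q_f\times_{\chi_f}q_f)^*\Delta_p$, with $x\times_y x\simeq \chi_f^*(e\times_b e)$ and top edge $q_f$, so that $q_f\times_{\chi_f}q_f$ is the base along which $\Delta_p$ is pulled back.

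Next I would translate the existence of the morphism $H$ into the existence of a section of $\Delta_f$. A morphism $H\colon q_f\times_{\chi_f}q_f\to \Delta_p$ in the slice $\cCi/(e\times_b e)$ is, by the adjunction between postcomposition with $q_f\times_{\chi_f}q_f$ and pullback along it, the same datum as a point of $\mathpzc{Map}_{\cCi/(x\times_y x)}(\mathrm{id}_{x\times_y x},(q_f\times_{\chi_f}q_f)^*\Delta_p)=\mathpzc{Map}_{\cCi/(x\times_y x)}(\mathrm{id}_{x\times_y x},\Delta_f)$, i.e.\ a section of $\Delta_f$. (Finite completeness of $\cCi$, available in the relevant context, guarantees the pullback right adjoint exists.) Hence $H$ exists if and only if $\Delta_f$ admits a section.

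The heart of the proof is then the equivalence ``$\Delta_f$ admits a section $\iff$ $f$ is a monomorphism''. By the definition of monomorphism, $f$ is monic precisely when $\Delta_f$ is an equivalence, so one direction is immediate: an equivalence has a section, its section space being contractible by Lemma \ref{lemma: sections of equivalence in infinity category contractible}. For the converse, which I expect to be the only genuinely nontrivial point, I would pass to the homotopy category $h\cCi$ and argue that any splitting is forced to be an honest inverse. Writing $\pi_1,\pi_2\colon x\times_y x\to x$ for the projections, any section $s$ of $\Delta_f$ satisfies $[s]=[\pi_1][\Delta_f][s]=[\pi_1]$ in $h\cCi$, using $\pi_1\Delta_f\simeq \mathrm{id}_x$ and $\Delta_f s\simeq \mathrm{id}_{x\times_y x}$; consequently $[s][\Delta_f]=[\pi_1][\Delta_f]=[\mathrm{id}_x]$, and combined with $[\Delta_f][s]=[\mathrm{id}_{x\times_y x}]$ this makes $[\Delta_f]$ invertible in $h\cCi$, so $\Delta_f$ is an equivalence and $f$ is a monomorphism. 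Chaining the three steps — base change, the adjunction correspondence, and the section-forces-equivalence argument — yields the stated equivalence. I would note in passing that univalence of $p$ is not actually used in this particular argument; it is part of the ambient hypotheses because the lemma is applied to univalent $p$ when constructing subobject classifiers.
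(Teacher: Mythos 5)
Your proposal is correct and follows essentially the same route as the paper's proof: exhibit the fibered diagonal of $f$ as the base change of the fibered diagonal of $p$ along $q_f\times_{\chi_f}q_f$, use the (composition $\dashv$ pullback) adjunction to identify morphisms $H$ with sections of $\Delta_f$, and then observe that a section together with the projection retractions forces $\Delta_f$ to be an equivalence, which is the definition of $f$ being a monomorphism. Your explicit homotopy-category computation and the remark that univalence of $p$ is not used are both accurate refinements of what the paper states more tersely.
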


\begin{proof}
    Consider the pullback diagram
    \[\begin{tikzcd}
    	x & e \\
    	{x \times_y x} & {e\times_b e}
    	\arrow["{q_f}", from=1-1, to=1-2]
    	\arrow["{(id_x,id_x)}"', from=1-1, to=2-1]
    	\arrow["\lrcorner"{anchor=center, pos=0.125}, draw=none, from=1-1, to=2-2]
    	\arrow["{(id_e,id_e)}", from=1-2, to=2-2]
    	\arrow["{q_f \times_{\chi_f} q_f}"', from=2-1, to=2-2]
    \end{tikzcd}\]
    Since the pullback functor $(q_f \times_{\chi_f} q_f)^*$ always has a left adjoint given by composition with $q_f \times_{\chi_f} q_f$, there is an equivalence of spaces
    $$\mathpzc{Map}_{\cCi/e\times_b e}(q_f \times_{\chi_f} q_f,(id_e,id_e)) \simeq \mathpzc{Map}_{\cCi/x\times_x x}(id_{x \times_y x},(id_x,id_x))$$
    between the space of morphisms from $q_f \times_{\chi_f} q_f$ to $(id_e,id_e)$ in the slice $\cCi/e\times_b e$ and section to the pullback $(id_x,id_x) = (q_f \times_{\chi_f} q_f)^*(id_e,id_e)$. Thus, such a morphism $H$ exists if and only if $(id_x,id_x) \colon x \to x \times_y x$ has a section which is the case if and only if $(id_x,id_x)$ is an equivalence, since the projections are always retractions. This in turn holds if and only if
    \[\begin{tikzcd}
        x & x \\
        x & y
        \arrow["id", from=1-1, to=1-2]
        \arrow["id"', from=1-1, to=2-1]
        \arrow["f", from=1-2, to=2-2]
        \arrow["f"', from=2-1, to=2-2]
    \end{tikzcd}\]
    is a pullback which is the case if and only if $f$ is a monomorphism.
    \end{proof}

\begin{construction}[Cf. Constr. \ref{constr: homotopy subobject classifier from univalent fibration}]

    Let $p \colon  e \to b$ be a univalent fibration in a locally cartesian closed and finitely complete $\infty$-category $\cCi$. Let $p \times p \colon e \times_b e \to b$ be the product $p \times p$ in $\cCi/b$. Local cartesian closedness induces a functor $\Pi_{p \times p} \colon \cCi/e\times_b e \to \cCi/b$ that is right adjoint to pullback $(p \times p)^*\colon \cCi/b \to \cCi/e\times_b e$.
    
    Let $(id,id) \colon e \to e \times_b e$ be the diagonal. Define $pr_p \colon \omega_p \to b$ as the morphism
    $$\Pi_{p \times p}((id,id)) \colon \Pi_{p \times p}(e \times_b e) \to b$$
    and let $\top \colon \mathsf{el}(\omega_p) \to \omega_p$ be defined by the pullback
    \[\begin{tikzcd}
    	{\mathsf{el}(\omega_p)} & e \\
    	{\omega_p} & b
    	\arrow[from=1-1, to=1-2]
    	\arrow["\top"', from=1-1, to=2-1]
    	\arrow["\lrcorner"{anchor=center, pos=0.125}, draw=none, from=1-1, to=2-2]
    	\arrow["p", from=1-2, to=2-2]
    	\arrow["{pr_p}"', from=2-1, to=2-2]
    \end{tikzcd}\]
    The morphism $pr_p \colon \omega_p \to b$ has by adjointness the property that there is a an equivalence
    $$\mathpzc{Map}_{\cCi/b}(\chi_f,pr_p) \simeq \mathpzc{Map}_{\cCi/e\times_b e}(q_f \times_{\chi_f} q_f,(id,id))$$
    for any pullback
    \[\begin{tikzcd}
    	x & e \\
    	y & b
    	\arrow["{q_f}", from=1-1, to=1-2]
    	\arrow["f"', from=1-1, to=2-1]
    	\arrow["\lrcorner"{anchor=center, pos=0.125}, draw=none, from=1-1, to=2-2]
    	\arrow["p", from=1-2, to=2-2]
    	\arrow["{\chi_f}"', from=2-1, to=2-2]
    \end{tikzcd}\]
    
\end{construction}

\begin{lemma}[Cf. Lem. \ref{lemma: top is homotopy monomorphism}]
\label{lemma: top is a monomorphism infty cat}

    $\top \colon \mathsf{el}(\omega_p) \to \omega_p$ is a monomorphism.
\end{lemma}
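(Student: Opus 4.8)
The plan is to mirror the proof of Lemma \ref{lemma: top is homotopy monomorphism} from the tribe setting, replacing the tribe-level criterion by its $\infty$-categorical counterpart, Lemma \ref{lemma: infty cat characterisation of monomorphisms classified by univalent morphism}. By construction $\top$ is the pullback of $p$ along $pr_p$, so $\top$ is classified by the univalent morphism $p$ with characteristic map $pr_p \colon \omega_p \to b$. Write $q_\top \colon \mathsf{el}(\omega_p) \to e$ for the top edge of the defining pullback square and $(id,id) \colon e \to e \times_b e$ for the fibered diagonal of $p$ in $\cCi/b$. Then Lemma \ref{lemma: infty cat characterisation of monomorphisms classified by univalent morphism} applies with $f = \top$, $y = \omega_p$, $x = \mathsf{el}(\omega_p)$ and $\chi_f = pr_p$, so that to prove $\top$ is a monomorphism it suffices to exhibit a single morphism $H \colon q_\top \times_{pr_p} q_\top \to (id,id)$ over $e \times_b e$.

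First I would invoke the adjointness property of $pr_p$ recorded just above the lemma, namely the equivalence $\mathpzc{Map}_{\cCi/b}(\chi_f, pr_p) \simeq \mathpzc{Map}_{\cCi/e \times_b e}(q_f \times_{\chi_f} q_f, (id,id))$ coming from the adjunction $(p \times p)^{*} \dashv \Pi_{p \times p}$, and specialise it to $\chi_f = pr_p$ (equivalently $f = \top$). This gives $\mathpzc{Map}_{\cCi/b}(pr_p, pr_p) \simeq \mathpzc{Map}_{\cCi/e \times_b e}(q_\top \times_{pr_p} q_\top, (id,id))$. The left-hand mapping space is nonempty, since it contains the identity $id_{pr_p}$; transporting this point across the equivalence produces the desired morphism $H$. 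By Lemma \ref{lemma: infty cat characterisation of monomorphisms classified by univalent morphism}, the mere existence of such an $H$ is exactly the condition witnessing that $\top$ is a monomorphism, which completes the argument.

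I expect no serious obstacle here. Unlike the tribe version, where one had to upgrade path object factorisations and track explicit homotopies, the $\infty$-categorical argument is purely formal: the adjunction correspondence is already built into the construction of $pr_p$, and the monomorphism criterion of Lemma \ref{lemma: infty cat characterisation of monomorphisms classified by univalent morphism} is phrased directly as the existence of a single map $H$, so one only needs to feed it a point of a nonempty mapping space. The single point requiring care is bookkeeping, namely confirming that the instance $\chi_f = pr_p$ of the correspondence matches the hypotheses of the lemma on the nose, i.e. that $q_\top$ is genuinely the $q_f$ arising from the pullback square defining $\top$; this holds by construction.
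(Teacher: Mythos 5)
Your argument is exactly the paper's proof: both transport the identity $id_{pr_p} \in \mathpzc{Map}_{\cCi/b}(pr_p,pr_p)$ across the adjunction correspondence to obtain the morphism $H$ (called $K$ in the paper) over $e\times_b e$, and then invoke Lemma \ref{lemma: infty cat characterisation of monomorphisms classified by univalent morphism} to conclude that $\top$ is a monomorphism. The bookkeeping point you flag is indeed the only thing to check, and it holds by construction as you say.
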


\begin{proof}

This follows directly using Lemma \ref{lemma: infty cat characterisation of monomorphisms classified by univalent morphism}: The identity fits into the following diagram on the left, inducing the desired morphism $K$ by adjoint transposition on the right which witnesses that the map $\top$ is a  monomorphism.
\[\begin{tikzcd}
	{\omega_p} && {\omega_p} && {\mathsf{el}(\omega_p)\times_{\omega_p} \mathsf{el}(\omega_p)} && e \\
	& b &&&& {e \times_b e}
	\arrow["id", from=1-1, to=1-3]
	\arrow["{pr_p}"', from=1-1, to=2-2]
	\arrow["{pr_p}", from=1-3, to=2-2]
	\arrow["K", from=1-5, to=1-7]
	\arrow[from=1-5, to=2-6]
	\arrow["{(id,id)}", from=1-7, to=2-6]
\end{tikzcd}\]
\end{proof}

Our next lemma is the $\infty$-categorical version of the type theoretic proof of $\mathtt{isProp(isProp(f))}$.

\begin{proposition}[Cf. Prop. \ref{prop: homotopy subobject classifier is a proposition}]

\label{lemma: pr_p is monomorphism}
    The morphism $pr_p \colon  \omega_p \to b$ is a monomorphism.
\end{proposition}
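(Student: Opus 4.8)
The plan is to verify the monomorphism condition through its mapping-space characterisation. By Lemma \ref{lemma: different characterisations of monomorphism in infty cat}, it suffices to show that for every morphism $\chi_f \colon y \to b$ the mapping space $\mathpzc{Map}_{\cCi/b}(\chi_f, pr_p)$ is either empty or contractible. First I would form the pullback $f = \chi_f^* p \colon x \to y$, together with the induced square $q_f \colon x \to e$ over $\chi_f$, and then invoke the adjointness property of $pr_p = \Pi_{p \times p}((id,id))$ recorded in the construction above, which supplies a natural equivalence
$$\mathpzc{Map}_{\cCi/b}(\chi_f, pr_p) \simeq \mathpzc{Map}_{\cCi/e\times_b e}(q_f \times_{\chi_f} q_f, (id,id)).$$

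Next I would reuse the chain of equivalences appearing in the proof of Lemma \ref{lemma: infty cat characterisation of monomorphisms classified by univalent morphism}. Since the pullback functor $(q_f \times_{\chi_f} q_f)^*$ admits a left adjoint given by composition with $q_f \times_{\chi_f} q_f$, and since $(id_x, id_x) \colon x \to x\times_y x$ is the pullback of the fibered diagonal $(id,id)\colon e \to e\times_b e$ along $q_f \times_{\chi_f} q_f$, the right-hand side above is equivalent to
$$\mathpzc{Map}_{\cCi/x\times_y x}(id_{x\times_y x}, (id_x, id_x)),$$
which is exactly the space of sections of the fibered diagonal $(id_x, id_x)$. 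This reduces the whole statement to a dichotomy concerning this section space.

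Finally I would settle the dichotomy directly, which is the $\infty$-categorical analogue of the $\pi$-tribe observation that the relevant fibration is trivial precisely when it admits a section. If the section space is empty there is nothing to prove. If it is nonempty, then $(id_x, id_x)$ admits a section; since the two projections $x\times_y x \to x$ are always retractions of the diagonal, $(id_x, id_x)$ then possesses both a left and a right inverse and is therefore an equivalence. By Lemma \ref{lemma: sections of equivalence in infinity category contractible} the space of sections of an equivalence is contractible, so $\mathpzc{Map}_{\cCi/b}(\chi_f, pr_p)$ is contractible, as required, and $pr_p$ is a monomorphism. I expect the only genuine subtlety to be the bookkeeping: checking that the successive adjunction equivalences are natural and that the object appearing on the right is literally the space of sections of the fibered diagonal. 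The conceptual point worth emphasising is that we need \emph{not} reproduce the tribe-level argument about homotopies between two sections of a trivial fibration, because the empty-or-contractible structure of mapping spaces in Lemma \ref{lemma: different characterisations of monomorphism in infty cat} already encodes the required uniqueness. Note also that univalence of $p$ is not used here; only the local cartesian closedness entering the construction of $pr_p$ and the adjunctions above are needed.
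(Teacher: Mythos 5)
Your proposal is correct and follows essentially the same route as the paper's proof: reduce via Lemma \ref{lemma: different characterisations of monomorphism in infty cat} to the empty-or-contractible dichotomy, transpose along the $\Pi_{p\times p}$ adjunction and the base-change adjunction to identify the mapping space with the space of sections of the fibered diagonal $(id_x,id_x)\colon x \to x\times_y x$, and conclude with Lemma \ref{lemma: sections of equivalence in infinity category contractible}. Your observation that univalence of $p$ plays no role here is also consistent with the paper's argument.
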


\begin{proof}
   By Lemma \ref{lemma: different characterisations of monomorphism in infty cat}, it suffices to show that for any $\chi_f \colon y \to b$
    $$\mathpzc{Map}_{\cCi/B}(\chi_f,pr_p)$$
    is either empty or contractible. Suppose we are given a pullback
        \[\begin{tikzcd}
        	x & e \\
        	y & b
        	\arrow["{q_f}", from=1-1, to=1-2]
        	\arrow["f"', from=1-1, to=2-1]
        	\arrow["\lrcorner"{anchor=center, pos=0.125}, draw=none, from=1-1, to=2-2]
        	\arrow["p", from=1-2, to=2-2]
        	\arrow["{\chi_f}"', from=2-1, to=2-2]
        \end{tikzcd}\]
    By adjointness
    $$\mathpzc{Map}_{\cCi/b}(\chi_f,pr_p) \simeq \mathpzc{Map}_{\cCi/e\times_b e}(q_f \times_{\chi_f} q_f,(id_e,id_e)).$$
    But as in the proof of Lemma \ref{lemma: infty cat characterisation of monomorphisms classified by univalent morphism} we also have
    $$\mathpzc{Map}_{\cCi/e\times_b e}(q_f \times_{\chi_f} q_f,(id_e,id_e))) \simeq \mathpzc{Map}_{\cCi/x\times_x x}(id_{x \times_y x},(id_x,id_x))) $$
    where the latter space is the space of sections of  $(id_x,id_x) \colon x \to x \times_y x$. Since this map always has retractions given by the projections, if $\mathpzc{Map}_{\cCi/x\times_x x}(id_{x \times_y x},(id_x,id_x))$ is non-empty, then $(id_x,id_x)$ is an equivalence, hence by Lemma \ref{lemma: different characterisations of monomorphism in infty cat}, this space of sections is contractible. Thus, $\mathpzc{Map}_{\cCi/b}(\chi_f,pr_p)$ is either empty or contractible.
    \end{proof}
\begin{proposition}
\label{proposition: top is univalent infty cat}
    
    $\top \colon \mathsf{el}(\omega_p) \to \omega_p$ is univalent.
\end{proposition}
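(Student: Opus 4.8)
The plan is to mirror exactly the tribe-level argument used in Theorem \ref{prop: constructed top is homotopy subobject classifier}, transporting it across the dictionary established in this section. All the substantial work has in fact already been done: the only two facts needed are that $pr_p$ is a monomorphism and that univalence is reflected by pullback along monomorphisms, both of which are available as earlier results.

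First I would recall that, by Construction, the morphism $\top \colon \mathsf{el}(\omega_p) \to \omega_p$ is defined precisely as the pullback of the univalent morphism $p \colon e \to b$ along $pr_p \colon \omega_p \to b$, so we have the pullback square
\[\begin{tikzcd}
	{\mathsf{el}(\omega_p)} & e \\
	{\omega_p} & b
	\arrow[from=1-1, to=1-2]
	\arrow["\top"', from=1-1, to=2-1]
	\arrow["\lrcorner"{anchor=center, pos=0.125}, draw=none, from=1-1, to=2-2]
	\arrow["p", from=1-2, to=2-2]
	\arrow["{pr_p}"', from=2-1, to=2-2]
\end{tikzcd}\]
exhibiting $\top$ as a member of the local class $S_p$. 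Next I would invoke Proposition \ref{lemma: pr_p is monomorphism}, which establishes that $pr_p \colon \omega_p \to b$ is a monomorphism in $\cCi$.

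Finally, since $p$ is univalent by hypothesis and $\top$ is obtained as the pullback of $p$ along the monomorphism $pr_p$, Proposition \ref{prop: pullback of univalent along monomorphism in infty cat} applies directly and yields that $\top$ is univalent. This completes the argument. There is no genuine obstacle at this stage: the difficulty was entirely concentrated in Proposition \ref{lemma: pr_p is monomorphism} (that $pr_p$ is $(-1)$-truncated, which relied on the contractibility of section spaces of equivalences from Lemma \ref{lemma: sections of equivalence in infinity category contractible}) and in the reflection half of Proposition \ref{prop: pullback of univalent along monomorphism in infty cat}, both of which we are entitled to assume here. The present statement is therefore a short corollary assembling those inputs, exactly parallel to the final paragraph of the proof of Theorem \ref{prop: constructed top is homotopy subobject classifier} in the tribe setting.
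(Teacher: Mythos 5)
Your proposal is correct and is essentially identical to the paper's own proof: both observe that $\top$ is by construction the pullback of the univalent morphism $p$ along $pr_p$, cite Proposition \ref{lemma: pr_p is monomorphism} for $pr_p$ being a monomorphism, and conclude by Proposition \ref{prop: pullback of univalent along monomorphism in infty cat}. No further comment is needed.
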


\begin{proof}
    By Lemma \ref{lemma: pr_p is monomorphism}, $\top$ is the pullback of a univalent morphism along a monomorphism. Thus $\top$ is univalent by Proposition \ref{prop: pullback of univalent along monomorphism in infty cat}.
\end{proof}

\begin{definition}
    Let $p$ be a univalent morphism in an $\infty$-category $\cCi$. Denote as $S_{p}^{\mathsf{mon}}$ the local subclass of the local class $S_p$ consisting only of monomorphisms.
\end{definition}

\begin{lemma}
\label{lemma: groupoid of monos is 0-truncated}
    Let $\cCi$ be an $\infty$-category and $x$ be an object in $\cCi$. The space $(\cCi/x)^{\mathsf{gpd}, S_{p}^{\mathsf{mon}}}$ is 0-truncated. In particular, $(\cCi/x)^{\mathsf{gpd}, S_{p}^{\mathsf{mon}}} \cong  \mathsf{Sub}_{S_p}(x)$ as sets.
\end{lemma}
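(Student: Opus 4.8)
The plan is to show directly that every mapping space of the groupoid $(\cCi/x)^{\mathsf{gpd}, S_{p}^{\mathsf{mon}}}$ is either empty or contractible, which is exactly the condition for a space (i.e. an $\infty$-groupoid) to be $0$-truncated: all homotopy groups $\pi_n$ with $n \geq 1$ vanish and each connected component is contractible.

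First I would fix two objects $f \colon a \to x$ and $f' \colon a' \to x$ of $(\cCi/x)^{\mathsf{gpd}, S_{p}^{\mathsf{mon}}}$; by definition these are monomorphisms lying in $S_p$ with codomain $x$. Since we are working inside the \emph{maximal} subgroupoid of $\cCi/x$, the mapping space $\mathpzc{Map}_{(\cCi/x)^{\mathsf{gpd}, S_{p}^{\mathsf{mon}}}}(f,f')$ is the subspace of $\mathpzc{Map}_{\cCi/x}(f,f')$ consisting of the equivalences, and since being an equivalence is a property detected on $\pi_0$, this subspace is a union of connected components of $\mathpzc{Map}_{\cCi/x}(f,f')$. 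Now I would apply Lemma \ref{lemma: different characterisations of monomorphism in infty cat} to the monomorphism $f'$, taking the test morphism to be $f$: this yields that $\mathpzc{Map}_{\cCi/x}(f,f')$ is itself either empty or contractible. A union of components of an empty or contractible space is again empty or contractible; if the space is empty there is nothing to check, and if it is contractible it has a single component, which either consists entirely of equivalences or contains none. Hence $\mathpzc{Map}_{(\cCi/x)^{\mathsf{gpd}, S_{p}^{\mathsf{mon}}}}(f,f')$ is empty or contractible.

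Taking $f = f'$ shows in particular that the automorphism space of any object is contractible (it contains the identity, hence is nonempty and therefore contractible), so the groupoid has trivial higher homotopy and is $0$-truncated. For the final identification, I would observe that a $0$-truncated space is canonically equivalent to the discrete space on its set of connected components $\pi_0$. By construction the objects of $(\cCi/x)^{\mathsf{gpd}, S_{p}^{\mathsf{mon}}}$ are precisely the monomorphisms in $S_p$ with target $x$, and two of them lie in the same component if and only if they are joined by an equivalence in $\cCi/x$, i.e. are equivalent over $x$; this is exactly the equivalence relation defining $\mathsf{Sub}_{S_p}(x)$. Therefore $\pi_0\big((\cCi/x)^{\mathsf{gpd}, S_{p}^{\mathsf{mon}}}\big) \cong \mathsf{Sub}_{S_p}(x)$, giving the desired isomorphism of sets.

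I expect the only delicate point to be the passage from the ambient mapping space to the space of equivalences in the maximal subgroupoid: one must be careful that restricting to equivalences selects a union of path components rather than an arbitrary subspace, so that contractibility (or emptiness) is preserved. Everything else is a direct application of the monomorphism criterion of Lemma \ref{lemma: different characterisations of monomorphism in infty cat} together with the standard characterisation of $0$-truncatedness in terms of mapping spaces.
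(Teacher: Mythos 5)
Your proof is correct and follows essentially the same route as the paper: both arguments reduce $0$-truncatedness to showing each mapping space in the groupoid is empty or contractible, and both obtain this by applying Lemma \ref{lemma: different characterisations of monomorphism in infty cat} to the target monomorphism. Your additional care about the passage to the maximal subgroupoid (the space of equivalences being a union of path components of the full mapping space) and the explicit identification of $\pi_0$ with $\mathsf{Sub}_{S_p}(x)$ are details the paper leaves implicit, but they are consistent with its argument.
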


\begin{proof}
    Let $f \colon  a \to x$ and $g \colon  b \to x$ be objects in $(\cCi/x)^{\mathsf{gpd}, S_{p}^{\mathsf{mon}}}$. We need to show that $\mathpzc{Map}_{(\cCi/x)^{\mathsf{gpd}}}(f,g)$ is either empty or contractible. Since $g$ is a monomorphism, this follows from Lemma \ref{lemma: different characterisations of monomorphism in infty cat}.
\end{proof}

\begin{lemma}
    
\label{lemma: monomorphisms of univalent morphisms are those of subobject classifier}
    The class $S_{p}^{\mathsf{mon}}$ of monomorphisms obtained as pullbacks of $p$ is precisely the local bounded class $S_{\top}$ of morphisms obtained as pullbacks of $\top$.
\end{lemma}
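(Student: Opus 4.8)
The plan is to prove the equality of the two classes by establishing the inclusions $S_\top \subseteq S_p^{\mathsf{mon}}$ and $S_p^{\mathsf{mon}} \subseteq S_\top$ separately. Throughout I would use the three facts already available from the construction of $\top$: that $\top$ is by definition the pullback $pr_p^* p$ of $p$ along $pr_p \colon \omega_p \to b$, that $\top$ is a monomorphism (Lemma \ref{lemma: top is a monomorphism infty cat}), that $pr_p$ is a monomorphism (Proposition \ref{lemma: pr_p is monomorphism}), together with the characterisation of classified monomorphisms from Lemma \ref{lemma: infty cat characterisation of monomorphisms classified by univalent morphism}.

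For the inclusion $S_\top \subseteq S_p^{\mathsf{mon}}$, I would take an arbitrary $g \in S_\top$, that is, a pullback of $\top$ along some morphism. Since monomorphisms are stable under pullback in an $\infty$-category and $\top$ is a monomorphism, $g$ is itself a monomorphism. Moreover $\top$ is a pullback of $p$ along $pr_p$, so by the pasting lemma for pullbacks $g$ is also a pullback of $p$. Hence $g \in S_p$, and being a monomorphism, $g \in S_p^{\mathsf{mon}}$.

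For the reverse inclusion $S_p^{\mathsf{mon}} \subseteq S_\top$, let $f \colon x \to y$ be a monomorphism that is a pullback of $p$ along a name $\chi_f \colon y \to b$. Since $f$ is a monomorphism classified by the univalent morphism $p$, Lemma \ref{lemma: infty cat characterisation of monomorphisms classified by univalent morphism} produces a morphism $H \colon x \times_y x \to e$ from $q_f \times_{\chi_f} q_f$ to the fibered diagonal $(id,id)$ in the slice $\cCi/(e\times_b e)$. By the adjunction $(p\times p)^* \dashv \Pi_{p\times p}$ defining $pr_p = \Pi_{p \times p}((id,id))$, this $H$ transposes to a morphism $\overline{H} \in \mathpzc{Map}_{\cCi/b}(\chi_f, pr_p)$, that is, a map $\overline{H}\colon y \to \omega_p$ over $b$ with $pr_p \circ \overline{H} \simeq \chi_f$. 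Pullback pasting then gives
\[
f \;\simeq\; \chi_f^* p \;\simeq\; (pr_p \circ \overline{H})^* p \;\simeq\; \overline{H}^*(pr_p^* p) \;=\; \overline{H}^* \top,
\]
so $f$ is a pullback of $\top$ along $\overline{H}$, whence $f \in S_\top$. Combining the two inclusions yields $S_p^{\mathsf{mon}} = S_\top$.

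The step I expect to require the most care is the second inclusion: one must check that the adjoint transpose $\overline{H}$ genuinely witnesses $\chi_f$ as a composite $pr_p \circ \overline{H}$ in the homotopy-coherent sense, so that the displayed chain of equivalences is an honest sequence of pullback squares in $\cCi$ rather than a purely $1$-categorical manipulation. This is exactly where the $\infty$-categorical pasting lemma and the naturality of the $(p\times p)^* \dashv \Pi_{p\times p}$ adjunction in the slice $\cCi/b$ must be invoked; once these are in place, the remainder of the argument is formal.
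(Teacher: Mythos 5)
Your proof is correct and follows essentially the same route as the paper: both directions use that $\top$ is a monomorphism and a pullback of $p$ along $pr_p$ together with pullback pasting, and the converse direction uses Lemma \ref{lemma: infty cat characterisation of monomorphisms classified by univalent morphism} and adjoint transposition along $(p\times p)^* \dashv \Pi_{p\times p}$ to produce the classifying map $\overline{H}$ into $\omega_p$. The coherence issue you flag at the end is exactly what the paper addresses by constructing the explicit cone and choosing a section of the trivial Kan fibration from the definition of limit before applying the pasting lemma.
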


\begin{proof}
    It is clear that any pullback of $\top$ is a pullback of $p$ by the pullback pasting lemma for $\infty$-categories and it is a monomorphism since $\top$ is a mono by Lemma \ref{lemma: top is a monomorphism infty cat} and monomorphisms are stable under pullback. Conversely, let $f\colon x \to y$ be any monomorphism that is a pullback of $p$ given by
        \[\begin{tikzcd}
        	x & e \\
        	y & b
        	\arrow["{q_f}", from=1-1, to=1-2]
        	\arrow["f"', from=1-1, to=2-1]
        	\arrow["\lrcorner"{anchor=center, pos=0.125}, draw=none, from=1-1, to=2-2]
        	\arrow["p", from=1-2, to=2-2]
        	\arrow["{\chi_f}"', from=2-1, to=2-2]
        \end{tikzcd}\]
    Since $f$ is a monomorphism classified by $p$, by Lemma \ref{lemma: infty cat characterisation of monomorphisms classified by univalent morphism}, there is $H$ as in 
        \[\begin{tikzcd}
        	{x \times_y x} && e \\
        	& {e\times_b e}
        	\arrow["H", from=1-1, to=1-3]
        	\arrow["{q_f\times_{\chi_f}q_f}"', from=1-1, to=2-2]
        	\arrow["{(id,id)}", from=1-3, to=2-2]
        \end{tikzcd}\]
which by adjoint transposition corresponds to $\overline{H}$ fitting into a commutative diagram inducing a cone as in 
\[\begin{tikzcd}
	x & {\mathsf{el}(\omega_p)} & e \\
	y & {\omega_p} & b
	\arrow[dashed, from=1-1, to=1-2]
	\arrow["{q_f}", curve={height=-18pt}, from=1-1, to=1-3]
	\arrow["f"', from=1-1, to=2-1]
	\arrow["\lrcorner"{anchor=center, pos=0.125}, draw=none, from=1-1, to=2-2]
	\arrow[from=1-2, to=1-3]
	\arrow["\top"', from=1-2, to=2-2]
	\arrow["\lrcorner"{anchor=center, pos=0.125}, draw=none, from=1-2, to=2-3]
	\arrow["p", from=1-3, to=2-3]
	\arrow["{\overline{H}}", from=2-1, to=2-2]
	\arrow["{\chi_f}"', curve={height=18pt}, from=2-1, to=2-3]
	\arrow["{pr_p}", from=2-2, to=2-3]
\end{tikzcd}\]
then using our definition of limit and choosing a section to the trivial Kan fibration from the definition of limit, we get the induced dashed arrow between the cones. Then, the left square is a pullback by the pullback pasting lemma for $\infty$-categories.
\end{proof}

\begin{proposition}
\label{prop: top is a subobject classifier for infty cat}
    $\top \colon \mathsf{el}(\omega_p) \to \omega_p$ is a subobject classifier for the local class $S_p$, i.e.
    it represents the functor $\mathsf{Sub}_{S_p}(-)$ such that for any object $x$
    $$\mathpzc{Map}_{\cCi_{\infty}}(x, \omega_p) \cong \mathsf{Sub}_{S_p}(x).$$
\end{proposition}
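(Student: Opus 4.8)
The plan is to assemble the statement directly from the preceding results, the crucial input being that $\top$ is itself a univalent morphism. First I would invoke Proposition \ref{proposition: top is univalent infty cat}, which establishes that $\top \colon \mathsf{el}(\omega_p) \to \omega_p$ is univalent. This is the key step, because it permits applying the characterisation of univalence in Proposition \ref{prop: different characterisations of univalence in infty-cat} with $\top$ playing the role of $p$. That proposition then yields that the comparison morphism $\kappa_\top \colon \cCi/\omega_p \to \cO_{\cCi}^{(S_\top)}$ is an equivalence of right fibrations over $\cCi$, and hence that on fibers over any object $x$ the pullback map
$$\mathpzc{Map}_{\cCi}(x,\omega_p) \to (\cCi/x)^{\mathsf{gpd}, S_\top}$$
is an equivalence of spaces, natural in $x$.

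Next I would identify the target of this equivalence with the subobject functor. By Lemma \ref{lemma: monomorphisms of univalent morphisms are those of subobject classifier}, the local bounded class $S_\top$ of pullbacks of $\top$ coincides with the class $S_p^{\mathsf{mon}}$ of monomorphisms in $S_p$, so that $(\cCi/x)^{\mathsf{gpd}, S_\top} = (\cCi/x)^{\mathsf{gpd}, S_p^{\mathsf{mon}}}$. Then by Lemma \ref{lemma: groupoid of monos is 0-truncated} this $\infty$-groupoid is $0$-truncated and is isomorphic as a set to $\mathsf{Sub}_{S_p}(x)$. Chaining these identifications gives
$$\mathpzc{Map}_{\cCi}(x,\omega_p) \simeq (\cCi/x)^{\mathsf{gpd}, S_p^{\mathsf{mon}}} \cong \mathsf{Sub}_{S_p}(x),$$
and since the right-hand side is a set, $\mathpzc{Map}_{\cCi}(x,\omega_p)$ is $0$-truncated, so the equivalence collapses to a bijection $\mathpzc{Map}_{\cCi}(x,\omega_p) \cong \mathsf{Sub}_{S_p}(x)$.

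Finally I would verify that all the identifications in this chain are natural in $x$, so that they assemble into a natural isomorphism of $\Set$-valued functors, exhibiting $\omega_p$ as representing $\mathsf{Sub}_{S_p}(-)$ with universal monomorphism $\top$. Since the equivalence of right fibrations $\kappa_\top$ is by construction natural over $\cCi$, and both the identity $S_\top = S_p^{\mathsf{mon}}$ and the set-level description of the groupoid of monomorphisms are manifestly compatible with pullback, naturality should be automatic. In truth the real difficulty of this theorem is not in the present assembly but is concentrated in the three preceding results---that $\top$ is univalent, that $S_\top = S_p^{\mathsf{mon}}$, and that the relevant groupoid is discrete; once those are in hand the statement follows formally, the only point deserving genuine care being the check that every equivalence in the chain respects the functorial structure in $x$ so that representability, and not merely a pointwise bijection, is obtained.
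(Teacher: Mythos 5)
Your proposal is correct and follows essentially the same route as the paper's proof: it combines Proposition \ref{proposition: top is univalent infty cat}, the characterisation of univalence in Proposition \ref{prop: different characterisations of univalence in infty-cat}~(2), Lemma \ref{lemma: monomorphisms of univalent morphisms are those of subobject classifier}, and Lemma \ref{lemma: groupoid of monos is 0-truncated} in the same way. Your additional attention to naturality in $x$ is a welcome (if routine) point that the paper leaves implicit.
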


\begin{proof}
    
    By Lemma \ref{lemma: monomorphisms of univalent morphisms are those of subobject classifier},  $S_{p}^{\mathsf{mon}}$ is precisely the local bounded class $S_{\top}$ of morphisms obtained as pullbacks of $\top$. Thus, $(\cCi/x)^{\mathsf{gpd}, S_{\top}} \cong \mathsf{Sub}_{S_p}(x)$ by Lemma \ref{lemma: groupoid of monos is 0-truncated}. By Proposition \ref{proposition: top is univalent infty cat}, $\top$ is univalent.  Thus, by Proposition \ref{prop: different characterisations of univalence in infty-cat} (2) we get 
    \[\pushQED{\qed} 
    \mathpzc{Map}(x,\omega_p) \simeq (\cCi/x)^{\mathsf{gpd}, S_{\top}} \cong \mathsf{Sub}_{S_p}(x).\qedhere
    \popQED\]
    \renewcommand{\qedsymbol}{}
\end{proof}

\begin{corollary}
\label{cor: every infty cat finitely complete lcc and univalent has subobject classifier}

    Every finitely complete, locally cartesian closed $\infty$-category has a subobject classifier $$\top \colon \mathsf{el}(\omega_p) \to \omega_p$$ for every univalent morphism $p$.
\qed\end{corollary}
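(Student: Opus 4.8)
The plan is to observe that this corollary is the packaging of the entire preceding subsection, so the proof amounts to invoking the Construction of $\top \colon \mathsf{el}(\omega_p) \to \omega_p$ together with Proposition \ref{prop: top is a subobject classifier for infty cat}. Given a finitely complete, locally cartesian closed $\infty$-category $\cCi$ and any univalent morphism $p \colon e \to b$, I would first note that the two standing hypotheses are precisely what the Construction (Cf. Constr.\ \ref{constr: homotopy subobject classifier from univalent fibration}) consumes: finite completeness supplies the product $p \times p \colon e \times_b e \to b$ in $\cCi/b$ and its diagonal $(id,id) \colon e \to e \times_b e$, while local cartesian closedness supplies the right adjoint $\Pi_{p \times p} \colon \cCi/e\times_b e \to \cCi/b$ needed to set $pr_p := \Pi_{p\times p}((id,id))$ and thence $\top$ as the pullback of $p$ along $pr_p$. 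Hence the candidate classifier exists for every such $p$.

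With $\top$ in hand, I would then simply cite Proposition \ref{prop: top is a subobject classifier for infty cat}, which asserts exactly that $\top$ represents $\mathsf{Sub}_{S_p}(-)$ via $\mathpzc{Map}_{\cCi}(x,\omega_p) \cong \mathsf{Sub}_{S_p}(x)$. Since that proposition is stated for an arbitrary univalent $p$ in a finitely complete, locally cartesian closed $\infty$-category, no further hypotheses are needed and the corollary follows at once.

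There is, strictly speaking, no genuine obstacle remaining at this stage: all the real work has already been carried out upstream. For the record, the substance that Proposition \ref{prop: top is a subobject classifier for infty cat} rests on is (i) that $\top$ is a monomorphism (Lemma \ref{lemma: top is a monomorphism infty cat}); (ii) that $pr_p$ is a monomorphism (Proposition \ref{lemma: pr_p is monomorphism}), which forces $\top$ to be univalent by Proposition \ref{prop: pullback of univalent along monomorphism in infty cat} since $\top = pr_p^*p$; and (iii) the identification $S_p^{\mathsf{mon}} = S_\top$ of Lemma \ref{lemma: monomorphisms of univalent morphisms are those of subobject classifier} combined with the $0$-truncatedness of $(\cCi/x)^{\mathsf{gpd},S_\top}$ from Lemma \ref{lemma: groupoid of monos is 0-truncated}, which together upgrade the equivalence $\mathpzc{Map}(x,\omega_p) \simeq (\cCi/x)^{\mathsf{gpd},S_\top}$ of Proposition \ref{prop: different characterisations of univalence in infty-cat}(2) into the desired set-level bijection with $\mathsf{Sub}_{S_p}(x)$. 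The corollary then records the existence of such a classifier for each univalent morphism and requires nothing beyond citing these established results.
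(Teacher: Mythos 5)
Your proposal is correct and matches the paper exactly: the corollary carries an immediate \verb|\qed| precisely because, as you say, the Construction produces $\top$ for any univalent $p$ under the two standing hypotheses, and Proposition \ref{prop: top is a subobject classifier for infty cat} then asserts that it classifies $\mathsf{Sub}_{S_p}(-)$. Your accounting of the upstream lemmas is also accurate; nothing further is needed.
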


Finally, as one would expect, a homotopy subobject classifier in a tribe becomes a subobject classifier in the $\infty$-localisation.

\begin{proposition}

    \label{prop: infty localisation of tribe with subobject classifier shas subobject classifiers}
    Let $\cC$ be a tribe and let $\top \colon \mathsf{El}(\Omega_p) \to \Omega_p$  be a homotopy subobject classifier for a local class $S_p$ induced by a fibration $p \colon E \to B$ in $\cC$ as in Theorem \ref{prop: constructed top is homotopy subobject classifier}. Let $\gamma \colon \cC \to \cC_{\infty}$ be the localisation functor. Then $\gamma(\top)$ is a subobject classifier for $S_{\gamma(p)}$ in $\cC_{\infty}$.
    
\end{proposition}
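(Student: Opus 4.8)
The plan is to show that $\gamma(\top)$ agrees, up to equivalence, with the subobject classifier that the construction preceding Lemma \ref{lemma: top is a monomorphism infty cat} produces out of the univalent morphism $\gamma(p)$, and then to quote Proposition \ref{prop: top is a subobject classifier for infty cat}. Since the existence of the homotopy subobject classifier $\top$ presupposes the partial local cartesian closure used in Construction \ref{constr: homotopy subobject classifier from univalent fibration}, we may assume $\cC$ is a $\pi$-tribe; then $\cC_{\infty}$ is finitely complete and locally cartesian closed by \autocite[Prop. 7.5.6, 7.6.16]{cisinski_higher_2019}, so the $\infty$-categorical construction applies. Write $\top_{\gamma(p)} \colon \mathsf{el}(\omega_{\gamma(p)}) \to \omega_{\gamma(p)}$ for the classifier it assigns to $\gamma(p)$; by Proposition \ref{prop: top is a subobject classifier for infty cat} this is a subobject classifier for $S_{\gamma(p)}$. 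Hence it suffices to prove $\gamma(\top) \simeq \top_{\gamma(p)}$.

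First I would record that $\gamma(p)$ is univalent, which is exactly Theorem \ref{theorem: univalence in a tribe implies univalence in infinity-cat}. Next I assemble the three commutation properties of $\gamma$ that the two constructions need. (i) The localisation preserves pullbacks along fibrations \autocite[Prop. 7.5.6]{cisinski_higher_2019}; in particular $\gamma(E\times_B E) \simeq \gamma(E)\times_{\gamma(B)}\gamma(E)$, so the fibration $p\times p$ of $\cC/\!\!/B$ is sent to the product $\gamma(p)\times\gamma(p)$ in $\cC_{\infty}/\gamma(B)$. (ii) The localisation commutes with the partial dependent product: exactly as in the proof of Corollary \ref{thm: tribe enough universal families then localisation enough univalent morphisms}, the diagrams of Proposition \ref{prop:adjoint functors derive adjoint functors unit/counit} together with $(\cC/\!\!/X)_{\infty}\simeq\cC_{\infty}/X$ \autocite[Cor. 7.6.13]{cisinski_higher_2019} give $\gamma(\Pi_{p\times p}(h)) \simeq \Pi_{\gamma(p)\times\gamma(p)}(\gamma(h))$ for any fibration $h$ over $E\times_B E$. (iii) The fibered path object $E \xrightarrow{\sim} P_p \xrightarrow{(\partial_0,\partial_1)} E\times_B E$ localises to the fibered diagonal of $\gamma(p)$: the anodyne map $E \to P_p$ is a weak equivalence, hence an equivalence $\gamma(E)\simeq\gamma(P_p)$ in $\cC_{\infty}$, and since $\gamma$ preserves the pullback $E\times_B E$ and since the composite $(\partial_0,\partial_1)\circ(E\to P_p)$ is the fibered diagonal, $\gamma(\partial_0,\partial_1)$ is identified with $(id,id)\colon \gamma(E)\to\gamma(E)\times_{\gamma(B)}\gamma(E)$.

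Combining (i)--(iii), I compute
$$\gamma(\Omega_p) = \gamma\bigl(\Pi_{p\times p}(\partial_0,\partial_1)\bigr) \simeq \Pi_{\gamma(p)\times\gamma(p)}\bigl(\gamma(\partial_0,\partial_1)\bigr) \simeq \Pi_{\gamma(p)\times\gamma(p)}\bigl((id,id)\bigr) = \omega_{\gamma(p)},$$
and under this equivalence $\gamma(Pr_p) \simeq pr_{\gamma(p)}$ over $\gamma(B)$. Because $\top$ is defined as the pullback of $p$ along $Pr_p$ and $\gamma$ preserves pullbacks along the fibration $p$, it follows that $\gamma(\top)$ is the pullback of $\gamma(p)$ along $pr_{\gamma(p)}$, i.e. $\gamma(\top)\simeq\top_{\gamma(p)}$. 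By Proposition \ref{prop: top is a subobject classifier for infty cat} the right-hand side is a subobject classifier for $S_{\gamma(p)}$, and therefore so is $\gamma(\top)$, as required.

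The main obstacle is the interplay of steps (ii) and (iii): one must check that $\Pi_{p\times p}$ is applied to a genuine fibration (it is, since $(\partial_0,\partial_1)$ and $p\times p$ are fibrations, so both live in the fibrant slices where $\Pi$ is defined and preserves the relevant structure), and, more delicately, that the equivalence $\gamma(P_p)\simeq\gamma(E)$ really carries $\gamma(\partial_0,\partial_1)$ to the fibered diagonal \emph{as a morphism over} $\gamma(E)\times_{\gamma(B)}\gamma(E)$, so that applying the $\Pi$ of (ii) to it yields $pr_{\gamma(p)}$ and not merely an abstractly equivalent object. This is precisely the point at which the naturality encoded in Proposition \ref{prop:adjoint functors derive adjoint functors unit/counit} and the slice equivalence $(\cC/\!\!/X)_{\infty}\simeq\cC_{\infty}/X$ must be invoked with care.
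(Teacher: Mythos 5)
Your proposal is correct and follows essentially the same route as the paper's proof: both establish $\gamma(\Omega_p)\simeq\omega_{\gamma(p)}$ by combining the facts that the fibered path object localises to the fibered diagonal, that $\gamma$ commutes up to homotopy with the right adjoint to pullback (Proposition \ref{prop:adjoint functors derive adjoint functors unit/counit}), and that $\gamma$ preserves pullbacks along fibrations, and then conclude via the $\infty$-categorical classifier of Proposition \ref{prop: top is a subobject classifier for infty cat}. Your write-up is simply a more detailed version of the paper's argument, with the same ingredients made explicit.
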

\begin{proof}
    Since the localisation sends a path object $P_p \to E \times_B E $ to a morphism equivalent to the diagonal $E \to E \times_B E$ and the localisation commutes up to homotopy with the right adjoint to pullback by Proposition \ref{prop:adjoint functors derive adjoint functors unit/counit}, we get that $\gamma(\Omega_p) \simeq \omega_{\gamma(p)}$. Since it also preserves pullbacks along fibrations $\gamma(\top)$ is a subobject classifier for $\gamma(p)$ in the $\infty$-category $\cC_{\infty}$.
\end{proof}

\subsection{Finite Colimits}
\label{sect: finite colimits}

\begin{proposition}[{\cite[Prop. 5.1.3.2 and Cor. 5.1.2.3]{lurie_higher_2009}}]
\label{proposition: colimit in infty-category}
    Let $d\colon \mathcal{I} \to\cCi$ be a diagram in an $\infty$-category $\cCi$. An object $x$ together with a natual transformation $d \Rightarrow \operatorname{const}_x$ is a \textit{colimit} for $d$ if and only if for all objects $x$, the map $$\Map(z,x) \to \lim_i\Map(d_i, x)$$ is an equivalence of spaces.
\qed\end{proposition}

\begin{lemma}\label{lemma:relation internal colimit and colimit in infty-cat}
    Let $d\colon \mathcal{I} \to\cCi$ be a diagram in a cartesian closed $\infty$-category and let $d \Rightarrow \operatorname{const}_z$ be a natural transformation. Suppose that $\cCi$ has limits of shape $\mathcal{I}$. Then, the canonical map $$\uMap(z,x) \to \lim_i\uMap(d_i,x)$$ is an equivalence in $\cCi$ if and only if $z$ is a colimit of $d$.
\end{lemma}

\begin{proof}
    This follows directly from $\Map(1,\uMap(-,x))\simeq\Map(-,x)$ and the fact that $\Map(1,-)$ preserves limits and that $\Map(-,x)$ sends colimits to limits.
\end{proof}

\begin{proposition}
\label{prop: localisation of internal homotopy pushout is homotopy pushout}
    Let $\cC$ be a $\pi$-tribe. Let $Q$ be an internal homotopy pushout of a diagram $A \xleftarrow{f} C \xrightarrow{g} B$ with the data $i_A, i_B$ and $H \colon i_A f \simeq i_Bg$. Then $Q$ is a pushout of $A \xleftarrow{\gamma (f)} C \xrightarrow{\gamma (g)} B$ in the localisation $\gamma\colon \cC \to \cC_{\infty}$.
\end{proposition}

\begin{proof}
    The localisation commutes with internal homs and preserves pullbacks along fibrations. Moreover, it sends path objects to morphisms equivalent to the diagonal. Thus, $$\uMap_{\cC_{\infty}}(Q,X) \simeq \gamma(\Cocone(f,g,X)) \simeq \uMap_{\cC_{\infty}}(A,X) \times_{\uMap_{\cC_{\infty}}(C,X)} \uMap_{\cC_{\infty}}(B,X)$$ for all objects $X$. So the claim follows from Lemma \ref{lemma:relation internal colimit and colimit in infty-cat}.
\end{proof}

\begin{theorem}
\label{thm: localisation of pi-tribe with internal homotopy initial object and internal homotopy pushouts is cocomplete}
    Let $\cC$ be a $\pi$-tribe with an internal homotopy initial object and internal homotopy pushouts. Then $\cC_{\infty}$ is finitely cocomplete.
\end{theorem}

\begin{proof}
    By \cite[Cor. 4.4.2.4]{lurie_higher_2009}, it suffices to show that $\cC_{\infty}$ has an initial object and pushouts.
    
    Let $I$ be the homotopy initial object of $\cC$. Since $\uHom(I,Z)$ is contractible for any object $Z$ and since the localisation commutes with the internal hom-functors, we get that $\uMap(I,Z) \simeq 1$. Since the limit of the empty diagram is the terminal object, we get by Lemma \ref{lemma:relation internal colimit and colimit in infty-cat} that $I$ is an initial object.

    Next, let $A \xleftarrow{f} C \xrightarrow{g} B$  be a span in $\cC_{\infty}$. By formula \cite[\;7.2.10.4]{cisinski_higher_2019}, we may assume that this span is the image of the localistaion of a zig-zag $$A \xleftarrow{} C' \xrightarrow{\simeq}  C \xleftarrow{\simeq} C'' \xrightarrow{} B$$
    where the two homotopy equivalences are in fact trivial fibrations. Choosing sections we obtain a span $A \xleftarrow{f'} C \xrightarrow{g'} B$ in $\cC$ whose image in $\cC_{\infty}$ is equivalent to the original span. Thus, this span has an internal homotopy pushout consisting of an object $Q$, morphisms $i_A\colon A\to Q$, $i_B\colon B\to Q$, and a homotopy $H$ witnessing that $i_Af'\simeq i_Bg'$ such that $\uHom(Q,D) \to\Cocone(f',g',D)$ is a homotopy equivalence for any object $D$. By Proposition \ref{prop: localisation of internal homotopy pushout is homotopy pushout}, $Q$ is a pushout of $A \xleftarrow{\gamma (f')} C \xrightarrow{\gamma (g')} B$ in $\cC_{\infty}$ which is therefore a pushout of the original span.

\end{proof}

\section{Elementary $\infty$-Toposes}
\label{sect: ele infty toposes}

\begin{definition}
\label{def: elementary infinity topos}

An \textit{elementary $\infty$-topos} is an $\infty$-category $\cCi$ such that

\begin{enumerate}
    \item $\cCi$ has all finite limits.
    \item $\cCi$ is locally cartesian closed.
    \item $\cCi$ has enough universal morphisms, i.e. for every morphism $f$, there is a closed local class $S_p$ that includes $f$ and a univalent morphism $p \colon \tilde{U}_{S_p} \to U_{S_p}$ classifying $S_p$.
\end{enumerate}
\end{definition}

\begin{theorem}
\label{theorem: every elementary infinity topos has subobject classifiers}
    
    Every elementary $\infty$-topos $\cCi$ has a subobject classifier $\omega_p$ for every univalent morphism $p$ in $\cCi$, classifying the monomorphisms of $S_p$.
\end{theorem}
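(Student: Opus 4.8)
The plan is to recognise that this theorem is essentially a repackaging of the subobject-classifier results established in Section \ref{sect: univalence in infinity-categories}, now phrased in the vocabulary of elementary $\infty$-toposes rather than that of general finitely complete, locally cartesian closed $\infty$-categories. First I would unpack Definition \ref{def: elementary infinity topos}: by axioms (1) and (2) an elementary $\infty$-topos $\cCi$ is a finitely complete, locally cartesian closed $\infty$-category, and by axiom (3) every morphism lies in some closed local class $S_p$ with $p$ univalent. The two structural hypotheses needed to run the earlier machinery---finite completeness and local cartesian closedness---are thus built directly into the definition, and a univalent $p$ is supplied on demand.

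Next I would simply invoke Corollary \ref{cor: every infty cat finitely complete lcc and univalent has subobject classifier}, which asserts that every finitely complete, locally cartesian closed $\infty$-category carries a subobject classifier $\top \colon \mathsf{el}(\omega_p) \to \omega_p$ for each univalent morphism $p$. To confirm that $\omega_p$ classifies precisely the monomorphisms of $S_p$, I would trace the chain underlying that corollary: the $\infty$-categorical construction (following Construction \ref{constr: homotopy subobject classifier from univalent fibration}) builds $\top$ as the pullback of $p$ along $pr_p = \Pi_{p \times p}((id,id))$; Proposition \ref{lemma: pr_p is monomorphism} shows $pr_p$ is a monomorphism and Proposition \ref{proposition: top is univalent infty cat} shows $\top$ is univalent; Lemma \ref{lemma: monomorphisms of univalent morphisms are those of subobject classifier} identifies the bounded class $S_{\top}$ with the class $S_p^{\mathsf{mon}}$ of monomorphisms that are pullbacks of $p$; and Proposition \ref{prop: top is a subobject classifier for infty cat} then delivers the representation $\mathpzc{Map}_{\cCi}(x,\omega_p) \cong \mathsf{Sub}_{S_p}(x)$ for every object $x$.

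The point I would emphasise is that no genuinely new argument is required at this stage: all the technical content---the representability of the right fibration of equivalences, the monomorphism property of $pr_p$, and the transport of univalence along monomorphisms via Proposition \ref{prop: pullback of univalent along monomorphism in infty cat}---has already been discharged in the preceding subsections. Consequently there is no real obstacle here; the only thing to verify carefully is the bookkeeping, namely that the axioms of Definition \ref{def: elementary infinity topos} supply exactly the hypotheses (finite limits, local cartesian closedness, and a univalent $p$) demanded by Corollary \ref{cor: every infty cat finitely complete lcc and univalent has subobject classifier}. Since they do so by construction, the theorem follows immediately, and its role is purely to cast the earlier existence result into the definitional framework of elementary $\infty$-toposes.
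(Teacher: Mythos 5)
Your proposal is correct and follows exactly the paper's own argument: the paper's proof likewise observes that axioms (1)--(3) of Definition \ref{def: elementary infinity topos} supply precisely the hypotheses of Corollary \ref{cor: every infty cat finitely complete lcc and univalent has subobject classifier} and concludes immediately. Your additional tracing of the chain of lemmas behind that corollary is accurate but not needed beyond the one-line invocation.
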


\begin{proof}
    Since $\cC$ has finite limits, is locally cartesian closed and has enough universal morphisms, this follows from Corollary \ref{cor: every infty cat finitely complete lcc and univalent has subobject classifier}.
\end{proof}

\begin{theorem}
    \label{theorem: nice tribes present elementary infinity topoi}
    Let $\cC$ be a $\pi$-tribe with enough univalent fibrations. Let $\gamma \colon \cC \to \cC_{\infty}$ be the localisation functor. Then, $\cC_{\infty}$ is an elementary $\infty$-topos.

\end{theorem}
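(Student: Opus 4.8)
The plan is to verify the three defining conditions of Definition~\ref{def: elementary infinity topos} for $\cC_{\infty}$, each of which has already been essentially established earlier in the paper, so the proof is really a matter of assembling the right citations. First I would recall that $\cC$, being a $\pi$-tribe, is in particular a fibration category whose nerve is canonically an $\infty$-category of fibrant objects, so that Cisinski's machinery applies. Conditions (1) and (2) are then immediate: by \autocite[Prop. 7.5.6]{cisinski_higher_2019} the localisation $\cC_{\infty}$ is finitely complete, and by \autocite[Prop. 7.6.16]{cisinski_higher_2019} the fact that $\cC$ is a $\pi$-tribe forces $\cC_{\infty}$ to be locally cartesian closed.

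The substantive point is condition (3), that $\cC_{\infty}$ has enough univalent morphisms, but this is exactly the content of Corollary~\ref{thm: tribe enough universal families then localisation enough univalent morphisms}. So the core of the argument is simply to invoke that corollary, which guarantees that for every morphism $f$ in $\cC_{\infty}$ there is a univalent morphism $p$ with $f \in S_{\gamma(p)}$ and $S_{\gamma(p)}$ closed. Tracing through its proof, the key ingredients are: the right calculus of fractions \autocite[Cor. 7.2.18]{cisinski_higher_2019}, which lets us replace any morphism in $\cC_{\infty}$ up to equivalence by the image $\gamma(f')$ of a fibration $f'$ in $\cC$; the hypothesis that $\cC$ has enough univalent fibrations, which supplies a univalent fibration $p$ classifying $f'$ together with a closed local class $S_p$; and Theorem~\ref{theorem: univalence in a tribe implies univalence in infinity-cat}, which ensures $\gamma(p)$ is univalent. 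The closure of $S_{\gamma(p)}$ under composition and under the right adjoint to pullback follows from the corresponding closure of $S_p$ together with the compatibility of $\gamma$ with pullbacks and dependent products recorded in Proposition~\ref{prop:adjoint functors derive adjoint functors unit/counit}.

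Since every piece is already in place, I do not anticipate a genuine obstacle here; the only care needed is bookkeeping, namely confirming that the univalent morphism $\gamma(p)$ produced for $f'$ also classifies the equivalent morphism $f$ (they share a codomain, so this is automatic once $\gamma$ preserves the relevant pullbacks). Thus the proof reads as follows.

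\begin{proof}
    By \autocite[Prop. 7.5.6]{cisinski_higher_2019}, $\cC_{\infty}$ has all finite limits, and by \autocite[Prop. 7.6.16]{cisinski_higher_2019}, since $\cC$ is a $\pi$-tribe, $\cC_{\infty}$ is locally cartesian closed. By Corollary \ref{thm: tribe enough universal families then localisation enough univalent morphisms}, $\cC_{\infty}$ has enough univalent morphisms. Hence $\cC_{\infty}$ satisfies all three conditions of Definition \ref{def: elementary infinity topos} and is therefore an elementary $\infty$-topos.
\end{proof}
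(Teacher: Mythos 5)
Your proof is correct and is essentially identical to the paper's: both verify the three conditions of Definition \ref{def: elementary infinity topos} by citing Cisinski for finite limits and local cartesian closedness and Corollary \ref{thm: tribe enough universal families then localisation enough univalent morphisms} for enough univalent morphisms. (Your attribution of finite completeness to Prop. 7.5.6 and local cartesian closedness to Prop. 7.6.16 matches the preliminaries, whereas the paper's own proof of this theorem transposes the two citations.)
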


\begin{proof}
    $\cC_{\infty}$ has finite limits by \autocite[Prop. 7.6.16]{cisinski_higher_2019} and is locally cartesian closed by \autocite[Prop. 7.5.6]{cisinski_higher_2019}. By Corollary \ref{thm: tribe enough universal families then localisation enough univalent morphisms}, $\cC_{\infty}$ has enough univalent morphisms.
\end{proof}
    
\begin{theorem}
\label{thm: categorical models present infty toposes}
    Let $\cC$ be a categorical model of a dependent type theory $\T$ with $\Sigma, \Pi$ and $\mathtt{Id}$-structures, satisfying the $\Pi$-$\eta$ rule and function extensionality and with enough univalent universes, closed under all constructors. Then, $\cC_{\infty}$ is an elementary $\infty$-topos. In particular, the localisation of the syntactic category $\cC(\T)_{\infty}$ of such a type theory is an elementary $\infty$-topos. If moreover $\T$ satisfies propositional resizing, then in $\cC_{\infty}$ every monomorphism is the pullback of a single subobject classifier.
\end{theorem}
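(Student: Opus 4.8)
The plan is to assemble the results of the preceding sections. For the first two assertions I would begin by observing that $\cC$, carrying $\Sigma$, $\Pi$ and $\mathtt{Id}$-structure and satisfying function extensionality, is a $\pi$-tribe by the result of Kapulkin recalled in the preliminaries. Since $\cC$ moreover has enough univalent universes closed under all type constructors, Corollary \ref{cor: categorical model has enough univalent universes} shows that $\cC$ has enough univalent fibrations, i.e. is a univalent tribe. Theorem \ref{theorem: nice tribes present elementary infinity topoi} then yields directly that $\cC_{\infty}$ is an elementary $\infty$-topos. For the ``in particular'' clause I would note that the syntactic category $\cC(\T)$ is itself a categorical model of $\T$ and, as recorded after Definition \ref{def: propositional resizing in contextual category}, inherits enough univalent universes (and propositional resizing) from $\T$; since it also carries the $\Sigma$, $\Pi$, $\mathtt{Id}$-structures and function extensionality coming from the rules of $\T$, the first part applied to $\cC(\T)$ shows that $\cC(\T)_{\infty}$ is an elementary $\infty$-topos.

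For the final assertion, assume $\T$---and hence $\cC$---satisfies propositional resizing, and let $m$ be an arbitrary monomorphism in $\cC_{\infty}$. Using the right calculus of fractions exactly as in the proof of Corollary \ref{thm: tribe enough universal families then localisation enough univalent morphisms}, $m$ is equivalent to $\gamma(f')$ for a fibration $f'$ in $\cC$; since $\gamma$ inverts precisely the homotopy equivalences, the witnessing homotopy equivalence of Lemma \ref{lemma: different characterisations of homotopy monic fibration} may be reflected and $f'$ may be taken homotopy monic. By enough univalent universes, arranged in the cumulative hierarchy $\{U_n.\mathsf{El}_n\}$, the fibration $f'$ is a pullback of some universe projection $p_{\mathsf{El}_n}$. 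By Theorem \ref{prop: constructed top is homotopy subobject classifier} the homotopy monic $f'$ is then classified by the homotopy subobject classifier $\top_n \colon \mathsf{El}(\Omega_n) \to \Omega_n$, and by Proposition \ref{prop: infty localisation of tribe with subobject classifier shas subobject classifiers} its localisation $\gamma(\top_n)$ is a subobject classifier for $S_{\gamma(p_{\mathsf{El}_n})}$ in $\cC_{\infty}$; in particular $m$ is a pullback of $\gamma(\top_n)$.

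The collapse to a single classifier is where propositional resizing enters. Lemma \ref{lemma: resizing in model gives almost homotopy unique pullbacks} furnishes, for each $n$, a homotopy commutative square relating $\top_n$ and $\top_0$ whose horizontal maps are homotopy equivalences. Applying $\gamma$ turns these into an equivalence $\gamma(\top_n) \simeq \gamma(\top_0)$ in the arrow $\infty$-category $\mathpzc{Fun}(\Delta^1,\cC_{\infty})$, so the two morphisms have the same pullbacks and $m$ is therefore a pullback of $\gamma(\top_0)$. As $m$ was arbitrary, $S_{\gamma(\top_0)}$ exhausts all monomorphisms, whence Proposition \ref{prop: top is a subobject classifier for infty cat} upgrades the bounded statement to the global one: the single morphism $\gamma(\top_0)$ is a subobject classifier representing $\mathsf{Sub}(-)$ on all of $\cC_{\infty}$. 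The main obstacle I expect is precisely the bookkeeping in this last step: one must ensure that an arbitrary monomorphism really is classified by a universe in the fixed hierarchy $\{U_n\}$ (rather than by some ad hoc universe produced by ``enough universes''), so that Lemma \ref{lemma: resizing in model gives almost homotopy unique pullbacks} applies, and that the individual resizing equivalences are coherent enough to identify all the $\gamma(\top_n)$ with one object of $\mathpzc{Fun}(\Delta^1,\cC_{\infty})$.
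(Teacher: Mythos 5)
Your proposal is correct and follows essentially the same route as the paper: the first part is the identical assembly of Corollary \ref{cor: categorical model has enough univalent universes} and Theorem \ref{theorem: nice tribes present elementary infinity topoi}, and the resizing part likewise passes through Lemma \ref{lemma: resizing in model gives almost homotopy unique pullbacks}, whose homotopy commutative square becomes an invertible-horizontal (hence pullback) square in $\cC_{\infty}$, followed by pullback pasting. The bookkeeping worry you flag — that an arbitrary monomorphism must be classified by a universe lying in the fixed hierarchy $\{U_n\}$ so that the resizing lemma applies — is real but is equally left implicit in the paper's own (terser) proof, which simply restricts attention to monomorphisms classified by some $\gamma(p_n)$.
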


\begin{proof}
    We know that $\cC$ is a $\pi$-tribe and by Corollary \ref{cor: categorical model has enough univalent universes}, $\cC$ has enough univalent fibrations, so the claim follows from Theorem \ref{theorem: nice tribes present elementary infinity topoi}.

    Suppose now that $\T$ satisfies propositional resizing. For any subobject classifier $\mathsf{El}(\Omega_n) \to \Omega_n$, the homotopy commutative square of Lemma \ref{lemma: resizing in model gives almost homotopy unique pullbacks} will become a commutative square in $\cC_{\infty}$ in which the top and bottom morphisms are invertible. Thus, it will become in particular a pullback square. Then, by pullback pasting any monomorphism classified by some univalent morphism of the form $\gamma(p_n)$ with the corresponding subobject classifier $\Omega_n$ is also classified by $\Omega_0$. 
\end{proof}

\begin{theorem}
\label{thm: pushout types give finite colimits}
    If $\cC$ is a categorical model of dependent type theory with homotopy pushouts and 0-types, then $\cC_{\infty}$ has all finite colimits.
\end{theorem}

\begin{proof}
    This follows from Lemma \ref{lemma: categorical model has all internal homotopy pushouts} and Theorem \ref{thm: localisation of pi-tribe with internal homotopy initial object and internal homotopy pushouts is cocomplete}.
\end{proof}

\printbibliography

\end{document}